\DeclareMathOperator{\car}{char}
\DeclareMathOperator{\Pic}{Pic}
\DeclareMathOperator{\Orb}{Orb}
\DeclareMathOperator{\CritVal}{CritVal}
\DeclareMathOperator{\disc}{disc}
\DeclareMathOperator{\gen}{genus}
\DeclareMathOperator{\PrePer}{PrePer}
\newcommand{\mysetminus}{\mathbin{\fgebackslash}}
\newtheorem{theorem}{Theorem}[section]
\newtheorem{lemma}[theorem]{Lemma}
\newtheorem{proposition}[theorem]{Proposition}
\newtheorem{proposition-definition}[theorem]{Proposition-Definition}
\newtheorem{corollary}[theorem]{Corollary}
\newtheorem{conjecture}[theorem]{Conjecture}
\newtheorem{question}[theorem]{Question} 
\theoremstyle{definition}
\newtheorem*{definition}{Definition}
\theoremstyle{remark}
\newtheorem{remark}[theorem]{Remark}
\newtheorem{example}[theorem]{Example}
\renewcommand*\env@matrix[1][*\c@MaxMatrixCols c]{%
  \hskip -\arraycolsep
  \let\@ifnextchar\new@ifnextchar
  \array{#1}}
\theoremstyle{remark}
\newcommand*{\Scale}[2][4]{\scalebox{#1}{$#2$}}%
\newcommand{\Mod}[1]{\ (\textup{mod}\ #1)}     
\newcommand{\sep}{\operatorname{sep}} 
\title[Stochastic Canonical Heights]{Stochastic Canonical Heights} 
\author[Vivian O. Healey]{Vivian Olsiewski Healey}
\address{Department of Mathematics, University of Chicago, 5734 S. University Ave., Chicago, IL 60637.}
\email{vhealey@uchicago.edu} 
\author[Wade Hindes]{Wade Hindes}
\address{Department of Mathematics, The Graduate Center, City University of New York (CUNY); 365 Fifth Avenue, New York, NY 10016, USA.}
\email{whindes@gc.cuny.edu}
\begin{document}
\maketitle
\renewcommand{\thefootnote}{}
\footnote{2010 \emph{Mathematics Subject Classification}: Primary: 11G50, 37P15. Secondary: 14G05.\\ The first author was partially supported by NSF grant DMS-1246999.}
\begin{abstract} We construct height functions defined stochastically on projective varieties equipped with endomorphisms, and we prove that these functions satisfy analogs of the usual properties of canonical heights. Moreover, we give a dynamical interpretation of the kernel of these stochastic height functions, and in the case of the projective line, we relate the size of this kernel to the Julia sets of the original maps. Finally, as an application, we establish the finiteness of some generalized Zsigmondy sets over global fields.    
\end{abstract} 
\section{Introduction} 
The canonical height \cite{Call-Silverman} associated to a smooth projective variety $V$ equipped with an endomorphism $\phi:V\rightarrow V$ is an indispensable tool for studying the arithmetic properties of the corresponding discrete dynamical system $(V,\phi)$. However, many varieties of interest in number theory (e.g. projective space) possess many such maps, and given that composition is not commutative in general, the dynamical systems generated by a set of maps can differ greatly from the dynamics of a single map. In this paper, we address this problem and construct a new height function that measures the collective action of a set of maps on a fixed variety. 

To accurately characterize the dynamics of a set of maps, one must encode ``how often" to expect a particular map to appear at any stage of composition - a slight alteration in the likelihood of applying a particular map can drastically change the overall dynamics. Moreover, there is no intrinsic reason why all maps should be given equal weight. We therefore use the language and tools of probability in our constructions. In so doing, we are in essence analyzing a sort of random walk on the variety (formally a Markov chain), where the stochastic motion is generated by random evaluation of maps in some fixed set.   
\begin{example}{\label{eg:twomapsfaircoin}} To make this clear, the reader is encouraged to keep in mind the \emph{two maps and a fair coin} example: suppose that we have two maps $S=\{\phi_1,\phi_2\}$ on a variety $\phi_i: V\rightarrow V$. Then for any given point $P\in V$, we can flip a coin to determine whether to evaluate $\phi_1$ or $\phi_2$ at $P$, assigning each outcome an equal probability of $1/2$. Now repeat this process for the new point $\phi_1(P)$ or $\phi_2(P)$ and continue inductively in this way, associating to an infinite sequence of coin flips (a type) of orbit of $P$.        
\end{example}       
If $V$ is equipped with a height function, then we can ask how the height of $P$ grows as we move along a particular path. Even more broadly, we can ask about the height growth distribution as we vary over all possible paths (each path weighted by its probability). To make this idea precise, we fix some notation.  Let $K$ be a global field, let $V_{/K}$ be a smooth projective variety over $K$, and let $S$ be a (finite or infinite) set of endomorphisms on $V$ defined over $K$. To define the orbits we will consider, let 
\[\text{$\Phi_{S,n}=\prod_{i=1}^nS$\;\; and\;\; $\Phi_S=\prod_{i=1}^\infty S$}\] be the set of $n$-term (and infinite) sequences of elements of $S$ respectively. Given an infinite sequence $\gamma=(\theta_n)_{n\geq1}\in \Phi_S$ and a positive integer $m\geq1$, we let $\gamma_m=(\theta_i)_{i=1}^m\in\Phi_{S,m}$ and define an action of $\gamma_m$ on $V$ by 
\[\text{$\gamma_m\cdot P=(\theta_m\circ\theta_{m-1}\circ\dots \circ\theta_1)(P)$\hspace{.5cm}for $P\in V$.}\] 
In this way, we define the \emph{orbit} of a point $P\in V$ with respect to a sequence $\gamma\in\Phi_S$ to be: 
\[\Orb_\gamma(P)=\{\gamma_m\cdot P: m\geq1\}.\] 
Finally, if $\nu_1$ is a probability measure on $S$, then we define a probability measure $\nu_m$ on $\Phi_{S,m}$ by the product
\[ \text{$\nu_m(\gamma_m)=\prod_{i=1}^m\nu_1(\theta_i)$,\hspace{.5cm} for $\gamma_m=(\theta_i)_{i=1}^m$.} \] 
That is, each $\gamma_m\in\Phi_{S,m}$ is a sequence of $m$ elements of $S$, and each component of $\gamma_m$ is chosen independently according to $\nu_1$. Likewise, $\nu_1$ induces a probability measure $\nu$ on  the set of infinite sequences $\Phi_S$; see \cite[Theorem 10.4]{ProbabilityText}. We call $(\Phi_S,\mathcal{F},\nu)$ the \emph{probability space of i.i.d sequences} of elements of $S$ distributed according to $\nu_1$; here $\mathcal{F}$ is the $\sigma$-algebra of $\nu$-measurable subsets of $\Phi_S$. 
       
Now for a brief discussion of the relevant material on canonical heights. Let $\eta\in\Pic(V)\otimes\mathbb{R}$ be a divisor class and let $h_{V,\eta}: V(\overline{K})\rightarrow\mathbb{R}$ be a corresponding Weil height function; see, for instance, \cite[\S2-\S4]{Lang}. To define the canonical height for a fixed map $\phi: V\rightarrow V$, one requires that $\eta$ is an eigenclass for $\phi$; the key point in this case is that 
\begin{equation}\label{functoriality}
h_{V,\eta}\circ\phi=\alpha_\phi h_{V,\eta}+O_{V,\eta,\phi}(1)
\end{equation}  
for some $\alpha_\phi\in\mathbb{R}$. With this in mind, we let \[C(V,\eta,\phi):=\sup_{P \in V} \Big\vert h_{V,\eta}(\phi(P))-\alpha_\phi h_{V,\eta}(P)\Big\vert\]
be the smallest constant needed for the bound in (\ref{functoriality}). Then, in order to generalize the construction of canonical heights for a single map to a collection of maps (equivalently, from constant sequences to arbitrary sequences), we define the following fundamental notion.  
\begin{definition} 
A set of endomorphisms $S$ on a projective variety $V$ is \textbf{\emph{height controlled}} with respect to a divisor class $\eta\in\Pic(V)\otimes\mathbb{R}$ if: 
\begin{enumerate}[topsep=8pt, partopsep=5pt, itemsep=7pt]  
\item[\textup{(1)}] For all $\phi\in S$, there exists $\alpha_\phi$ such that: $\phi^*(\eta)=\alpha_\phi\eta$\, and\, $\displaystyle{\inf_{\phi\in S}\,\alpha_\phi>1}$. 
\item[\textup{(2)}] The corresponding height constants are bounded: $\displaystyle{\sup_{\phi\in S}\,C(V,\eta,\phi)}$ is finite.\
\end{enumerate}
\end{definition} 
These properties are easily satisfied for any projective space and any finite set $S$; however, see Example \ref{unicrit} and Remark \ref{htcont} for instances of infinite $S$. We now state our main construction. In what follows, for a finite sequence $\gamma_m=(\theta_i)_{i=1}^m\in\Phi_{S,m}$, we define the degree $\deg_\eta(\gamma_m)=\prod_{i=1}^m \alpha_{\theta_i}$, in what we hope is a pardonable (and instructive) abuse of notation. \vspace{.1cm}       
\begin{theorem}\label{thm:htsiid} Let $K$ be a global field, let $V_{/K}$ be a smooth projective variety over $K$, and let $S$ be a collection of endomorphisms on $V$ equipped with the following:  
\begin{enumerate}[topsep=5pt, partopsep=5pt, itemsep=7pt] 
\item[\textup{(1)}] A common eigendivisor class $\eta\in \Pic(V)\otimes\mathbb{R}$ such that $S$ is height controlled with respect to $\eta$. 
\item[\textup{(2)}] A probability measure $\nu_1$ on $S$. 
\end{enumerate} 
Let $(\Phi_S,\mathcal{F},\nu)$ be the probability space of i.i.d sequences of elements of $S$ distributed according to $\nu_1$, and let $h_{V,\eta}$ be a Weil height function corresponding to $\eta$. Then for all infinite sequences $\gamma\in\Phi_S$ and all points $P\in V(\overline{K})$, the \emph{canonical height},  \vspace{.1cm}       
\[\hat{h}_{V,\eta,P}(\gamma):=\lim_{n\rightarrow\infty}\frac{h_{V,\eta}(\gamma_n\cdot P)}{\deg_\eta(\gamma_n)},\vspace{.1cm}\]
converges. Likewise, the \emph{expected canonical height at $P$} for a random $\gamma$,  \vspace{.1cm} 
\[\mathbb{E}_{\nu}\big[\hat{h}_{V,\eta}\big](P):=\int_{\Phi_S}\hat{h}_{V,\eta,P}(\gamma)\,d\nu,\vspace{.05cm}\]   
converges. Let $d_{\nu,\eta}$ be the (deterministic) constant given by 
\[d_{\nu,\eta}:=\bigg(\sum_{\phi\in S}\frac{\nu_1(\phi)}{\deg_{\eta}(\phi)}\bigg)^{\hspace{-.1cm}-1},\] and let $\nu_k^*$ be the new probability measure on $\Phi_{S,k}$ induced by the pair $(\nu_1,\eta)$ and given by 
\[\nu_k^*(\gamma_k):=\frac{\nu_k(\gamma_k)}{\deg_\eta(\gamma_k)}(d_{\nu,\eta})^{k}.\]  
Then the function $\mathbb{E}_\nu\big[\hat{h}_{V,\eta}\big]: V(\overline{K})\rightarrow\mathbb{R}$ satisfies the following properties: 
\begin{enumerate}[topsep=6pt, partopsep=6pt, itemsep=8pt] 
\item[\textup{(a)}] $\mathbb{E}_\nu\big[\hat{h}_{V,\eta}\big]= h_{V,\eta}+O(1)$. 
\item[\textup{(b)}] $\mathbb{E}_{\nu_k^*}\Big[\mathbb{E}_{\nu}\big[\hat{h}_{V,\eta}\big](\gamma_k\cdot P)\Big]=(d_{\nu,\eta})^{k}\;\mathbb{E}_{\nu}\big[\hat{h}_{V,\eta}\big](P)$ for all $k\geq1$ and all $P\in V(\overline{K})$.\vspace{.12cm} 
\end{enumerate}
\end{theorem}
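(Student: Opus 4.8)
\emph{Plan.} The plan is to run a Tate-style telescoping argument to get convergence of $\hat h_{V,\eta,P}(\gamma)$ together with a bound that is \emph{uniform in $\gamma$}; this uniform bound immediately yields existence of $\mathbb{E}_\nu[\hat h_{V,\eta}](P)$ and property (a). Property (b) will then follow from a ``cocycle'' identity relating $\hat h$ at $\gamma_k\cdot P$ to $\hat h$ at $P$ along a prepended sequence, combined with the self-similarity $\nu=\nu_k\otimes\nu$ of the i.i.d. measure and the precise normalization built into $\nu_k^*$ and $d_{\nu,\eta}$. Throughout write $\alpha:=\inf_{\phi\in S}\alpha_\phi>1$ and $C:=\sup_{\phi\in S}C(V,\eta,\phi)<\infty$; both are finite by the height-controlled hypothesis. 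For $\gamma=(\theta_n)_{n\ge1}\in\Phi_S$, applying \eqref{functoriality} with $\phi=\theta_{n+1}$ at the point $\gamma_n\cdot P$ gives $\bigl|h_{V,\eta}(\gamma_{n+1}\cdot P)-\alpha_{\theta_{n+1}}h_{V,\eta}(\gamma_n\cdot P)\bigr|\le C$, and dividing by $\deg_\eta(\gamma_{n+1})=\alpha_{\theta_{n+1}}\deg_\eta(\gamma_n)$ yields
\[\Bigl|\tfrac{h_{V,\eta}(\gamma_{n+1}\cdot P)}{\deg_\eta(\gamma_{n+1})}-\tfrac{h_{V,\eta}(\gamma_n\cdot P)}{\deg_\eta(\gamma_n)}\Bigr|\le\frac{C}{\deg_\eta(\gamma_{n+1})}\le\frac{C}{\alpha^{n+1}}.\]
Hence the sequence defining $\hat h_{V,\eta,P}(\gamma)$ is Cauchy and converges, and summing over $n\ge0$ (with the conventions $\gamma_0\cdot P:=P$, $\deg_\eta(\gamma_0):=1$) gives the uniform estimate $\bigl|\hat h_{V,\eta,P}(\gamma)-h_{V,\eta}(P)\bigr|\le C/(\alpha-1)$ for \emph{every} $\gamma\in\Phi_S$ and every $P$. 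Since $\hat h_{V,\eta,P}$ is a uniform limit of functions each depending on finitely many coordinates of $\gamma$, it is $\mathcal F$-measurable; being bounded on the probability space $(\Phi_S,\mathcal F,\nu)$ it is integrable, so $\mathbb{E}_\nu[\hat h_{V,\eta}](P)$ exists, and integrating the uniform estimate against the probability measure $\nu$ gives property (a).

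\emph{A cocycle identity.} For $\gamma_k=(\theta_i)_{i=1}^k\in\Phi_{S,k}$ and $\gamma=(\theta_j')_{j\ge1}\in\Phi_S$, let $\gamma_k\ast\gamma:=(\theta_1,\dots,\theta_k,\theta_1',\theta_2',\dots)\in\Phi_S$ be the concatenation. Unwinding the definitions, $(\gamma_k\ast\gamma)_{n+k}\cdot P=\gamma_n\cdot(\gamma_k\cdot P)$ and $\deg_\eta\bigl((\gamma_k\ast\gamma)_{n+k}\bigr)=\deg_\eta(\gamma_k)\deg_\eta(\gamma_n)$ for all $n\ge0$. Dividing and letting $n\to\infty$ gives $\hat h_{V,\eta,\,\gamma_k\cdot P}(\gamma)=\deg_\eta(\gamma_k)\,\hat h_{V,\eta,P}(\gamma_k\ast\gamma)$, and integrating over $\gamma$ against $\nu$,
\[\mathbb{E}_\nu[\hat h_{V,\eta}](\gamma_k\cdot P)=\deg_\eta(\gamma_k)\int_{\Phi_S}\hat h_{V,\eta,P}(\gamma_k\ast\gamma)\,d\nu(\gamma).\]

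\emph{Property (b).} First, $\nu_k^*$ is genuinely a probability measure on $\Phi_{S,k}$: since $\deg_\eta(\phi)=\alpha_\phi$ and $\nu_k$ is a product measure, $\sum_{\gamma_k}\nu_k^*(\gamma_k)=(d_{\nu,\eta})^k\bigl(\sum_{\phi\in S}\nu_1(\phi)/\alpha_\phi\bigr)^k=1$ by the definition of $d_{\nu,\eta}$. Now multiply the displayed identity by $\nu_k^*(\gamma_k)$ and sum over $\gamma_k\in\Phi_{S,k}$; using $\nu_k^*(\gamma_k)\deg_\eta(\gamma_k)=(d_{\nu,\eta})^k\,\nu_k(\gamma_k)$ we obtain
\[\mathbb{E}_{\nu_k^*}\Bigl[\mathbb{E}_\nu[\hat h_{V,\eta}](\gamma_k\cdot P)\Bigr]=(d_{\nu,\eta})^k\sum_{\gamma_k\in\Phi_{S,k}}\nu_k(\gamma_k)\int_{\Phi_S}\hat h_{V,\eta,P}(\gamma_k\ast\gamma)\,d\nu(\gamma),\]
where interchanging the sum (an integral against $\nu_1$ if $\nu_1$ is not discrete) with the integral over $\Phi_S$ is legitimate by the uniform bound of the first step. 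Finally, under the canonical identification $\Phi_S\cong\Phi_{S,k}\times\Phi_S$ sending an infinite sequence to the pair (its first $k$ terms, the sequence of its remaining terms), the measure $\nu$ corresponds to $\nu_k\otimes\nu$; equivalently, for any bounded measurable $F$ on $\Phi_S$ one has $\sum_{\gamma_k}\nu_k(\gamma_k)\int_{\Phi_S}F(\gamma_k\ast\gamma)\,d\nu(\gamma)=\int_{\Phi_S}F\,d\nu$. Applying this with $F=\hat h_{V,\eta,P}$ collapses the right-hand side to $(d_{\nu,\eta})^k\,\mathbb{E}_\nu[\hat h_{V,\eta}](P)$, which is (b).

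\emph{Main obstacle.} There is no deep obstacle beyond careful bookkeeping. The crux is recognizing the cocycle identity $\hat h_{V,\eta,\gamma_k\cdot P}(\gamma)=\deg_\eta(\gamma_k)\hat h_{V,\eta,P}(\gamma_k\ast\gamma)$ and pairing it with the self-similarity $\nu=\nu_k\otimes\nu$, then checking that the twisted weights are normalized so that exactly $\nu_k^*(\gamma_k)\deg_\eta(\gamma_k)=(d_{\nu,\eta})^k\nu_k(\gamma_k)$; the uniform telescoping estimate is what legitimizes every limit and every Fubini-type interchange along the way.
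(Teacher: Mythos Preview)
Your proof is correct and follows essentially the same approach as the paper: a Tate telescoping estimate giving convergence with a bound uniform in $\gamma$, then dominated convergence for (a), and for (b) the product structure $\nu=\nu_k\otimes\nu$ combined with the relation between $\hat h$ at $\gamma_k\cdot P$ and at $P$. The only cosmetic difference is that you isolate the cocycle identity $\hat h_{V,\eta,\gamma_k\cdot P}(\gamma)=\deg_\eta(\gamma_k)\,\hat h_{V,\eta,P}(\gamma_k\ast\gamma)$ and argue directly with the limiting heights, whereas the paper unwinds the inner expectation back to the finite-stage approximations $h_{V,\eta,P,n+m}$ before reassembling; the underlying Fubini/self-similarity step is the same in both.
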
 
\begin{remark} The canonical heights $\hat{h}_{V,\eta,P}(\gamma)$, also denoted $\hat{h}_{V,\eta,\gamma}(P)$ depending on whether we vary the path $\gamma\in\Phi_S$ or the basepoint $P\in V$, were also studied in \cite{Kawaguchi}, and Theorem \ref{thm:htsiid} can be viewed as a generalization of \cite[Proposition C]{Kawaguchi} in two ways: we allow the generating set of functions $S$ to be infinite (under suitable conditions), and we allow arbitrary probability measures on $S$.    
\end{remark}
There are several reasons why we believe that the expected canonical height $\mathbb{E}_\nu\big[\hat{h}_{V,\eta}\big]$ is the right height function to study the collective dynamics of the maps in $S$ (by analogy with the standard canonical height of Call-Silverman). The first reason is that $\mathbb{E}_\nu\big[\hat{h}_{V,\eta}\big]=\hat{h}_{V,\eta,\phi}$ whenever $S=\{\phi\}$ is a singleton with trivial probability measure. The second reason is that $\mathbb{E}_\nu\big[\hat{h}_{V,\eta}\big]$ satisfies a transformation law of similar shape to that of the standard canonical height. The third reason is that $\mathbb{E}_\nu\big[\hat{h}_{V,\eta}\big]$ detects finite $S$-invariant subsets of $V$, an analog of preperiodic points of a fixed map. In what follows, we say that a subset $F\subset V$ is $S$-stable if $\phi(F)\subseteq F$ for all $\phi\in S$. Moreover, we say that the probability measure $\nu_1$ is \emph{strictly positive} if $\nu_1(\phi)>0$ for all $\phi\in S$.      
\begin{corollary}{\label{cor:hteq0}} Let $(V,\eta, S,\nu_1)$ satisfy the conditions of Theorem \ref{thm:htsiid}. If $\eta$ is an ample divisor and $\nu_1$ is strictly positive, then for all $P\in V(\overline{K})$ the following are equivalent: 
\begin{enumerate}[topsep=8pt, partopsep=8pt, itemsep=8pt] 
\item[\textup{(1)}] There is a finite, $S$-stable subset $F_P\subset V$ containing $P$. 
\item[\textup{(2)}] $\nu\big(\{\gamma\in\Phi_S:\, \Orb_\gamma(P)\; \text{is finite}\;\}\big)=1$.  
\item[\textup{(3)}] $\mathbb{E}_\nu[\hat{h}_{V,\eta}](P)=0$. 
\end{enumerate} 
\end{corollary}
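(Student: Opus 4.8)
The plan is to establish the cycle of implications $(1)\Rightarrow(2)\Rightarrow(3)\Rightarrow(1)$, using Theorem~\ref{thm:htsiid} as the main engine and exploiting ampleness and strict positivity only where genuinely needed.

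For $(1)\Rightarrow(2)$: if $F_P$ is finite and $S$-stable and contains $P$, then for \emph{every} sequence $\gamma\in\Phi_S$ we have $\gamma_m\cdot P\in F_P$ for all $m$, so $\Orb_\gamma(P)\subseteq F_P$ is finite. Thus the set in $(2)$ is all of $\Phi_S$, and in particular has $\nu$-measure $1$; this implication needs neither ampleness nor strict positivity.

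For $(2)\Rightarrow(3)$: first observe that for a single sequence $\gamma$ with $\Orb_\gamma(P)$ finite, the quantity $h_{V,\eta}(\gamma_n\cdot P)$ takes only finitely many values while $\deg_\eta(\gamma_n)=\prod_{i=1}^n\alpha_{\theta_i}\geq \big(\inf_\phi\alpha_\phi\big)^n\to\infty$ by height-controlledness~(1); hence $\hat h_{V,\eta,P}(\gamma)=0$ for every such $\gamma$. By hypothesis $(2)$ this holds for $\nu$-almost every $\gamma$, so integrating over $\Phi_S$ gives $\mathbb{E}_\nu[\hat h_{V,\eta}](P)=0$. Again, strict positivity is not needed here, but we do use the uniform lower bound $\inf_\phi\alpha_\phi>1$ from the definition of height controlled.

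For $(3)\Rightarrow(1)$: this is where ampleness and strict positivity enter, and I expect it to be the main obstacle. The idea is to run a Northcott-type argument on the orbit $\bigcup_{\gamma}\Orb_\gamma(P)$. Suppose $\mathbb{E}_\nu[\hat h_{V,\eta}](P)=0$. Using property~(b) of Theorem~\ref{thm:htsiid} with $k=1$, namely $\mathbb{E}_{\nu_1^*}\!\big[\mathbb{E}_\nu[\hat h_{V,\eta}](\phi\cdot P)\big]=d_{\nu,\eta}\,\mathbb{E}_\nu[\hat h_{V,\eta}](P)=0$, together with the fact that $\mathbb{E}_\nu[\hat h_{V,\eta}]\geq 0$ (which must first be checked — it follows because for ample $\eta$ one may choose $h_{V,\eta}$ bounded below, whence each $\hat h_{V,\eta,P}(\gamma)\geq 0$, and one integrates), and the strict positivity of $\nu_1$ (so $\nu_1^*(\phi)>0$ for every $\phi\in S$), we deduce $\mathbb{E}_\nu[\hat h_{V,\eta}](\phi(P))=0$ for all $\phi\in S$. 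Iterating, $\mathbb{E}_\nu[\hat h_{V,\eta}]$ vanishes on the entire forward orbit set $\mathcal{O}:=\{\gamma_m\cdot P: m\geq 0,\ \gamma\in\Phi_S\}$, which is $S$-stable by construction. By property~(a), $h_{V,\eta}=\mathbb{E}_\nu[\hat h_{V,\eta}]+O(1)$ is therefore uniformly bounded on $\mathcal{O}$. Since $\eta$ is ample, the Weil height $h_{V,\eta}$ satisfies the Northcott finiteness property on $V(\overline K)$ — but to conclude finiteness of $\mathcal{O}$ I must also bound the degrees of the points in $\mathcal{O}$ over $K$; this is automatic because $P\in V(\overline K)$ lies in some $V(L)$ with $[L:K]<\infty$ and every $\phi\in S$ is defined over $K$, so $\mathcal{O}\subseteq V(L)$. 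Hence $\mathcal{O}$ is a finite, $S$-stable subset of $V$ containing $P$, and we take $F_P=\mathcal{O}$.

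The delicate points to get right are: (i) verifying $\mathbb{E}_\nu[\hat h_{V,\eta}]\geq 0$ from ampleness, which legitimizes the ``average zero forces pointwise zero'' step; (ii) checking that property~(b) can indeed be applied pointwise along the orbit — i.e.\ that $\nu_1^*$ is genuinely supported on all of $S$, which is exactly where strict positivity of $\nu_1$ is used; and (iii) the bounded-height-plus-bounded-degree Northcott argument, where one must note that all iterates stay in a fixed number field $L$ since $S$ consists of $K$-morphisms. None of these is individually hard, but assembling them so that ampleness and strict positivity are invoked at precisely the right places is the crux.
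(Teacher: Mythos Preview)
Your proposal is correct and follows essentially the same route as the paper: the cycle $(1)\Rightarrow(2)\Rightarrow(3)\Rightarrow(1)$, with the substantive step $(3)\Rightarrow(1)$ handled by combining the transformation law (Theorem~\ref{thm:htsiid}(b)), nonnegativity of $\hat h_{V,\eta,P}$ for ample $\eta$, strict positivity of $\nu_1^*$, and Northcott over the fixed field $K(P)$. The only cosmetic differences are that the paper applies property~(b) for general $k$ at once (rather than iterating the $k=1$ case) and cites \cite[Theorem~2.3(3)]{Kawaguchi} for the nonnegativity $\hat h_{V,\eta,P}(\gamma)\geq 0$, which you instead derive directly from a lower bound on $h_{V,\eta}$.
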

With this characterization of the kernel of $\mathbb{E}_\nu[\hat{h}_{V,\eta}]$ in place, one expects that few points (perhaps at most finitely many) have expected height zero unless the maps in $S$ are dynamically dependent in some way. As an example of this heuristic, we note the following application of \cite[Theorem 1.2]{Baker-DeMarco} and Corollary \ref{cor:hteq0} for the projective line. In what follows, given a rational function $\phi\in\overline{\mathbb{Q}}(z)$, we let $\PrePer(\phi)$ denote the set of preperiodic points of $\phi$ in $\mathbb{P}^1(\overline{\mathbb{Q}})$. 
\begin{corollary}\label{cor:julia} Let $S$ be a collection of height controlled rational maps on $\mathbb{P}^1$ defined over a number field $K$, let $\nu_1$ be a strictly positive probability measure on $S$, and let $h$ be the absolute Weil height function on $\mathbb{P}^1(\overline{\mathbb{Q}})$. Then the following are equivalent: 
\begin{enumerate}[topsep=8pt, partopsep=8pt, itemsep=8pt] 
\item[\textup{(1)}] $\big\{P\in\mathbb{P}^1(\overline{\mathbb{Q}})\,:\,\mathbb{E}_\nu[\hat{h}](P)=0\big\}$ is infinite. 
\item[\textup{(2)}] $\displaystyle{\bigcap_{\phi\in S}\PrePer(\phi)}$ is infinite.     
\item[\textup{(3)}] $\PrePer(\phi)=\PrePer(\psi)$ for all $\phi, \psi\in S$. 
\end{enumerate}  
In particular, if the set of points of expected height zero is infinite, then the Julia sets of all of the maps in $S$ coincide.   
\end{corollary}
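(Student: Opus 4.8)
The plan is to establish the cycle of implications $(1)\Rightarrow(2)\Rightarrow(3)\Rightarrow(1)$, using Corollary \ref{cor:hteq0} to translate the height-theoretic statement $(1)$ into the dynamical statement that a point lies in a finite $S$-stable set, and using \cite[Theorem 1.2]{Baker-DeMarco} to compare the maps pairwise. Throughout, on $\mathbb{P}^1$ I would take $\eta$ to be the class of a point (the hyperplane class), which is ample, so that $h_{\mathbb{P}^1,\eta}=h$, $\alpha_\phi=\deg\phi$, and ``height controlled'' forces $\deg\phi\ge 2$ for every $\phi\in S$; thus the hypotheses of both Corollary \ref{cor:hteq0} and \cite[Theorem 1.2]{Baker-DeMarco} are available.

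First, $(1)\Rightarrow(2)$. If $F\subset\mathbb{P}^1(\overline{\mathbb{Q}})$ is finite and $S$-stable, then for each $\phi\in S$ and each $Q\in F$ the forward $\phi$-orbit of $Q$ is contained in the finite set $F$, so $Q\in\PrePer(\phi)$; hence every finite $S$-stable set is contained in $\bigcap_{\phi\in S}\PrePer(\phi)$. By Corollary \ref{cor:hteq0}, each point of expected height zero lies in such a set, so the (assumed infinite) zero set of $\mathbb{E}_\nu[\hat h]$ is contained in $\bigcap_{\phi\in S}\PrePer(\phi)$, which is therefore infinite. Next, $(2)\Rightarrow(3)$: for any $\phi,\psi\in S$ one has $\PrePer(\phi)\cap\PrePer(\psi)\supseteq\bigcap_{\chi\in S}\PrePer(\chi)$, which is infinite by hypothesis, and since $\deg\phi,\deg\psi\ge 2$, \cite[Theorem 1.2]{Baker-DeMarco} gives $\PrePer(\phi)=\PrePer(\psi)$.

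The heart of the matter is $(3)\Rightarrow(1)$. Assuming $\PrePer(\phi)=\PrePer(\psi)=:\mathcal{P}$ for all $\phi,\psi\in S$, the set $\mathcal{P}$ is $S$-stable, since for $\psi\in S$ and $P\in\mathcal{P}=\PrePer(\psi)$ we have $\psi(P)\in\PrePer(\psi)=\mathcal{P}$. Now fix $P\in\mathcal{P}$ and consider its orbit $O_\Sigma(P)=\bigcup_{m\ge 0}\{\gamma_m\cdot P:\gamma_m\in\Phi_{S,m}\}$ under the semigroup generated by $S$; although $\mathcal{P}$ itself is infinite, I claim $O_\Sigma(P)$ is finite. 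Indeed, by $S$-stability $O_\Sigma(P)\subseteq\mathcal{P}=\PrePer(\phi_0)$ for any fixed $\phi_0\in S$, so $h$ is bounded on $O_\Sigma(P)$ (because the canonical height of $\phi_0$ vanishes on $\PrePer(\phi_0)$ and equals $h+O(1)$), while the fact that every map in $S$ is defined over $K$ forces $O_\Sigma(P)\subseteq K(P)$, a set of bounded degree over $K$; Northcott's theorem then gives finiteness. Hence $O_\Sigma(P)$ is a finite $S$-stable set containing $P$, so Corollary \ref{cor:hteq0} yields $\mathbb{E}_\nu[\hat h](P)=0$, and therefore $\mathcal{P}$ lies in the zero set of $\mathbb{E}_\nu[\hat h]$. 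Since $\mathcal{P}=\PrePer(\phi_0)$ is infinite (a rational map of degree $\ge 2$ has infinitely many periodic points), $(1)$ follows. Finally, for the closing sentence: infinitude of the zero set gives $(3)$, and for maps of degree $\ge 2$ the equality $\PrePer(\phi)=\PrePer(\psi)$ forces equality of the associated canonical measures by \cite[Theorem 1.2]{Baker-DeMarco}, hence equality of their supports, i.e.\ $J(\phi)=J(\psi)$ for all $\phi,\psi\in S$. The step I expect to require the most care is the finiteness of $O_\Sigma(P)$: the point is that $S$-stability of $\mathcal{P}$ controls the heights of the entire orbit through a single map $\phi_0$, while iterating $K$-rational maps confines the orbit to one number field, so that Northcott applies despite $\mathcal{P}$ being infinite.
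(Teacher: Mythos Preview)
Your proof is correct and follows essentially the same route as the paper's: the same cycle $(1)\Rightarrow(2)\Rightarrow(3)\Rightarrow(1)$, with Corollary \ref{cor:hteq0} translating between expected height zero and finite $S$-stable sets, Baker--DeMarco handling $(2)\Rightarrow(3)$, and the key finiteness of the full $S$-orbit $O_\Sigma(P)$ (your $O_\Sigma(P)$ is the paper's $F_P$) obtained via Northcott from the bounded-height inclusion $O_\Sigma(P)\subseteq\PrePer(\phi_0)\cap\mathbb{P}^1(K(P))$. Your write-up is in fact a bit more explicit than the paper's in justifying $(1)\Rightarrow(2)$ and in deriving the Julia-set statement from equality of canonical measures.
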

\begin{remark} For example, if $c_1, c_2, \dots,c_n\in\overline{\mathbb{Q}}$ are distinct algebraic numbers and $S$ is the finite set of quadratic polynomials, 
\[S=\{x^2+c_i: 1\leq i\leq n\},\] equipped with any any strictly positive probability measure, then \cite[\S.4]{Julia:quad} and Corollary \ref{cor:julia} imply 
that \[\big\{P\in\mathbb{P}^1(\overline{\mathbb{Q}})\,:\,\mathbb{E}_\nu[\hat{h}](P)=0\big\}\]
is finite. On the other hand, it is tempting to guess that there are in fact no points of expected height zero for the sets $S$ above. However, this is not true in general, as the example: $S=\{x^2,x^2-1\}$ and the point $P=-1$, shows. Therefore, given a generating set $S$ whose elements do not all share the same Julia set, it is perhaps an interesting problem to determine the finitely many points (over $\overline{\mathbb{Q}}$) of expected height zero. Keep in mind that this is \emph{not} the same problem as studying the intersection of the preperiodic points of the maps in $S$, as the example: $S=\{x^2-2,x^2+x\}$ and $P=0$, shows.      
\end{remark}  
Moreover, when $V=\mathbb{P}^1$ and $S$ is finite, we show that the canonical and expected canonical heights defined in Theorem \ref{thm:htsiid} admit a decomposition into a sum of local heights:  \vspace{.05cm }
\begin{equation}{\label{intro:local}}
\,\hat{h}_\gamma=\frac{1}{\deg(E)}\,\sum_{v\in M_K}n_v\,\hat{\lambda}_{v,\tilde{\gamma}, E}\;\;\;\;\;\;\text{and}\;\;\;\;\;\; \mathbb{E}_\nu\big[\hat{h}\big]=\frac{1}{\deg(E)}\sum_{v\in M_K}n_v\,\mathbb{E}_\nu\big[\hat{\lambda}_{v,\tilde{\gamma},E}\big];  \vspace{.05cm} 
\end{equation} 
see Theorem \ref{thm:localhts} and Corollary \ref{cor:localhts} in Section \ref{sec:local}. We note that similar local Green functions to those we use in Section \ref{sec:local} seem to have first appeared in \cite[\S 6]{Kawaguchi}, although we were unaware of \cite{Kawaguchi} until after the completion of Section \ref{sec:local}. On the other hand, the local heights $\hat{\lambda}_{v,\tilde{\gamma}, E}$ and the decompositions in (\ref{intro:local}) do not appear in \cite{Kawaguchi}. Furthermore, since we use $\hat{\lambda}_{v,\tilde{\gamma}, E}$ and (\ref{intro:local}) to pose some new questions in number theory from a probabilistic point of view, we carry out our construction of Green functions, instead of just citing \cite[\S 6]{Kawaguchi}. 

Finally, as an application of canonical heights, we study the Zsigmondy sets of non-deterministic dynamical systems. More precisely, let $K$ be a global field, let $V=\mathbb{P}^1$, and let $S$ be a finite set of endomorphisms of $V$. Given a sequence $\gamma\in \Phi_S$ and a basepoint $P\in\mathbb{P}^1(K)$, we say that a valuation $v$ of $K$ is a \emph{primitive prime divisor} of $\gamma_n\cdot P$ if:  
\begin{enumerate}[topsep=5pt, partopsep=5pt, itemsep=5pt] 
\item[\textup{(a)}] $v(\gamma_n\cdot P)>0$,
\item[\textup{(b)}] $v(\gamma_m\cdot P)\leq0$ for all $1\leq m\leq n-1$. 
\end{enumerate}
As with deterministic dynamical systems (see \cite{Xander,Tucker,Riccati,Ingram-Silv,Holly}), one expects primitive prime divisors to exist in orbits unless the maps in $S$ are special in some way. To test this heuristic, we define the \emph{Zsigmondy set} of a pair $(\gamma,P)\in\Phi_S\times\mathbb{P}^1(K)$ to be: 
\[ \mathcal{Z}(\gamma,P):=\{n:\, \gamma_n\cdot P\;\text{has no primitive prime divisor}\}.\]
In particular, when $K$ is a number field we use the $abc$-conjecture and ideas from \cite{Tucker} and \cite{AvgZig} to bound the size of the elements of $\mathcal{Z}(\gamma,P)$ under certain mild conditions on the maps in $S$ and the basepoints $P\in\mathbb{P}^1(K)$. Specifically, we restrict our attention to (good) pairs $(\gamma,P)$ in the following sense:  \vspace{.1cm} 
\[G_S:=\Big\{(\gamma,P)\in\Phi_S\times\mathbb{P}^1(K):\,\hat{h}_\gamma(P)>0\;\text{and}\; 0,\infty\not\in \Orb_{\gamma}(P)\cup\Orb_\gamma(0)\Big\}.\vspace{.05cm}\]
\begin{remark} Clearly, $\mathcal{Z}(\gamma,P)$ is infinite whenever $\Orb_\gamma(P)$ is finite (i.e., $\hat{h}_\gamma(P)=0)$; see Remark \ref{rem:ht}. Moreover, certain technicalities arise in our argument when the orbits we consider contain $0$ or $\infty$. However, it is possible that these latter conditions can be relaxed. 
\end{remark} 
In what follows, $\CritVal(S)=\bigcup_{\phi\in S}\CritVal(\phi)$ denotes the union of all critical values of the maps $\phi\in S$; see \cite[p.353]{SilvDyn}. Moreover, 
\[\PrePer(\Phi_S):=\big\{Q\in\mathbb{P}^1(\overline{K})\,: \hat{h}_\psi(Q)=0\;\, \text{for some}\;\, \psi\in\Phi_S\big\}\] 
denotes the set of points with finite orbit for some sequence in $\Phi_S$ (a set of bounded height).      
\begin{theorem}\label{thm:primdiv} Let $K$ be a number field and suppose that $S=\{\phi_1,\phi_2,\dots, \phi_s\}$ is a set of rational maps on $\mathbb{P}^1$ of degree at least $2$, defined over $K$, and satisfying: 
\begin{enumerate}[topsep=6pt, partopsep=6pt, itemsep=7pt] 
\item[\textup{(1)}] $\#\phi_i^{-1}(0)\geq4$,
\item[\textup{(2)}] $0\not\in\CritVal(S)$. 
\end{enumerate}
Then the $abc$-conjecture (\ref{conj:abc}) implies that $\mathcal{Z}(\gamma,P)$ is finite for all $(\gamma,P)\in G_S$.          
\end{theorem}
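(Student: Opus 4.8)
The plan is to adapt the standard ``Zsigmondy via $abc$'' strategy (following \cite{Tucker} and \cite{AvgZig}) to the stochastic setting by working along a fixed sequence $\gamma$ and controlling everything uniformly in the (finitely many) maps of $S$. Fix $(\gamma,P)\in G_S$ and write $P_n=\gamma_n\cdot P$. For each $n$, decompose the set of places $v$ with $v(P_n)>0$ according to whether $v$ is a primitive prime divisor or not, and sum the valuations. The non-primitive part is controlled because every such $v$ already satisfied $v(P_m)>0$ for some $m<n$, so the logarithmic contribution of non-primitive primes to the ``numerator'' of $P_n$ is bounded by a quantity governed by $\sum_{m<n}$ of the heights $h(P_m)$; here condition (2), $0\notin\CritVal(S)$, guarantees (via Riemann--Hurwitz / ramification bounds) that the multiplicities $v(P_m\circ \text{(preimage under }\theta_m))$ do not blow up, so the sum telescopes geometrically against $\deg_\eta(\gamma_n)$. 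Concretely, the height-controlled hypothesis forces $\inf_\phi \alpha_\phi>1$, so $\deg_\eta(\gamma_n)\to\infty$ at least geometrically, and $h(P_n)\sim \hat h_\gamma(P)\deg_\eta(\gamma_n)$ by Theorem~\ref{thm:htsiid} applied to the constant measure on the singleton (or directly by the limit defining $\hat h_\gamma$); since $(\gamma,P)\in G_S$ we have $\hat h_\gamma(P)>0$, so $h(P_n)$ grows geometrically and dominates any bounded-by-$O(1)$ or $o(\deg_\eta(\gamma_n))$ error.

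Next I would bring in the $abc$-conjecture. The key structural input is hypothesis (1): $\#\phi_i^{-1}(0)\geq 4$ for all $i$, which means that for each map the numerator of $\theta_{n}(P_{n-1})$, as a function of $P_{n-1}$, has at least four distinct roots. Writing $P_n=\theta_n(P_{n-1})$ in the form $A_n/B_n$ with $A_n,B_n$ coprime integral, the divisor of $A_n$ pulls back the divisor $\phi^{-1}(0)$, a sum of $\ge 4$ distinct points, so over a suitable extension the $abc$-conjecture (applied to the three-term relation among $A_n$, $B_n$, and the other factors of $\theta_n$'s numerator/denominator, or more precisely to the coordinates of $P_{n-1}$ specialized at the four distinct roots) yields a bound of the shape
\[
\log|A_n|\;\le\;(1+\varepsilon)\,\bigl(\text{radical of }A_nB_n\bigr)+O_\varepsilon\!\bigl(h(P_{n-1})\bigr),
\]
with a coefficient strictly less than the naive one because $\ge 4$ points are involved (this is exactly the $\tfrac{2}{d}$-versus-$\tfrac{1}{2}$ phenomenon in \cite{Tucker}, where having $\ge 3$ or $\ge 4$ preimages of $0$ makes the $abc$ exponent beat the trivial bound). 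The radical of $A_nB_n$ is in turn bounded by (the sum of $\log$ of) the primitive primes of $P_n$ plus the non-primitive contribution estimated in the previous paragraph. Assembling these inequalities, and using that $0,\infty\notin \Orb_\gamma(P)\cup\Orb_\gamma(0)$ to ensure $A_n,B_n\ne 0$ and that the auxiliary orbit of $0$ stays well away from the bad locus (so the implied constants in the $abc$ application are uniform in $n$), one obtains
\[
h(P_n)\;\le\;(1-\delta)\,h(P_n)+C\sum_{m<n}h(P_m)+C',
\]
for some fixed $\delta>0$ depending only on $\varepsilon$ and on the common lower bound $4$, and constants $C,C'$ depending only on $S$, $K$, and $\gamma$ through $\hat h_\gamma(P)$. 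Since $h(P_n)$ grows geometrically with ratio $\ge \inf_\phi\alpha_\phi>1$ while $\sum_{m<n}h(P_m)$ grows only like a constant times $h(P_{n-1})$, the inequality is violated once $n$ is large, which is impossible if $n\in\mathcal Z(\gamma,P)$; hence $\mathcal Z(\gamma,P)$ is finite.

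To make the error terms genuinely uniform in $n$ I would pass once and for all to a finite extension $L/K$ containing all the points of $\phi_i^{-1}(0)$ and $\phi_i^{-1}(\infty)$ for $i=1,\dots,s$ (a finite set since $S$ is finite), and also containing enough points of $\PrePer(\Phi_S)$ to separate the four preimages; all subsequent $abc$ applications and radical estimates are carried out over $L$, and the final finiteness over $K$ follows since heights only scale by $[L:K]$. I would also record, as in \cite{AvgZig}, the elementary bound $n\le C_1\log h(P_n)+C_2$ that one can extract even before optimizing $\varepsilon$, which is what actually produces an \emph{effective} bound on the elements of $\mathcal Z(\gamma,P)$ rather than mere finiteness.

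The main obstacle I anticipate is the \emph{uniformity in $\gamma$ and in the maps of $S$}: in the deterministic case one fixes a single $\phi$ and all ramification/radical estimates are for that one map, whereas here the composition $\theta_n\circ\cdots\circ\theta_1$ changes at every step, so I must ensure that the constant $\delta>0$ (coming from the $abc$-exponent improvement) and the constants $C,C'$ do not degrade as the word length grows. This is where hypotheses (1) and (2) are doing the real work: condition (2), $0\notin\CritVal(S)$, keeps the local multiplicities of $0$ in the orbit bounded \emph{independently of how deep we are in the composition} (each newly applied $\theta_n$ contributes only simple zeros over $0$), and condition (1) gives the \emph{same} $abc$-improvement $\delta$ at every stage regardless of which $\theta_n$ was applied. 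Verifying that these two facts suffice to keep all implied constants uniform — essentially a careful bookkeeping of how the reduction type of $P_m$ at each place propagates under composition — is the technical heart of the argument.
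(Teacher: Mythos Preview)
Your overall strategy is the right one, but there is a genuine gap in the estimate for the non-primitive contribution, and it is exactly the step where the paper's proof differs from yours.

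You bound the logarithmic contribution of non-primitive primes by $\sum_{m<n} h(P_m)$, and then assert that this is $O(h(P_{n-1}))$ while the left-hand side is of order $h(P_n)$, giving a contradiction. But $h(P_n)/h(P_{n-1})\approx\deg(\theta_n)$ is merely a \emph{bounded} constant, and the $abc$ input (in the precise form one actually obtains, namely $\sum_{v_\mathfrak{p}(F_n(P_{n-1}))>0}N_\mathfrak{p}\ge(\deg F_n-2-\varepsilon)h(P_{n-1})$) only yields, after accounting for poles of $P_{n-1}$, roughly $(\deg F_n-3-\varepsilon)h(P_{n-1})\le[\text{non-primitive}]$. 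With $\deg F_n\ge 4$ this is about $h(P_{n-1})$; but your bound on the right is $\sum_{m<n}h(P_m)\ge h(P_{n-1})$ as well, so no contradiction ensues. The issue is that the dominant term in $\sum_{m<n}h(P_m)$ comes from $m$ close to $n$, and those terms are exactly the same size as the $abc$ gain.

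The missing idea, which is the heart of the paper's proof, is to split the non-primitive primes at $m=\lfloor n/2\rfloor$. For $m\le n/2$ one bounds by $\sum_{m\le n/2}h(\gamma_m\cdot P)$, which is $O(\deg(\gamma_{\lfloor n/2\rfloor})\hat h_\gamma(P))$ by the geometric series. For $m>n/2$ one does \emph{not} use $h(P_m)$ at all: instead one observes that if $v_\mathfrak{p}(\gamma_m\cdot P)>0$ and $v_\mathfrak{p}(\gamma_n\cdot P)>0$ at a prime of good reduction, then $(\gamma_n\mysetminus\gamma_m)\cdot 0\equiv 0\pmod{\mathfrak p}$, so such $\mathfrak p$ divide the short orbit $(\gamma_n\mysetminus\gamma_m)(0)$, whose height is $O(\deg(\gamma_n\mysetminus\gamma_m))$ and hence $O(\deg(\gamma_n\mysetminus\gamma_{\lfloor n/2\rfloor}))$. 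Both halves are then $o(\deg(\gamma_{n-1}))$, which is what delivers the contradiction. This is precisely why the definition of $G_S$ requires $0,\infty\notin\Orb_\gamma(0)$, a hypothesis you never invoke.

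Two smaller points. First, your reading of condition~(2) is not how it is used: in the paper $0\notin\CritVal(S)$ is needed only to guarantee that each numerator $F_{\phi_i}$ is square-free, so that the $abc$ consequence (Proposition~\ref{prop:abc}) applies; it is not a statement about multiplicities propagating along the orbit. Second, passing to a field containing all of $\phi_i^{-1}(0)$ is unnecessary: one applies Proposition~\ref{prop:abc} directly over $K$ to each $F_{\phi_i}$, and since $S$ is finite the implied constants are automatically uniform.
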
 
We also examine the case of global function fields of prime characteristic. However, since our methods are different in this context, the conditions we impose are also different. In particular, we are confined to polynomial dynamics. On the other hand, the result we obtain is unconditional. To formally state this result, let $t$ be an indeterminate, let $p$ be an odd prime, and let $K/\mathbb{F}_p(t)$ be a finite separable extension. Extending the usual derivative $\frac{d}{dt}$ on $\mathbb{F}_p(t)$ to $K$ via implicit differentiation, we let $\beta'$ denote the derivative of $\beta\in K$. Given a polynomial $\phi(x)\in K[x]$ of degree $d\geq3$, write 
\[\phi(x)=A_0x^d+A_{1}x^{d-1}+\dots A_{d-1}x+A_d,\;\;\;\;\;\;\;\; A_i\in K \vspace{.05cm}.\]
Then we have the following important quantities (c.f. \cite[Theorem 1.1]{Riccati}) associated to $\phi$: 
\begin{enumerate}[topsep=8pt, partopsep=8pt, itemsep=8pt]   
\item[\textup{(1)}] $\delta_\phi:=2dA_0A_{2}-(d-1)A_{1}^2$,    
\item[\textup{(2)}] $b_\phi:=(dA_0^2A_2' - (d-1)A_0A_1A_1' - dA_0A_2A_0' + (d-1)A_1^2A_0')/\delta_\phi$
\item[\textup{(3)}] $f_\phi=(d^2A_0^2A_2' - d(d-1)A_0A_1A_1' - d(d-2)A_0A_2A_0' + (d(d-2)+1)A_1^2A_0')/\delta_\phi$
\end{enumerate}
Generalizing \cite[Theorem 1.1]{Riccati} to dynamical systems generated by a finite set of maps, we prove the following: 
\begin{theorem}\label{thm:primdivff} Let $K/\mathbb{F}_p(t)$ be a function field and suppose that $S=\{\phi_1,\phi_2,\dots, \phi_s\}$ is a set of polynomials of degree at least $3$, defined over $K$, and satisfying:
\begin{enumerate}[topsep=8pt, partopsep=8pt, itemsep=10.5pt] 
\item[\textup{(1)}] $\deg(\phi)\deg(\psi)\delta_\phi\delta_\psi\big(b_\phi-f_\psi\big)\neq0$ for all $\phi,\psi\in S$, 
\item[\textup{(2)}] $0\not\in\PrePer(\Phi_S)$.  
\end{enumerate} 
Then $\mathcal{Z}(\gamma,P)$ is finite for all $(\gamma,P)\in G_S$.   
\end{theorem}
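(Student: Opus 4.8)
The plan is to reduce Theorem~\ref{thm:primdivff} to the deterministic function-field result of \cite[Theorem 1.1]{Riccati}, applied uniformly across all finite compositions $\gamma_n\cdot P$. First I would unwind the definition of a primitive prime divisor: $v\in\mathcal{Z}(\gamma,P)$ fails precisely when there is some $v$ with $v(\gamma_n\cdot P)>0$ that did not already divide an earlier term of the orbit. The condition $(\gamma,P)\in G_S$ guarantees $\hat h_\gamma(P)>0$, so by Theorem~\ref{thm:htsiid}(a) the Weil heights $h(\gamma_n\cdot P)$ grow like $\deg_\eta(\gamma_n)\to\infty$; this gives the lower bound on the ``size'' of the $n$-th term. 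The subtlety is to produce a matching \emph{upper} bound on the contribution of the non-primitive part --- i.e. the primes appearing in $\gamma_n\cdot P$ that already appeared earlier --- and this is exactly where the quantities $\delta_\phi, b_\phi, f_\phi$ enter.

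The key mechanism from \cite{Riccati} is that for a single polynomial $\phi$ of degree $d\ge 3$ with $\delta_\phi\neq 0$, one can change coordinates (using $b_\phi$) so that $\phi$ becomes, up to bounded error, a map for which the square root of the $n$-th iterate satisfies a Riccati-type differential recursion, and the quantity $b_\phi - f_\phi$ (for $\phi=\psi$) controls whether the relevant differential form is nonzero; nonvanishing forces the ``wild'' part of the discriminant/ramification to grow, which in turn forces primitive primes to appear via a Riemann--Hurwitz / $abc$-for-function-fields argument (Mason--Stothers). In the non-deterministic setting, at stage $n$ we are looking at $\gamma_n\cdot P = \theta_n(\gamma_{n-1}\cdot P)$, so I would apply the single-map analysis to the \emph{last} map $\theta_n = \phi\in S$ evaluated at the point $Q_{n-1} := \gamma_{n-1}\cdot P$, but track how the differential data propagates from step $n-1$ to step $n$. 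The cross-condition $b_\phi - f_\psi \neq 0$ for \emph{all} pairs $\phi,\psi\in S$ is precisely what is needed so that the differential form carrying the ramification data, when pushed through $\phi$ after having been shaped by $\psi$ at the previous stage, does not degenerate; with only $s$ maps, there are finitely many such pairs, so a single uniform constant works. Condition~(2), $0\notin\PrePer(\Phi_S)$, together with the $G_S$ hypothesis that $0,\infty\notin\Orb_\gamma(P)\cup\Orb_\gamma(0)$, ensures the point $0$ (where we measure valuations) and its preimages stay away from the orbit, so no degenerate cancellations occur and the Riccati substitution remains valid at every stage.

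Concretely, the steps in order: (i) fix the finite set $S$ and extract uniform constants $c_1,c_2$ depending only on $S$ and $K$ from the pairwise nonvanishing in hypothesis~(1) and from height-controlledness (Theorem~\ref{thm:htsiid}); (ii) for a fixed good pair $(\gamma,P)$, set $Q_n = \gamma_n\cdot P$ and, following \cite{Riccati}, attach to the chain the auxiliary differential/discriminant data $D_n$ governing ramification of the tower $K(Q_n)/K(Q_0)$ built step by step from the maps $\theta_1,\dots,\theta_n$; (iii) show $D_n$ is nonzero for all $n$ --- this is where hypothesis~(1) with the pair $(\theta_n,\theta_{n-1})$ is used (and hypothesis~(2) to rule out the point $0$ colliding with the orbit); (iv) apply the Mason--Stothers inequality (function-field $abc$) to the resulting $S$-unit-type equation to get $\deg(\text{primitive part of }Q_n) \ge \deg_\eta(\gamma_n) - c_2 n$ or similar; (v) combine with the lower bound $h(Q_n)\asymp \deg_\eta(\gamma_n)$ from Theorem~\ref{thm:htsiid}(a) and the fact that $\inf_{\phi\in S}\alpha_\phi > 1$ forces $\deg_\eta(\gamma_n)$ to grow geometrically, dominating the linear error $c_2 n$; hence the primitive part is nonzero for all large $n$, i.e. $\mathcal{Z}(\gamma,P)$ is finite. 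The main obstacle I anticipate is step~(iii)/(iv): carefully propagating the Riccati differential data through a \emph{composition of distinct maps} rather than iterates of one map, verifying that the nonvanishing condition $b_\phi - f_\psi\neq 0$ is exactly the obstruction that survives at each composition step, and controlling the (at most linear in $n$) accumulation of bad primes coming from the coefficients of the maps, the branch points of the chain, and the finitely many places of bad reduction of $S$. Everything else --- the growth of heights, the uniformity over the finite set $S$, the reduction to Mason--Stothers --- is routine given \cite{Riccati} and Theorem~\ref{thm:htsiid}.
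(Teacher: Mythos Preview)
Your proposal has the right overall shape --- grow the height on the left, bound the non-primitive contribution on the right, and let geometric growth of $\deg(\gamma_n)$ win --- but the mechanism you describe for the upper bound is not the one that works, and the object to which you apply the Riccati analysis is wrong.

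The orbit points $Q_n=\gamma_n\cdot P$ lie in $K$ (the $\phi_i$ are polynomials over $K$ and $P\in\mathbb{P}^1(K)$), so the ``tower $K(Q_n)/K(Q_0)$'' you propose to analyze is trivial, and every $Q_n\in K$ trivially satisfies a Riccati equation (take $a=b=0$, $c=Q_n'$). The Riccati machinery from \cite{Riccati} is not applied to orbit points at all; it is applied to \emph{roots} $\beta$ of a fixed-length composite $\psi_M\in\Phi_{S,M}$ (these are genuinely algebraic over $K$). The cross-condition $b_\phi-f_\psi\neq 0$ enters exactly here: if a root $\beta$ and its images $\theta_1(\beta)$, $\theta_2(\theta_1(\beta))$ all satisfied Riccati equations, then the uniqueness statement of Lemma~\ref{lem:unique} applied to the two consecutive steps forces $b_{\theta_2}=f_{\theta_1}$, contradicting hypothesis~(1). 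Hypothesis~(2) is used to guarantee that such roots have large degree over $K$ (so the ``small degree'' escape clause in Lemma~\ref{lem:unique} does not apply), via a positive lower bound $\hat h_{\Phi_S,K}^{\min}(2d)>0$ as in Lemma~\ref{lem:htmin}.

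Once you have a root of (a substring of) $\psi_M$ that fails Riccati, Lemma~\ref{lem:RicattitoIso} makes the hyperelliptic curve $Y^2=f_{\gamma,M,1}(X)$ \emph{non-isotrivial}, and the upper bound comes from the \emph{effective Mordell conjecture over function fields} (Theorem~\ref{thm:Mordell}), not from Mason--Stothers. One places the algebraic point $Q_{n,\gamma,P}=\big((\psi_{m_\psi}\circ\gamma_{n-M})(P),\,y\big)$ on this curve and bounds $h(x_{n,\gamma}(P))$ by its relative discriminant, which in turn is controlled by the primes where $y_{n,\gamma}(P)^2$ vanishes. The Zsigmondy assumption then converts those primes into primes dividing earlier orbit terms or terms of $\Orb_\gamma(0)$, exactly as in (\ref{abc3})--(\ref{abc4}), yielding an inequality of the shape (\ref{globalbd7}); from there your step~(v) is correct. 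So the gap is specifically in steps~(iii)--(iv): replace ``differential data on orbit points $+$ Mason--Stothers'' by ``Riccati failure for a root of a bounded-length composite $\Rightarrow$ non-isotrivial hyperelliptic curve $\Rightarrow$ effective Mordell bound on the orbit point viewed as an $x$-coordinate on that curve''.
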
   
\begin{remark} Theorem \ref{thm:primdivff} is quite broadly applicable from the point of view of algebraic geometry: let $d=\max\{\deg(\phi): \phi\in S\}$ and view $S$ as a point of $(\mathbb{A}^{d+1})^s$ by associating to each polynomial in $S$ its $(d+1)$-tuple of coefficients. Then the sets $S$ satisfying condition (1) of Theorem \ref{thm:primdivff} are Zariski dense in $(\mathbb{A}^{d+1})^s$; compare to \cite[Remark 1.1]{Riccati}. Moreover, $\PrePer(\Phi_S)\cap\mathbb{P}^1(K)$ is finite; see Lemma \ref{lem:particularcanonicalheight} or \cite[Corollary B]{Kawaguchi}.  
\end{remark}  \vspace{.1cm}  
\textbf{Acknowledgments:} We thank Patrick Ingram and Joseph Silverman for helpful conversations and for making us aware of previous work on random canonical heights in \cite{Kawaguchi}.      
\section{Expected Canonical Heights}\label{sec:hts}  
To prove Theorem \ref{thm:htsiid}, we need a generalization of Tate's telescoping argument for the usual canonical height. To do this, we recall that the functoriality of heights implies that 
\[h_{V,\eta}\circ\phi=\alpha_\phi h_{V,\eta} +O_{V,\phi}(1)\;\;\;\;\;\text{for $\phi\in S$.}\]    
In particular, we have the following bound for height controlled families:  
\begin{lemma}{\label{lem:uniformbd}} Let $S$ be height controlled with respect to $\eta$, let $C:=\sup\big\{O_{V,\phi}(1)\big\}$, and let $\alpha:=\inf\{\alpha_\phi\}$. If $\rho_r\in\Phi_r$ for $r\geq1$, then \vspace{.05cm}  
\[\bigg|\frac{h_{V,\eta}(\rho_r(Q))}{\deg_\eta(\rho_r)}-h_{V,\gamma}(Q)\bigg|\leq \frac{C}{\alpha-1} \vspace{.05cm} \]
for all $Q\in V(\overline{K})$. In particular, this bound is independent of $\rho_r$, $r$, and $Q$.  
\end{lemma}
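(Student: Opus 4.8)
The plan is to run Tate's telescoping argument along the finite composition $\rho_r=(\theta_i)_{i=1}^r$, exactly as for the classical canonical height of a single map, but keeping careful track of the partial degrees. First I would set up the partial orbit: for $0\le j\le r$ put $Q_j=(\theta_j\circ\cdots\circ\theta_1)(Q)$, so that $Q_0=Q$ and $Q_r=\rho_r(Q)$, and put $D_j=\prod_{i=1}^j\alpha_{\theta_i}$, so that $D_0=1$ and $D_r=\deg_\eta(\rho_r)$. The crucial structural facts are $Q_j=\theta_j(Q_{j-1})$ and $D_j=\alpha_{\theta_j}D_{j-1}$. Combining these with functoriality of the Weil height in the form $h_{V,\eta}(\theta_j(Q_{j-1}))=\alpha_{\theta_j}h_{V,\eta}(Q_{j-1})+e_j$, where $|e_j|\le C(V,\eta,\theta_j)\le C$, the $\alpha_{\theta_j}$ cancels and one gets the clean one-step identity
\[ \frac{h_{V,\eta}(Q_j)}{D_j}-\frac{h_{V,\eta}(Q_{j-1})}{D_{j-1}}=\frac{e_j}{D_j}. \]

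Next I would sum this over $j=1,\dots,r$. The left-hand side telescopes to $\dfrac{h_{V,\eta}(\rho_r(Q))}{\deg_\eta(\rho_r)}-h_{V,\eta}(Q)$ (using $D_0=1$), so by the triangle inequality the quantity to be bounded is at most $\sum_{j=1}^r |e_j|/D_j\le C\sum_{j=1}^r D_j^{-1}$. To finish I would use that every $\alpha_{\theta_i}\ge\alpha>1$, hence $D_j\ge\alpha^j$, hence
\[ C\sum_{j=1}^r D_j^{-1}\le C\sum_{j=1}^\infty \alpha^{-j}=\frac{C}{\alpha-1}. \]
Since $C$ and $\alpha$ are defined purely in terms of $S$ and $\eta$, the resulting bound is visibly independent of $\rho_r$, of $r$, and of $Q$, which is the uniformity asserted.

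There is essentially no serious obstacle here; the lemma is precisely the observation that the standard telescoping estimate survives verbatim when a single map is replaced by an arbitrary finite word in $S$. The one point that genuinely requires the hypotheses — and the only place care is needed — is the convergence of the geometric series $\sum_j D_j^{-1}$: it is \emph{not} enough that each individual $\alpha_\phi>1$, one needs the uniform lower bound $\alpha=\inf_{\phi\in S}\alpha_\phi>1$ from clause (1) of \emph{height controlled} so that $D_j^{-1}\le\alpha^{-j}$ summably, and one needs the uniform upper bound $C=\sup_{\phi\in S}C(V,\eta,\phi)<\infty$ from clause (2) so that each error term $|e_j|$ can be replaced by the single constant $C$. (I would also read the $h_{V,\gamma}$ in the displayed inequality as $h_{V,\eta}$.)
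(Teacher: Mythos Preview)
Your proposal is correct and is essentially the same as the paper's own proof: both run Tate's telescoping argument along the partial compositions $\rho_j=\theta_j\circ\cdots\circ\theta_1$, bound each one-step difference by $C/\deg_\eta(\rho_j)\le C/\alpha^j$, and sum the resulting geometric series. Your reading of $h_{V,\gamma}$ as the evident typo for $h_{V,\eta}$ is also correct.
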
 
\begin{proof} Suppose that $\rho_r=\theta_r\circ\theta_{r-1}\dots \circ\theta_1$ for $\theta_i\in S$, and let $\theta_0$ to be the identity map on $V$. Then define 
\[\rho_{i}:=\theta_i\circ\theta_{i-1}\dots \circ\theta_1\circ\theta_0 \;\;\;\; \text{for $0\leq i\leq r$}.\vspace{.05cm}\]
Note, that $\rho_0=\theta_0$ is the identity map. In particular, inspired by Tate's telescoping argument, we rewrite \vspace{.05cm} 
\begin{equation}{\label{Tate}}
\begin{split}
\bigg|\frac{h_{V,\eta}(\rho_r(Q))}{\deg_\eta(\rho_r)}-h_{V,\gamma}(Q)\bigg|&=\bigg|\sum_{i=0}^{r-1}\frac{h_{V,\eta}(\rho_{r-i}(Q))}{\deg_\eta(\rho_{r-i})}- \frac{h_{V,\eta}(\rho_{r-i-1}(Q))}{\deg_\eta(\rho_{r-i-1})}\bigg|\\[5pt]
&\leq \sum_{i=0}^{r-1}\bigg|\frac{h_{V,\eta}(\rho_{r-i}(Q))}{\deg_\eta(\rho_{r-i})}- \frac{h_{V,\eta}(\rho_{r-i-1}(Q))}{\deg_\eta(\rho_{r-i-1})}\bigg| \\[5pt]
&=\sum_{i=0}^{r-1}\frac{\Big|h_{V,\eta}(\rho_{r-i}(Q))-\deg_\eta(\theta_{r-i})h_{V,\eta}(\rho_{r-i-1}(Q))\Big|}{\deg_\eta(\rho_{r-i})} \\[5pt] 
&\leq \sum_{i=0}^{r-1}\frac{C}{\deg_\eta(\rho_{r-i})} \\[5pt]
&\leq \sum_{i=1}^{r}\frac{C}{\alpha^i} \\[5pt]
&\leq \sum_{i=1}^{\infty}\frac{C}{\alpha^i}=\frac{C}{\alpha-1}.  
\end{split} 
\end{equation}  
This completes the proof of Lemma \ref{lem:uniformbd}.
\end{proof} 
It now follows easily that the canonical height $\hat{h}_{V,\eta,P}(\gamma)$ is well-defined and is uniformly bounded (as we vary possible paths $\gamma\in\Phi_S$) by the the Weil height: 
\begin{lemma}\label{lem:particularcanonicalheight} Let $P\in V(\overline{K})$ and let $\gamma\in\Phi$. Then the canonical height, 
\[\hat{h}_{V,\eta,P}(\gamma):=\lim_{n\rightarrow\infty}\frac{h_{V,\eta}(\gamma_n\cdot P)}{\deg_\eta(\gamma_n)},\] 
is well defined. Moreover, $|\hat{h}_{V,\eta,P}(\gamma)-h_{V,\eta}(P)|\leq\frac{C}{\alpha-1}$ for all $P\in V(\overline{K})$ and $\gamma\in\Phi$.     
\end{lemma}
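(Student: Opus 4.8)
The plan is to deduce Lemma~\ref{lem:particularcanonicalheight} directly from Lemma~\ref{lem:uniformbd} by a standard Cauchy-sequence argument. Fix $P\in V(\overline{K})$ and $\gamma=(\theta_n)_{n\geq1}\in\Phi_S$, and write $a_n:=h_{V,\eta}(\gamma_n\cdot P)/\deg_\eta(\gamma_n)$ for $n\geq1$, with $a_0:=h_{V,\eta}(P)$. The first step is to observe that for $m>n\geq0$ the point $\gamma_m\cdot P$ equals $\rho\cdot(\gamma_n\cdot P)$, where $\rho=(\theta_{n+1},\dots,\theta_m)\in\Phi_{m-n}$ is the ``tail'' of $\gamma$, and $\deg_\eta(\gamma_m)=\deg_\eta(\gamma_n)\deg_\eta(\rho)$. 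Hence
\[
|a_m-a_n|=\frac{1}{\deg_\eta(\gamma_n)}\left|\frac{h_{V,\eta}(\rho\cdot(\gamma_n\cdot P))}{\deg_\eta(\rho)}-h_{V,\eta}(\gamma_n\cdot P)\right|.
\]

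The second step is to apply Lemma~\ref{lem:uniformbd} with $Q=\gamma_n\cdot P$ and the sequence $\rho$ of length $r=m-n$; this bounds the bracketed quantity by $C/(\alpha-1)$ uniformly. Since $\deg_\eta(\gamma_n)=\prod_{i=1}^n\alpha_{\theta_i}\geq\alpha^n$ with $\alpha>1$, we get $|a_m-a_n|\leq \alpha^{-n}C/(\alpha-1)\to0$, so $(a_n)$ is Cauchy in $\mathbb{R}$ and therefore converges; this is exactly the assertion that $\hat h_{V,\eta,P}(\gamma)$ is well defined. (One must be slightly careful that the $O_{V,\phi}(1)$ constants are the additive height constants, so that the telescoping in \eqref{Tate} indeed produces the bound $C/(\alpha-1)$; this is already handled inside the proof of Lemma~\ref{lem:uniformbd}.)

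For the final inequality, take $n=0$ in the estimate above: $|a_m-a_0|\leq C/(\alpha-1)$ for every $m\geq1$, i.e.\ $|h_{V,\eta}(\gamma_m\cdot P)/\deg_\eta(\gamma_m)-h_{V,\eta}(P)|\leq C/(\alpha-1)$. Alternatively, this is literally Lemma~\ref{lem:uniformbd} applied to $Q=P$ and $\rho_r=\gamma_m$. Letting $m\to\infty$ and using that the limit defining $\hat h_{V,\eta,P}(\gamma)$ exists, the inequality passes to the limit, giving $|\hat h_{V,\eta,P}(\gamma)-h_{V,\eta}(P)|\leq C/(\alpha-1)$, as claimed. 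There is no real obstacle here: the content is entirely in Lemma~\ref{lem:uniformbd}, and the only thing to get right is the bookkeeping of how a long composition splits as a tail acting on an intermediate point, together with the observation that the uniform bound in Lemma~\ref{lem:uniformbd} is what makes the partial quotients Cauchy rather than merely bounded.
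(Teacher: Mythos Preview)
Your proof is correct and follows essentially the same approach as the paper: split $\gamma_m$ as a tail $\rho$ acting on $\gamma_n\cdot P$, apply Lemma~\ref{lem:uniformbd} to bound the difference of partial quotients by $\alpha^{-n}C/(\alpha-1)$, conclude the sequence is Cauchy, and then set $n=0$ and pass to the limit for the final inequality. The only cosmetic difference is that the paper writes the estimate with the roles of $m$ and $n$ swapped.
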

\begin{proof} This is a simple application of Lemma \ref{lem:uniformbd}. Let $\gamma=(\theta_n)_{n\geq1}^\infty$, and write \[\gamma_r:=\theta_r\circ\theta_{r-1}\dots\circ\theta_1\;\;\;\;\;\text{for $r>0$}.\] 
Likewise, for $n>m>0$, let $\rho=(\gamma_n\mysetminus\gamma_m):=\theta_{n}\circ\theta_{n-1}\dots \theta_{m+1}$. In particular, we see that \vspace{.1cm}  
\begin{equation}{\label{particularcanonicalheight}} 
\begin{split} 
\bigg|\frac{h_{V,\eta}(\gamma_n\cdot P)}{\deg_\eta(\gamma_n)}-\frac{h_{V,\eta}(\gamma_m\cdot P)}{\deg_\eta(\gamma_m)}\bigg|&=\frac{1}{\deg_\eta(\gamma_m)}\bigg|\frac{h_{V,\eta}(\rho\cdot(\gamma_m\cdot P))}{\deg_\eta(\rho)}-h_{V,\eta}(\gamma_m\cdot P))\bigg|\\[5pt]
&\leq\frac{C}{\alpha^{m}(\alpha-1)}.
\end{split} 
\end{equation}
Here we apply Lemma \ref{lem:uniformbd} to the map $\rho$ and the basepoint $Q:=\gamma_m\cdot P$, and we use that $\deg(\gamma_m)\geq \alpha^m$. Letting $m$ grow arbitrarily large, we see that the distance between the $n$th and $m$th term of the sequence defining $\hat{h}_{V,\eta,P}(\gamma)$ goes to zero. In particular, this sequence is Cauchy and therefore converges. As for the bound $|\hat{h}_{V,\eta,P}(\gamma)-h_{V,\eta}(P)|\leq\frac{C}{\alpha-1}$, let $m=0$ and $n\rightarrow\infty$ in (\ref{particularcanonicalheight}).      
\end{proof} 
\begin{remark}\label{rem:ht} As with the usual canonical height associated to ample divisors $\eta$, we have that $\hat{h}_{V,\eta,P}(\gamma)=0$ if and only if $\Orb_\gamma(P)$ is finite. This follows  readily from Lemma \ref{lem:particularcanonicalheight} and the simple identity 
\[\hat{h}_{V,\eta,\gamma_m\cdot P}(\gamma\mysetminus \gamma_m)=\deg(\gamma_m)\,\hat{h}_{V,\eta,P}(\gamma)\;\;\;\text{for all $\gamma\in\Phi_S$ and $m\geq1$};\]   
here, $\gamma\mysetminus\gamma_m:=(\theta_n)_{n\geq m+1}$ is the $m$th shift of $\gamma=(\theta_n)_{n\geq1}\in\Phi_S$. In particular, if $\hat{h}_{V,\eta,P}(\gamma)=0$ then $\hat{h}_{V,\eta}(\gamma_m\cdot P)$ is absolutely bounded by Lemma \ref{lem:particularcanonicalheight}. In particular, the dynamical orbit $\Orb_\gamma(P)=\{\gamma_m\cdot P: m\geq1\}$ is finite by Northcott's Theorem when $\eta$ is ample. On the other hand, if $\Orb_\gamma(P)$ is finite, then $\hat{h}_{V,\eta}(\gamma_m\cdot P)$ is bounded and $\hat{h}_{V,\eta,P}(\gamma)=0$ as claimed.         
\end{remark}
We now study the expected value of these canonical height functions on the probability space $(\Phi_S,\mathcal{F},\nu)$ of i.i.d sequences of elements of $S$ distributed according to $\nu_1$; see \cite[Theorem 10.4]{ProbabilityText}. In what follows, we suppress $S$ and $\mathcal{F}$ in the notation and simply write $\Phi$ and $\nu$ where appropriate.  
\begin{proof}[(Proof of Theorem \ref{thm:htsiid})] Fix $P\in V(\overline{K})$ and consider the sequence of random variables $h_{V,\eta,n,P}:\Phi\rightarrow \mathbb{R}$ defined by 
\[h_{V,\eta,P,n}(\gamma):= \frac{h_{V,\eta}(\gamma_n\cdot P)}{\deg_\eta(\gamma_n)}. \]
It follows from the definition of $(\Phi,\nu)$ that each $h_{V,\eta,P,n}$ is $\nu$-measurable; see, for instance,  \cite[Theorem 10.4]{ProbabilityText}. On the other hand, the random variable $\hat{h}_{V,\eta,P}:\Phi\rightarrow\mathbb{R}$ is the pointwise limit of the $\{h_{V,\eta,P,n}\}_{n\geq1}$:  
\[\hat{h}_{V,\eta,P}(\gamma)=\lim_{n\rightarrow\infty}h_{V,\eta,P,n}(\gamma)\;\;\;\;\;\; \text{for all $\gamma\in\Phi$}.\]
Moreover, the functions $\{h_{V,\eta,P,n}\}_{n\geq1}$ are absolutely bounded; see Lemma \ref{lem:uniformbd}. Hence, the Lebegue dominated convergence theorem \cite[Theorem 9.1]{ProbabilityText} implies that $\hat{h}_{V,\eta,P}$ is integrable and that 
\begin{equation}{\label{dominatedconvergence}} 
\mathbb{E}_\nu\big[\hat{h}_{V,\eta}\big](P):=\int_{\Phi}\hat{h}_{V,\eta,P}(\gamma)\,d\nu=\lim_{n\rightarrow\infty}\int_{\Phi} h_{V,\eta,P,n}(\gamma)\,d\nu.
\end{equation} 
Hence, $\mathbb{E}_\nu\big[\hat{h}_{V,\eta}\big](P)$ is well defined for every $P\in V(\bar{K})$. As for properties $(a)$ and $(b)$, note that Lemma \ref{lem:particularcanonicalheight} and the triangle inequality (for integrals) imply that 
\begin{equation*} 
\begin{split} 
\Big|\mathbb{E}_\nu\big[\hat{h}_{V,\eta}\big](P)-h_{V,\eta}(P)\Big|&=\bigg|\int_{\Phi}\hat{h}_{V,\eta,P}(\gamma)\,d\nu-h_{V,\eta}(P)\cdot\int_{\Phi} 1\,d\nu\,\bigg|=\bigg|\int_{\Phi}\hat{h}_{V,\eta,P}(\gamma)-h_{V,\eta}(P)\,d\nu\bigg|  \\[2pt]  
&\leq\int_{\Phi}\Big|\hat{h}_{V,\eta,P}(\gamma)-h_{V,\eta}(P)\Big|\,d\nu\leq \frac{C}{\alpha-1}.  
\end{split} 
\end{equation*} 
Hence, $\mathbb{E}_\nu\big[\hat{h}_{V,\eta}\big]=h_{V,\eta}+O(1)$ as claimed. As for the transformation property in (b), we have first that
\[\mathbb{E}_{\nu_n}\bigg[\frac{\mathbb{E}_\nu\,[\hat{h}_{V,\eta}](\gamma_n\cdot P)}{\deg_\eta(\gamma_n)}\bigg]:=\int_{\Phi_n}\frac{\mathbb{E}[\hat{h}_{V,\eta}](\gamma_n\cdot P)}{\deg_\eta(\gamma_n)}\,d\nu_n:=\sum_{\gamma_n\in\Phi_n}\frac{\mathbb{E}[\hat{h}_{V,\eta}](\gamma_n\cdot P)}{\deg_\eta(\gamma_n)}\nu_n(\gamma_n)\] 
by definition. In particular, the Lebegue dominated convergence theorem \cite[Theorem 9.1]{ProbabilityText}, Fubini's Theorem \cite[Theorem 10.3]{ProbabilityText}, and \cite[Corollary 10.2]{ProbabilityText} together imply that 
\begin{align*}
\mathbb{E}_{\nu_n}\bigg[\frac{\mathbb{E}\,[\hat{h}_{V,\eta}](\gamma_n\cdot P)}{\deg_\eta(\gamma_n)}\bigg]&= \int_{\Phi_n}\frac{\displaystyle\lim_{m\rightarrow\infty}\int_{\Phi_m}\frac{h_{V,\eta}(\gamma_m\cdot (\gamma_n\cdot P))}{\deg_\eta(\gamma_m)} \,d\nu_m}{\deg_\eta(\gamma_n)} \,d\nu_n && \text{(\ref{dominatedconvergence}) and \cite[Corollary 10.2]{ProbabilityText}}\\[3pt]
&=\lim_{m\rightarrow\infty} \int_{\Phi_n}\int_{\Phi_m} \frac{h_{V,\eta}(\gamma_m\cdot (\gamma_n\cdot P))}{\deg_\eta(\gamma_m)\deg_\eta(\gamma_n)} \,d\nu_m\,d\nu_n && \text{\cite[Theorem 9.1]{ProbabilityText}}\\[3pt]
&=\lim_{m\rightarrow\infty} \int_{\Phi_n}\int_{\Phi_m} \frac{h_{V,\eta}((\gamma_m\circ\gamma_n)\cdot P))}{\deg_\eta(\gamma_m\circ\gamma_n)} \,d\nu_m\,d\nu_n\\[3pt]
&=\lim_{m\rightarrow\infty}\int_{\Phi_{n+m}} \frac{h_{V,\eta}(\gamma_{n+m}\cdot P)}{\deg_\eta(\gamma_{n+m})}\,d\nu_{n+m} &&\text{\cite[Theorem 10.3]{ProbabilityText}}\\[5pt]
&=\lim_{m\rightarrow\infty} \int_{\Phi} h_{V,\eta,P,n+m}(\gamma)\,d\nu &&\text{\cite[Corollary 10.2]{ProbabilityText}}  \\[3pt]
&=\int_{\Phi}\hat{h}_{V,\eta,P}(\gamma)\,d\nu &&\text{(\ref{dominatedconvergence})} \\[3pt]
&=\mathbb{E}\big[\hat{h}_{V,\eta}\big](P). \vspace{.2cm}  
\end{align*} 
On the other hand, $\nu_n(\gamma_n)/\deg_\eta(\gamma_n)=(d_{\nu_1,\eta})^{-n}\,\nu_n^*(\gamma_n)$ for all $\gamma_n\in\Phi_n$ by definition of $\nu_n^*$. Therefore, we deduce that
\begin{equation} 
\begin{split}
\mathbb{E}\big[\hat{h}_{V,\eta}\big](P)&=\mathbb{E}_{\nu_n}\bigg[\frac{\mathbb{E}\,[\hat{h}_{V,\eta}](\gamma_n\cdot P)}{\deg_\eta(\gamma_n)}\bigg]=\sum_{\gamma_n\in\Phi_n}\frac{\mathbb{E}[\hat{h}_{V,\eta}](\gamma_n\cdot P)}{\deg_\eta(\gamma_n)}\nu_n(\gamma_n)\\[3pt] 
&=(d_{\nu_1,\eta})^{-n}\hspace{-.2cm}\sum_{\gamma_n\in\Phi_n}\mathbb{E}[\hat{h}_{V,\eta}](\gamma_n\cdot P)\nu_n^*(\gamma_n)=(d_{\nu_1,\eta})^{-n}\, \mathbb{E}_{\nu_n^*}\Big[\mathbb{E}[\hat{h}_{V,\eta}](\gamma_n\cdot P)\Big] \\[1pt] 
\end{split}  
\end{equation} 
as claimed. This is the analog of the usual transformation property for canonical heights defined by a fixed endomorphism.  
\end{proof} 
\begin{remark} We note that if $\deg(\phi)=d$ for all $\phi\in S$, then $d_{\nu,\eta}=d$, $\nu^*_k=\nu_k$, and we obtain the transformation formula $\mathbb{E}_{\nu_k}\Big[\mathbb{E}_{\nu}\big[\hat{h}_{V,\eta}\big](\gamma_k\cdot P)\Big]=d^{k}\;\mathbb{E}_{\nu}\big[\hat{h}_{V,\eta}\big](P).$ 
\end{remark} 
\begin{remark} We note that for each $P\in V$ the variance of the distribution of $\hat{h}_{V,\eta,P}(\gamma)$ as we vary over sequences $\gamma\in\Phi_S$ is absolutely bounded by Lemma \ref{lem:particularcanonicalheight} and Popoviciu's inequality:  
\[\sigma_{V,\eta,\nu,P}^2:=\int_{\Phi_S}\Big(\hat{h}_{V,\eta,P}-\mathbb{E}_\nu[\hat{h}_{V,\eta}](P)\Big)^2\,d\nu\leq \frac{1}{4}\Big(\frac{2C}{\alpha-1}\Big)^2.\]
\end{remark} 
\begin{example}[Finite collections] Let $V:=\mathbb{P}^N$, let $S=\{\phi_1,\phi_2,\dots,\phi_s\}$ be any finite collection of endomorphisms of degree at least two, and let $\nu_1$ be any probability measure on $S$. Then any divisor class $\eta\in \Pic(\mathbb{P}^N)\cong\mathbb{Z}$ satisfies $\phi_i^*(\eta)=\deg(\phi_i)\eta$ for all $1\leq i\leq s$. Moreover, there are constants $O_{\eta,i}(1)$ for each $1\leq i\leq s$ such that  
\[h_{\mathbb{P}^N,\eta}\circ\phi_i=\deg(\phi_i) h_{V,\eta} +O_{\eta,i}(1).\]  
Therefore, $S$ is height controlled with respect to any divisor: $C_\eta:=\max\{O_{\eta,i}(1)\}$ is finite. In particular, one could take: $V:=\mathbb{P}^1$, $S:=\{\phi_1, \phi_2\}$ to be any two rational maps in one-variable (of degree at least $2$), and $\nu_1(\phi_1)=1/2=\nu_1(\phi_2)$: that is, we flip a fair coin to determine which map to apply at every stage of composition.    
\end{example}  
\begin{example}[Unicritical maps of bounded height]{\label{unicrit}} Let $V:=\mathbb{P}^1$, let $\eta=\infty$, let $B>0$, and consider the set of functions 
\[S_B:=\big\{\phi_{d,c}(z)=z^d+c\,:\, c\in\mathbb{Z},\; |c|\leq B,\; d>1\big\}.\]
Then, \cite[Lemma 12]{Ingram} implies that 
\begin{equation}\label{htbdunicritical}
\;\;\;|h_\eta(\phi_{d,c}(P))-dh_\eta(P)|\leq \log|2c|\leq \log|2B|\;\;\;\;\;\;\; \text{for all $P\in\mathbb{P}^1(\mathbb{Q})\mysetminus\{\eta\}=\mathbb{Q}$.}
\end{equation} 
In particular, (for simplicity) we consider only rational basepoints. Hence, (\ref{htbdunicritical}) implies that $S_B$ is height controlled with respect to $\eta$. From here, we can take any probability measure $\nu_1$ we like on $S_B$ (note that $S_B$ is set theoretically just $\mathbb{N}^{2B+1}$). For instance, one could consider any of the following well-known probability measures: \\[4pt]
\underline{\emph{Geometric:}} Let $r\in(0,1)$ and let  $\nu_{B,r,1}$ be the measure on $(S_B,2^{S_B})$ generated by 
\[\nu_{B,r,1}(\phi_{d,c})=\frac{(1-r)r^{d-2}}{2B+1}.\] 
Then one checks via the summation formula for geometric series that $\sum \nu_{B,r,1}(\phi_{d,c})=1$. In particular, the data $\big(\mathbb{P}^1(\mathbb{Q}), \eta, S_B, \nu_{B,r,1}\big)$ satisfies the hypothesis of Theorem \ref{thm:htsiid}. \\[5pt]
\underline{\emph{Poisson:}} Let $\lambda>0$ and let $\nu_{B,\lambda,1}$ be the measure on $(S_B,2^{S_B})$ generated by 
\[\nu_{B,\lambda,1}(\phi_{d,c})=\frac{e^{-\lambda}\lambda^{d-2} }{(2B+1)(d-2)!}.\] 
Then one checks via the exponential summation formula that $\sum \nu_{B,\lambda,1}(\phi_{d,c})=1$. In particular, the data $\big(\mathbb{P}^1(\mathbb{Q}), \eta, S_B, \nu_{B,\lambda,1}\big)$ satisfies the hypothesis of Theorem \ref{thm:htsiid}.

\begin{remark}\label{htcont} Likewise, given any probability measure on the set of integers $|c|\leq B$, one can twist the usual Geometric and Poisson distributions to form new probability measures on $S_B$ and study the corresponding expected canonical heights. Even more generally, for any finite set of rational maps $S$, the set $\bar{S}=\{\phi\circ x^d\,: \phi\in S, d\geq2\}$ is an infinite, height controlled family (generalizing the unicritical maps of bounded height).  
\end{remark}   
\end{example}
We now prove a dynamical application of the expected canonical height function (analogous to the characterization of preperiodic points as the kernel of the usual canonical height).    
\begin{proof}[(Proof of Corollary \ref{cor:hteq0})] If there exists a finite, $S$-stable subset $F_P$ containing $P$, then $\Orb_\gamma(P)$ is contained in $F_P$ for all $\gamma\in\Phi_S$; hence, 
\[\nu\big(\{\gamma\in\Phi_S:\, \Orb_\gamma(P)\; \text{is finite}\;\}\big)=1.\] 
Therefore, (1) $\implies$ (2). On the other hand, if (2) holds, then we see immediately that  
\[\nu\big(\{\gamma\in\Phi_S:\, \hat{h}_{V,\eta,P}(\gamma)=0\}\big)=1.\] 
In particular, the expected canonical height must vanish: $\mathbb{E}_\nu[\hat{h}_{{V,\eta}}](P)=\int_{\Phi_S}\hat{h}_{V,\eta,P}(\gamma)\,d\nu=0$, and (2) $\implies$ (3). Finally, suppose that $\mathbb{E}_\nu[\hat{h}_{{V,\eta}}](P)=0$. Then the transformation law in property (b) of Theorem \ref{thm:htsiid} implies that 
\[\mathbb{E}_{\nu_k^*}\Big[\mathbb{E}_{\nu}\big[\hat{h}_{V,\eta}\big](\gamma_k\cdot P)\Big]=(d_{\nu,\eta})^{k}\,\mathbb{E}_{\nu}\big[\hat{h}_{V,\eta}\big](P)=0\]
for all $k\geq1$. However, $\Phi_{S,k}$ is countable, so that 
\begin{equation}\label{eq0} 
0=\mathbb{E}_{\nu_k^*}\Big[\mathbb{E}_{\nu}\big[\hat{h}_{V,\eta}\big](\gamma_k\cdot P)\Big]=\sum_{\gamma_k\in\Phi_k}\mathbb{E}[\hat{h}_{V,\eta}](\gamma_k\cdot P)\nu_k^*(\gamma_k). 
\end{equation} 
Moreover, since $\eta$ is ample, \cite[Theorem 2.3 (3)]{Kawaguchi} implies that $\hat{h}_{V,\eta,Q}$ is non-negative for all $Q\in\mathbb{P}^1(\overline{K})$. Therefore, $\mathbb{E}_\nu[\hat{h}_{V,\eta}](\gamma_k\cdot P)$ is non-negative for all $\gamma_k\in\Phi_{S,k}$ and all $k\geq1$. On the other hand, $\nu_k^*(\gamma_k)$ is positive, since $\deg_\eta(\gamma_k)$ is positive and $\nu_1$ is a strictly positive measure. Hence, (\ref{eq0}) implies that $\mathbb{E}_\nu[\hat{h}_{V,\eta}](\gamma_k\cdot P)=0$ for all $\gamma_k\in\Phi_{S,k}$ and all $k\geq1$. However, $\mathbb{E}_\nu[\hat{h}_{V,\eta}]=h_{V,\eta}+O(1)$, so that 
\begin{equation}\label{total orbit2} 
F_P:=\bigcup_{\gamma\in\Phi_{S}}\bigcup_{k\geq0}\,(\gamma_k\cdot P)\subseteq V(K(P))
\end{equation}  
must be a set of bounded height (with respect to $h_{V\eta}$); here $K(P)/K$ is the field of definition of $P$. In particular, Northcott's theorem and the fact that $\eta$ is ample imply that $F_P$ is finite. Moreover, $F_P$ is $S$-stable. Therefore (3) $\implies$ (1), completing the argument.     
\end{proof} 
\begin{remark} To summarize, if $\eta$ is an ample divisor, then we have defined a height function $\mathbb{E}_\nu[\hat{h}_{V,\eta}]$ on $V$ that characterizes the existence of a finite subset that is simultaneously stable for all maps in $S$. In fact, we could formally define a map $\mathbb{E}_\nu[\hat{h}_{V,\eta}]: \text{Fin}(V)\rightarrow \mathbb{R}$ on the set $\text{Fin}(V)$ of finite subsets of $V$ given by $\mathbb{E}_\nu[\hat{h}_{V,\eta}](F)=\sum_{P\in F}\mathbb{E}_\nu[\hat{h}_{V,\eta}](P)$, and note that $\mathbb{E}_\nu[\hat{h}_{V,\eta}](F)=0$ if and only if $F$ is $S$-stable. 
\end{remark} 
\begin{remark} Note that Corollary \ref{cor:hteq0} does not depend on the probability measure $\nu_1$ on $S$.   
\end{remark} 
\begin{proof}[(Proof of Corollary \ref{cor:julia})] If $P\in\mathbb{P}^1(\overline{\mathbb{Q}})$ is such that $\mathbb{E}_\nu[\hat{h}](P)=0$, then Corollary \ref{cor:hteq0} implies that $P\in\PrePer(\phi)$ for all $\phi\in S$; however, note that the converse is false in general: $P=0$ and $S=\{x^2-2,x^2+x\}$; In any case, if $\big\{P\in\mathbb{P}^1(\overline{\mathbb{Q}})\,:\,\mathbb{E}_\nu[\hat{h}](P)=0\big\}$ is infinite, then $\bigcap_{\phi\in S}\PrePer(\phi)$ is infinite, and (1) implies (2). On the other hand, if $\bigcap_{\phi\in S}\PrePer(\phi)$ is infinite, then $\PrePer(\phi)\cap\PrePer(\psi)$ is infinite for all $\phi,\psi\in S$. Hence, we deduce that $\PrePer(\phi)=\PrePer(\psi)$ for all $\phi,\psi\in S$ by \cite[Theorem 1.2]{Baker-DeMarco}. Therefore, (2) implies (3). Finally, suppose that $\PrePer(\phi)=\PrePer(\psi)$ for all $\psi\in S$, and take any point $P\in\PrePer(\phi)$. Then
\[F_P:=\bigcup_{\gamma\in\Phi_{S}}\bigcup_{k\geq0}\,(\gamma_k\cdot P)\subseteq \PrePer(\phi)\cap \mathbb{P}^1(K(P));\]
here we use that $\PrePer(\psi)$ is $\psi$-stable and that $\PrePer(\phi)=\PrePer(\psi)$ for all $\psi\in S$. However, $\PrePer(\phi)$ is a set of bounded height by Northcott's theorem. Therefore, $F_P$ is a finite set since $[K(P):K]$ is a finite extension. On the other hand, $F_P$ is clearly $S$-stable, so that Corollary \ref{cor:hteq0} implies that $\mathbb{E}_\nu[\hat{h}](P)=0$. Hence, we have shown that \[\PrePer(\phi)\subseteq\big\{P\in\mathbb{P}^1(\overline{\mathbb{Q}})\,:\,\mathbb{E}_\nu[\hat{h}](P)=0\big\}.\] 
In particular, since $\PrePer(\phi)$ is an infinite set by \cite[Exercise 1.18]{SilvDyn}, we se that (3) implies (1) as claimed.            
\end{proof} 
\section{Primitive prime divisors in generalized orbits}\label{sec:primdiv}
To begin this section, let $K$ be a number field and let $\mathfrak{o}_K$ be the ring of integers of $K$. Given a prime ideal $\mathfrak{p}\subset \mathfrak{o}_K$, we let $k_\mathfrak{p}:=\mathfrak{o}_K/\mathfrak{p}\mathfrak{o}_K$ and $N_\mathfrak{p}:=(\log\#k_\mathfrak{p})/[K:\mathbb{Q}]$ be the residue field and local (logarithmic) degree of $\mathfrak{p}$ respectively, and define the Weil height \cite[\S3.1]{SilvDyn} of $\alpha\in K^*$ by  
\[h(\alpha)=-\sum_{\mathfrak{p}}\min(v_\mathfrak{p}(\alpha),0) N_\mathfrak{p}+\frac{1}{[K:\mathbb{Q}]}\sum_{\sigma:K\rightarrow\mathbb{C}}\max(\log|\sigma(\alpha)|,0). \]
In particular, it follows from the product formula \cite[Proposition 3.3]{SilvDyn} that \vspace{.1cm} 
\begin{equation}{\label{ht:lbd}}
\sum_{v_\mathfrak{p}(\alpha)>0}\hspace{-.2cm}v_\mathfrak{p}(\alpha)N_\mathfrak{p} \leq h(\alpha)\;\;\;\;\;\;\text{and}\;\;\;\;\;\; -\sum_{v_\mathfrak{p}(\alpha)<0}\hspace{-.2cm}v_\mathfrak{p}(\alpha)N_\mathfrak{p}\leq h(\alpha)
\end{equation} 
for all $\alpha\in K^*$. As in \cite{Tucker} and \cite{me:abc}, the main tool we use to study the prime factors of elements of orbits is the (Roth) $abc$-conjecture.    
\begin{conjecture}[$abc$-conjecture]\label{conj:abc} Let $K$ be a number field. Then for any $\epsilon>0$, there exists a constant $C_{K,\epsilon}$ such that for all $a,b,c\in K^*$ satisfying $a+b=c$, we have that \vspace{.05cm} 
\[h(a,b,c)<(1+\epsilon)(\text{rad}(a,b,c))+C_{K,\epsilon}. \vspace{.05cm} \]
Here \text{rad}(a,b,c) is a suitably defined radical of the tuple $(a,b,c)$; see \cite[\S 3]{Tucker}.  
\end{conjecture}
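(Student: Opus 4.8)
The statement above is the $abc$-conjecture over a number field, one of the central open problems of Diophantine geometry. It is adopted here purely as a hypothesis: every later result that invokes it (in particular Theorem~\ref{thm:primdiv}) is stated conditionally, and no proof is given or expected within this paper. So in place of a genuine proof I can only describe the shape such an argument would have to take and indicate where it breaks down. The first thing I would do is record the one regime in which an unconditional analogue exists, namely the function-field (polynomial) case, and try to transport its proof.

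\textbf{The polynomial analogue as a template.} The Mason--Stothers theorem states that for pairwise coprime polynomials $a,b,c$ over an algebraically closed field of characteristic zero with $a+b=c$, one has $\max(\deg a,\deg b,\deg c)\le N_0(abc)-1$, where $N_0$ counts distinct roots. Its proof is elementary and is the pattern one would try to imitate: form the Wronskian $W=ab'-a'b$; use $a+b=c$ to see $W=ab'-a'b=-(ac'-a'c)=bc'-b'c$, so that $W$ is divisible by a large common factor built from the repeated roots of $a$, $b$, $c$; then bound $\deg W$ from above by $\deg a+\deg b-1$ and relate the common factor to $N_0(abc)$. To attempt the number-field case one would need an arithmetic surrogate for the derivative and the Wronskian --- precisely the object predicted by Vojta's dictionary between Nevanlinna theory and Diophantine approximation --- and the hard part is exactly that no such construction is known. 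This is where every naive attempt stalls.

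\textbf{Reduction to a single case of Vojta's conjecture.} A second step I would take is to pass to the equivalent geometric formulation: writing $\lambda=a/c$, the inequality becomes a height bound for points of $\PP^1$ away from $\{0,1,\infty\}$ --- roughly, $h(\lambda)\le(1+\epsilon)$ times a truncated counting function of $\lambda$ at those three points, plus a constant --- i.e.\ Vojta's conjecture for the pair $\big(\PP^1,\{0,1,\infty\}\big)$. By Belyi's theorem every smooth projective curve over $\overline{\QQ}$ maps to $\PP^1$ ramified only over $\{0,1,\infty\}$, so this lone case already implies effective Mordell and much more (Elkies's observation); that dependence is what makes $abc$ so hard, and it is also why a ``soft'' proof is unlikely. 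One would then be left to establish Vojta's conjecture in that one case, for which the only serious program is Mochizuki's inter-universal Teichm\"uller theory, whose status remains unresolved.

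\textbf{Summary of the obstacle.} The main (and only) obstacle is that the $abc$-conjecture is open; there is no routine calculation to grind through. For the present paper this is not a gap to be filled: Conjecture~\ref{conj:abc} is used solely as a black box to drive the $abc$-type estimates of Section~\ref{sec:primdiv}, and a reader wanting unconditional conclusions should instead consult the function-field Theorem~\ref{thm:primdivff}, which avoids the hypothesis entirely.
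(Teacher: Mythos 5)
You are right that Conjecture~\ref{conj:abc} is the open $abc$-conjecture, which the paper (following \cite{Tucker}) states only as a hypothesis and never proves; your treatment of it as a black box, with Theorem~\ref{thm:primdiv} conditional on it and Theorem~\ref{thm:primdivff} as the unconditional function-field counterpart, matches the paper exactly. No proof is expected or given, so there is nothing to correct.
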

In fact, the key idea we use here is that the $abc$-conjecture implies that polynomial values are reasonably square-free in the following sense; see \cite[Proposition 3.4]{Tucker}
\begin{proposition}\label{prop:abc} Let $F(x)\in\mathfrak{o}_K[x]$ be a polynomial of degree at least $3$ without repeated factors. Then for all $\epsilon>0$ there exists a constant $C_{f,\epsilon}$ such that: \vspace{.05cm} 
\[\sum_{v_\mathfrak{p}(F(z))>0}\hspace{-.4cm}\text{N}_\mathfrak{p}\geq (\deg(F)-2-\epsilon)h(z)+C_{f,\epsilon}\;\;\;\text{for all $z\in K$.} \]
\end{proposition}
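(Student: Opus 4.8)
The plan is to reduce to the case where $F$ splits completely and then feed the classical three-term linear relation among its roots into the $abc$-conjecture. First I would pass to a finite extension $L/K$ over which $F$ factors as $F(x)=c_0\prod_{i=1}^{d}(x-\alpha_i)$ with $c_0\in K^*$, the roots $\alpha_i\in L$ distinct, and $d=\deg F\geq 3$. Since the Weil height does not depend on the ground field, and since every prime $\mathfrak{p}$ of $K$ with $v_\mathfrak{p}(F(z))>0$ lies below a prime $\mathfrak{P}$ of $L$ with $v_\mathfrak{P}(F(z))>0$ while the normalized residue degrees satisfy $\sum_{\mathfrak{P}\mid\mathfrak{p}}N_\mathfrak{P}\leq N_\mathfrak{p}$, it suffices to prove the stated lower bound for $\sum_{v_\mathfrak{P}(F(z))>0}N_\mathfrak{P}$ taken over primes of $L$, for $z\in K\subseteq L$. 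Next I would set aside the finite bad set $S_0$ consisting of the archimedean places of $L$ together with the primes dividing $c_0$, dividing $\prod_{i<j}(\alpha_i-\alpha_j)$, or at which some $\alpha_i$ is non-integral. For $\mathfrak{P}\notin S_0$ a prime dividing $z-\alpha_i$ cannot divide $z-\alpha_j$ for $j\neq i$ (it would divide $\alpha_i-\alpha_j$), so $v_\mathfrak{P}(F(z))=v_\mathfrak{P}(z-\alpha_i)$ for the unique relevant $i$; hence, writing $\text{rad}(\beta):=\sum_{v_\mathfrak{P}(\beta)>0}N_\mathfrak{P}$, one has
\[\sum_{v_\mathfrak{P}(F(z))>0}N_\mathfrak{P}=\sum_{i=1}^{d}\text{rad}(z-\alpha_i)+O_F(1).\]

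The engine is the identity
\[(\alpha_2-\alpha_3)(z-\alpha_1)+(\alpha_3-\alpha_1)(z-\alpha_2)+(\alpha_1-\alpha_2)(z-\alpha_3)=0,\]
valid for any three of the roots. Reading this as $a+b=c$ (for $z$ not among the finitely many roots lying in $K$) and applying the $abc$-conjecture (\ref{conj:abc}) over $L$ gives $h(a,b,c)\leq(1+\epsilon)\,\text{rad}(a,b,c)+C_{L,\epsilon}$. Here $h(a,b,c)\geq h(z)+O_F(1)$, because $z\mapsto(a:b:c)$ is a degree-one morphism $\mathbb{P}^1\to\mathbb{P}^2$ (the coordinate forms are linear in $z$ with no common factor), while $\text{rad}(a,b,c)=\text{rad}(z-\alpha_i)+\text{rad}(z-\alpha_j)+\text{rad}(z-\alpha_k)+O_F(1)$ by the coprimality noted above. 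So for every triple $\{i,j,k\}$,
\[\text{rad}(z-\alpha_i)+\text{rad}(z-\alpha_j)+\text{rad}(z-\alpha_k)\geq(1-\epsilon)h(z)+C_{F,\epsilon},\]
and, using instead the two-term identity $(z-\alpha_i)-(z-\alpha_j)=\alpha_j-\alpha_i$, the sharper pairwise bound $\text{rad}(z-\alpha_i)+\text{rad}(z-\alpha_j)\geq(1-\epsilon)h(z)+C_{F,\epsilon}$. When $d=3$ the displayed triple estimate reads $\text{rad}(F(z))\geq(1-\epsilon)h(z)+O(1)$, which is exactly the proposition.

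For $d\geq 4$ the remaining content is the assertion that the non-squarefree (powerful) part of $F(z)$ is $O(h(z))$. Indeed $\text{rad}(F(z))=\sum_\mathfrak{P}v_\mathfrak{P}(F(z))^{+}N_\mathfrak{P}-\sum_\mathfrak{P}\bigl(v_\mathfrak{P}(F(z))-1\bigr)^{+}N_\mathfrak{P}$, and $\sum_\mathfrak{P}v_\mathfrak{P}(F(z))^{+}N_\mathfrak{P}=d\,h(z)+O(1)$ up to an archimedean correction, so (modulo the archimedean places) the bound $\text{rad}(F(z))\geq(d-2-\epsilon)h(z)+C$ is equivalent to $\sum_\mathfrak{P}(v_\mathfrak{P}(F(z))-1)^{+}N_\mathfrak{P}\leq(2+\epsilon)h(z)+O(1)$. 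I would deduce this by combining the inequalities above over a suitable family of triples and pairs, together with the trivial bound $\text{rad}(z-\alpha_i)\leq h(z)+O(1)$ and the elementary fact that a squareful ideal contributes at most half of its (logarithmic) norm to its own radical; this is precisely the counting argument of Granville carried out in \cite[Proposition 3.4]{Tucker}, which I would cite for the exact combinatorial bookkeeping.

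The hard part is this last step: extracting the \emph{sharp} coefficient $\deg F-2$, rather than merely a positive multiple of $h(z)$, requires controlling how many of the $d$ linear factors of $F$ can simultaneously carry a large square divisor, and this is where the genuine strength of the $abc$-conjecture is used. By contrast, the reduction to the split case, the isolation of $S_0$, the comparison between $\mathbb{P}^1$ over $K$ and over $L$, and the handling of the archimedean contribution — needed because $z$ ranges over all of $K$ and not just $\mathcal{O}_K$ — are routine.
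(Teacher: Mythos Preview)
The paper gives no proof of this proposition; it simply records it with a reference to \cite[Proposition~3.4]{Tucker}. Your outline is essentially the argument carried out there (Granville's method, extended to number fields by Gratton--Nguyen--Tucker), and since you explicitly hand off the sharp step --- extracting the coefficient $\deg F-2$ rather than the weaker $\deg F/2$ that your pairwise and triple inequalities alone would yield --- to that same citation, your proposal and the paper coincide.
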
 
As a reminder, in our study of prime factors in dynamical orbits, we restrict our attention to pairs $(\gamma,P)\in\Phi_S\times\mathbb{P}^1(K)$ in the following set:  
\[G_S:=\Big\{(\gamma,P)\in\Phi_S\times\mathbb{P}^1(K):\,\hat{h}_\gamma(P)>0\;\text{and}\; 0,\infty\not\in \Orb_{\gamma}(P)\cup\Orb_\gamma(0)\Big\}.\]
\begin{proof}[(Proof of Theorem \ref{thm:primdiv})] For each $\phi\in S$, fix a representation $\phi(x)=\frac{F_\phi(x)}{G_\phi(x)}$ for some coprime polynomials $F_\phi,G_\phi\in\mathfrak{o}_K[x]$, and let \vspace{.1cm} 
\[\mathfrak{s}:=\big\{\frak{p}\,:\, v_\mathfrak{p}(\Res(F_\phi,G_\phi))\neq 0, \; \phi\in S\big\}; \vspace{.1cm}\] 
here $\Res(F_\phi,G_\phi)$ denotes the resultant of $F_\phi$ and $G_\phi$. In particular, $\mathfrak{s}$ is finite and $\gamma_m$ has good reduction at all $\mathfrak{p}\not\in\mathfrak{s}$ for all $m\geq1$ and all $\gamma\in\Phi_S$; see \cite[Theorem 2.15]{SilvDyn}. To ease notation, for a given sequences $\gamma=(\theta_n)_{n\geq1}$, write $F_n=F_{\theta_n}$ and $G_n=G_{\theta_n}$ for any $n\geq1$. Note that Proposition \ref{prop:abc} implies that for all $\epsilon>0$ there exists $C_{S,\epsilon}=\min\{C_{F_n,\epsilon}\}$ such that: 
\begin{equation}\label{abc1}
(\deg(F_n)-2-\epsilon)h(\gamma_{n-1}\cdot P)+C_{S,\epsilon}\;\leq\hspace{-.4cm}\sum_{\hspace{.2cm}v_\mathfrak{p}(F_n(\gamma_{n-1}\cdot P))>0}\hspace{-.8cm}N_\mathfrak{p}\;.
\end{equation} 
Here we use assumptions (1) and (2) of Theorem \ref{thm:primdiv} to deduce that $F_n$ is square-free and has degree at least $3$; see also Remark \ref{sqf}. Moreover, (\ref{abc1}) holds for all $P$, $\gamma$, and $n$. 

Now assume that $n\in\mathcal{Z}(\gamma,P)$. To study the prime ideals defining the right hand side of (\ref{abc1}), assume that $v_\mathfrak{p}(F_n(\gamma_{n-1}\cdot P))>0$. If in addition $\mathfrak{p}\not\in\mathfrak{s}$ and $v_\mathfrak{p}(\gamma_{n-1}\cdot P)\geq0$, then \vspace{.1cm}
\begin{equation}\label{abc2}
v_\mathfrak{p}(\gamma_{n}\cdot P)=v_\mathfrak{p}(F_n(\gamma_{n-1}\cdot P))-v_\mathfrak{p}(G_n(\gamma_{n-1}\cdot P))=v_\mathfrak{p}(F_n(\gamma_{n-1}\cdot P))>0 \vspace{.1cm}
\end{equation} 
by standard properties of the resultant; see \cite[Theorem 2.15]{SilvDyn}. On the other hand, we have assumed that $n\in\mathcal{Z}(\gamma,P)$, so that (\ref{abc2}) implies that $v_\mathfrak{p}(\gamma_{m}\cdot P)>0$ for some $1\leq m\leq n-1$. Moreover, \cite[Exercise 2.12]{SilvDyn} implies that all possible compositional combinations of elements of $S$ have good reduction at $\mathfrak{p}$, so that that \vspace{.1cm}
\begin{equation}\label{abc3} (\gamma_n\mysetminus\gamma_m)\cdot0\equiv(\gamma_n\mysetminus\gamma_m)\cdot(\gamma_m\cdot P)\equiv\gamma_n\cdot P\equiv 0\Mod{\mathfrak{p}}; \vspace{.1cm}
\end{equation}
see \cite[Theorem 2.18]{SilvDyn}. Therefore, (\ref{ht:lbd}), (\ref{abc1}) and (\ref{abc3}) together imply that
\begin{equation}{\label{abc4}}
\Scale[0.88]{
\begin{split} 
(\deg(F_n)-2-\epsilon)h(\gamma_{n-1}\cdot P)+C_{S,\epsilon}&\leq\underset{v_\mathfrak{p}(\gamma_m\cdot P)>0}{\sum_{m=1}^{\lfloor\frac{n}{2}\rfloor}}\hspace{-.4cm}N_\mathfrak{p}\;\;+\hspace{-.15cm}\underset{v_\mathfrak{p}((\gamma_n\mysetminus\gamma_m)\cdot 0)>0}{\sum_{m=\lfloor\frac{n}{2}\rfloor}^{n-1}}\hspace{-.7cm}N_\mathfrak{p}\;+\;\sum_{\mathfrak{p}\in \mathfrak{s}}N_\mathfrak{p}\;+\hspace{-.25cm}\sum_{\;\;v_\mathfrak{p}(\gamma_{n-1}\cdot P)<0}\hspace{-.6cm}N_\mathfrak{p} \\[6pt]
&\leq \sum_{m=1}^{\lfloor\frac{n}{2}\rfloor}h(\gamma_m\cdot P)+\sum_{m=\lfloor\frac{n}{2}\rfloor}^{n-1}\hspace{-.2cm}h((\gamma_n\mysetminus\gamma_m)\cdot 0) +h(\gamma_{n-1}\cdot P)+C_{S,2}. 
\end{split}} 
\end{equation}
This is similar to the bound in \cite[(15)]{AvgZig}. Moreover, it is here that we use that $\infty$ and $0$ are not in the orbit of $P$ and $0$, in order to apply (\ref{ht:lbd}). Now we use properties of canonical heights. Specifically, Lemma \ref{lem:particularcanonicalheight} and Remark \ref{rem:ht} together imply that 
\begin{equation}\label{abc5}
\Big|h(\rho_r\cdot Q)-\deg(\rho_r)\hat{h}_\rho(Q)\Big|\leq C_S
\end{equation} 
for all $r\geq1$, all $\rho\in\Phi_S$, and all $Q\in\mathbb{P}^1(\overline{K})$; here $C_S$ is the height control constant from Theorem \ref{thm:htsiid}. In particular, by letting $\epsilon=1/2$ and using the fact that $\deg(F_n)\geq4$, we deduce from (\ref{abc4}) and (\ref{abc5}) that
\begin{equation}\label{abc6}
\begin{split} 
\deg(\gamma_{n-1})\hat{h}_\gamma(P)\leq&\; 2\bigg(\hat{h}_\gamma(P)\sum_{m=1}^{\lfloor\frac{n}{2}\rfloor}\deg(\gamma_m)+\sum_{m=\lfloor\frac{n}{2}\rfloor}^{n-1}\deg(\gamma_n\mysetminus\gamma_m)\hat{h}_{\gamma\mysetminus\gamma_m}(0)\bigg)\\ 
&\;+C_{S,3}n+C_{S,4}.
\end{split} 
\end{equation} 
Therefore, after dividing both sides of (\ref{abc6}) by $\deg(\gamma_{n-1})\hat{h}_\gamma(P)$ and noting that $\hat{h}_{\gamma\mysetminus\gamma_m}(0)\leq C_S$ for all $m$, we achieve a bound of the form \vspace{.25cm} 
\begin{equation}\label{abc7}
\Scale[0.86]{
\begin{split}
1\leq &\; \frac{C_{S,5,\gamma,P}\Big(\sum_{m=1}^{\lfloor\frac{n}{2}\rfloor}\frac{\deg(\gamma_m)}{\deg(\gamma_{\lfloor n/2\rfloor})}\Big)}{\deg\big(\gamma_{n-1}\mysetminus\gamma_{\lfloor n/2\rfloor}\big)}+\frac{\deg(\theta_n)\,C_{S,6,\gamma,P}\Big(\sum_{m=\lfloor\frac{n}{2}\rfloor}^{n-1}\frac{\deg(\gamma_n\mysetminus\gamma_m)}{\deg(\gamma_n\mysetminus\gamma_{\lfloor n/2\rfloor})}\Big)}{\deg(\gamma_{\lfloor n/2\rfloor})}+\frac{C_{S,7,\gamma,P}\,n}{\deg(\gamma_{n-1})}+\frac{C_{S,8,\gamma,P}}{\deg(\gamma_{n-1})} \\[11pt] 
\leq&\;\frac{C_{S,9,\gamma,P}}{\deg\big(\gamma_{n-1}\mysetminus\gamma_{\lfloor n/2\rfloor}\big)} + \frac{C_{S,10,\gamma,P}}{\deg\big(\gamma_{\lfloor n/2\rfloor}\big)} +\frac{C_{S,7,\gamma,P}\,n}{\deg(\gamma_{n-1})}+\frac{C_{S,8,\gamma,P}}{\deg(\gamma_{n-1})}.\\[8pt]   
\end{split} }
\end{equation}
However, the right-hand side of (\ref{abc7}) goes to zero as $n$ grows. Hence, $n$ is bounded and $\mathcal{Z}(\gamma,P)$ is finite as claimed.    
\end{proof} 
\begin{remark}\label{sqf} Conditions (1) and (2) of Theorem \ref{thm:primdiv} are equivalent to writing $\phi(x)=\frac{F_\phi(x)}{G_\phi(x)}$ with $\disc(F_\phi)\neq0$ and $\deg(F_\phi)\geq4$ for all $\phi\in S$.  
\end{remark} 
We now study dynamical Zsigmondy sets when $K/\mathbb{F}_p(t)$ is a finite separable extension and $p$ is odd. To do this, we translate the proof of \cite[Theorem 1.1]{Riccati} to the non-autonomous setting and use properties of canonical heights from Section \ref{sec:hts}. For completeness, we remind the reader of the definition of the Weil height in this setting. Given a valuation $v$ on $K$ with residue field $k_v$ and local degree $N_v$ \cite[Definition 1.1.14]{ffields}, we define the height of $\alpha\in K^*$ to be  
\begin{equation}\label{htff} 
h(\alpha)=-\sum_{v} \min\{v(\alpha),0\}N_v=\sum_{v}\max\{v(\alpha),0\} N_v,
\end{equation}  
where the equality above follows from the fact that on a curve, the number of zeros of a non-constant function equals the number of poles when counted with multiplicity; see, for instance, \cite[Theorem 1.4.11]{ffields}.  
 
Before we begin our proof of Theorem \ref{thm:primdivff}, we record a few auxiliary results, including Lemma 2.1 from \cite{Riccati} stated below. However, given the technical nature of the material that follows, the reader is encouraged to keep in mind that our overall strategy (as in the proof of \cite[Theorem 1.1]{Riccati}) is to show that for any sufficiently long string $\psi_m\in\Phi_{S,m}$ there exists a root of $\psi_m$ that fails to satisfy a Riccati equation. Here a \emph{Riccati equation} is a first order differential equation of the form
\[y'=ay^2+by+c.\]
In what follows, $K^{\sep}$ denotes the separable closure of $K$. 
\begin{lemma}{\label{lem:unique}} Let $K/\mathbb{F}_p(t)$, let $\phi(x)\in K[x]$ have degree $d\geq3$, and write \vspace{.025cm} 
\[\phi(x)=A_0x^d+A_{1}x^{d-1}+\dots A_{d-1}x+A_d.\vspace{.075cm}\]    
If $d\in K^*$ and the quantity \[\delta_\phi:=2dA_0A_{2}-(d-1)A_{1}^2\vspace{.15cm}\]
is non-zero, then for all $\beta\in K^{\sep}$ such that $\beta$ and $\phi(\beta)$ both satisfy a Riccati equation, i.e.\vspace{.1cm} 
\begin{equation}{\label{Ricatti}} \beta'=a\beta^2+b\beta+c\;\;\;\text{and}\;\;\;\phi(\beta)'=e\phi(\beta)^2+f\phi(\beta)+g\vspace{.1cm}
\end{equation} 
for some $a,b,c,e,f,g\in K$, either \[[K(\beta):K]\leq2d,\] or the coefficients in (\ref{Ricatti}) are uniquely determined by $\phi$:  \vspace{.15cm} 
\begin{equation}{\label{solutions}}
\Scale[0.95]{
\begin{split} 
&\,a=0,\;\;\;\; b=(dA_0^2A_2' - (d-1)A_0A_1A_1' - dA_0A_2A_0' + (d-1)A_1^2A_0')/\delta_\phi ,\\[8pt] 
&\,e=0,\;\;\; f=(d^2A_0^2A_2' - d(d-1)A_0A_1A_1' - d(d-2)A_0A_2A_0' + (d(d-2)+1)A_1^2A_0')/\delta_\phi, \\[8pt] 
&\,c=(A_0A_1A_2' - 2A_0A_2A_1' + A_1A_2A_0')/\delta_\phi, \;\;\; g=A_{d-1}c-A_df+A_d'. \\[3pt]  
\end{split}}    
\end{equation} 
\end{lemma}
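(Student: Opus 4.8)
The plan is to treat the two Riccati equations in (\ref{Ricatti}) as a system of polynomial identities in the unknown coefficients $a,b,c,e,f,g$, and to extract from them enough $K$-linear relations to pin down the coefficients uniquely—unless $\beta$ has small degree over $K$. First I would substitute $\phi(\beta) = A_0\beta^d + \cdots + A_d$ into the second equation, and use the Leibniz rule together with the first equation $\beta' = a\beta^2 + b\beta + c$ to eliminate $\beta'$: since $\phi(\beta)' = \phi'(\beta)\beta' + \phi^{(\partial t)}(\beta)$ (the latter denoting the polynomial obtained by differentiating the coefficients $A_i$), substituting $\beta' = a\beta^2 + b\beta + c$ turns $\phi(\beta)'$ into an explicit polynomial in $\beta$ of degree $d+1$. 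Likewise $e\phi(\beta)^2 + f\phi(\beta) + g$ is a polynomial in $\beta$ of degree $2d$. Setting these equal gives a polynomial $R(\beta) = 0$ of degree $2d$ in $\beta$ with coefficients in $K$ that are polynomials in $a,b,c,e,f,g$ and the $A_i, A_i'$.

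The dichotomy then comes from the degree of $R$. If $R$ is not the zero polynomial, then $\beta$ is a root of a nonzero polynomial over $K$ of degree at most $2d$, whence $[K(\beta):K] \le 2d$, which is the first alternative. So I may assume $R \equiv 0$, i.e. every coefficient of $R$ (as a polynomial in $\beta$) vanishes. This yields a system of $2d+1$ equations in $K$. The key structural point is that the top-degree coefficients are the cleanest: the coefficient of $\beta^{2d}$ forces $e = 0$ (the only source of $\beta^{2d}$ is $e\phi(\beta)^2$, whose leading term is $eA_0^2\beta^{2d}$, and $A_0 \ne 0$). With $e = 0$ in hand, the surviving right-hand side has degree $d$, while the left-hand side $\phi'(\beta)(a\beta^2 + b\beta + c) + \phi^{(\partial t)}(\beta)$ has degree $d+1$ with leading coefficient $dA_0 a$; since $d \in K^*$ and $A_0 \ne 0$, the coefficient of $\beta^{d+1}$ forces $a = 0$. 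Now both sides have degree $d$, and I compare the coefficients of $\beta^d, \beta^{d-1}, \ldots$ in order. The coefficient of $\beta^d$ will be an identity automatically (both sides contribute $A_0'$-type terms consistently, or it will determine nothing new); the coefficient of $\beta^{d-1}$ gives a linear equation in $b$; the coefficient of $\beta^{d-2}$ gives a linear relation involving $b$, $c$, and $f$; and so on. The hypothesis $\delta_\phi = 2dA_0A_2 - (d-1)A_1^2 \ne 0$ is exactly the nonvanishing of the determinant of the relevant $2\times 2$ (or small) linear subsystem in the unknowns $b$ and $f$ (and then $c$), so that these are solved uniquely, giving the stated formulas. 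Finally $g$ is read off from the constant term: $g = A_{d-1}c - A_d f + A_d'$.

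The main obstacle I anticipate is purely bookkeeping: correctly expanding $\phi'(\beta)(b\beta + c) + \phi^{(\partial t)}(\beta)$ and matching it against $f\phi(\beta) + g$ coefficient by coefficient through the top three or four degrees, and verifying that the resulting small linear system has coefficient matrix with determinant a unit multiple of $\delta_\phi$. The characteristic-$p$ subtlety—that $d \in K^*$ is needed so that $\deg \phi'$ is genuinely $d-1$ and the leading coefficients don't collapse—has to be tracked carefully, since if $p \mid d$ the argument forcing $a = 0$ from the $\beta^{d+1}$ coefficient can fail. I would also double-check that differentiating the coefficients (implicit differentiation extending $d/dt$ to $K^{\sep}$) commutes with the substitution in the expected way, which it does since $\beta \in K^{\sep}$ and the derivation extends uniquely. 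Everything else is a finite, deterministic computation, and the formulas in (\ref{solutions}) are then just the solution of that linear system.
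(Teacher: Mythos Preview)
The paper does not actually prove this lemma; it is quoted as Lemma~2.1 of \cite{Riccati} and only used as a black box. Your approach is correct and is the natural one (and almost certainly the one in \cite{Riccati}): set
\[
R(x)\;=\;e\,\phi(x)^2+f\,\phi(x)+g\;-\;\phi^{(\partial t)}(x)\;-\;\phi'(x)\,(ax^2+bx+c),
\]
note that $R(\beta)=0$, and split on whether $R\equiv 0$. If $R\not\equiv 0$ you get $[K(\beta):K]\le 2d$; if $R\equiv 0$, the coefficient of $\beta^{2d}$ forces $e=0$, then the coefficient of $\beta^{d+1}$ forces $a=0$ (using $d\in K^{*}$ and $A_0\ne 0$), and the coefficients of $\beta^{d},\beta^{d-1},\beta^{d-2}$ give a $3\times 3$ linear system in $(b,c,f)$ whose determinant is $A_0\delta_\phi\ne 0$, with $g$ then read off from the constant term.

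One small correction to your sketch: the $\beta^{d}$ coefficient is \emph{not} automatic---it gives the relation $A_0'+dA_0 b=fA_0$, which is one of the three equations you need to close the system (and the $\beta^{d-1}$ coefficient involves $b,c,f$, not $b$ alone). With that adjustment your outline is complete.
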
 
We gather another result that uses our technical assumptions from Theorem \ref{thm:primdivff}. In particular, we show that not all roots of sufficiently long strings of elements of $S$ can satisfy a Riccati equation over $K$. It is in this step especially where we encounter the subtlety of iterating multiple maps.    
\begin{lemma}\label{substring} Let $K/\mathbb{F}_p(t)$ be a function field and suppose that $S=\{\phi_1,\phi_2,\dots, \phi_s\}$ satisfies the hypothesis of Theorem \ref{thm:primdivff}. 
Then there exists a constant $M=M(S)$ such that for any string $\psi_M=(\theta_n)_{n=1}^M\in\Phi_{S,M}$ of length $M$ there is an integer $0\leq m_\psi\leq 2$ and a root $\beta$ of the substring $(\psi_M\mysetminus\psi_{m_\psi})=(\theta_n)_{n=m_\psi+1}^M$ that satisfies: 
\begin{enumerate}[topsep=8pt, partopsep=8pt, itemsep=10pt] 
\item[\textup{(a)}] $\beta\in K^{\sep}$, 
\item[\textup{(b)}] $[K(\beta):K]>2d$, 
\item[\textup{(c)}] $\beta$ does not satisfy a Riccati equation over $K$. 
\end{enumerate} 
\end{lemma}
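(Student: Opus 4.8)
\textbf{Proof proposal for Lemma \ref{substring}.}

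The plan is to leverage Lemma \ref{lem:unique} compositionally: for a single polynomial $\phi$ of degree $d\geq 3$, the coefficients of any Riccati equation satisfied simultaneously by $\beta$ and $\phi(\beta)$ are rigidly determined by $\phi$ (assuming $[K(\beta):K]>2d$), and in particular the ``incoming'' coefficient $b$ at $\beta$ equals $b_\phi$ while the ``outgoing'' coefficient $f$ at $\phi(\beta)$ equals $f_\phi$. Now suppose, for contradiction, that along a string $\psi_M=(\theta_n)_{n=1}^M$ \emph{every} root $\beta$ of $\psi_M$ (and of each tail $\psi_M\mysetminus\psi_{m}$, $m=0,1,2$) satisfies (a)–(c) \emph{failing}, i.e., some root satisfying (a) and (b) does satisfy a Riccati equation over $K$. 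Pick such a root $\beta=\beta_M$ of the full string $\psi_M$ and chase its forward images $\beta_j:=(\theta_j\circ\cdots\circ\theta_1)(\beta_M)$ for $j=0,1,\dots,M$; each consecutive pair $\beta_{j-1},\beta_j=\theta_j(\beta_{j-1})$ is related by the map $\theta_j$, and if $[K(\beta_{j-1}):K]>2\deg(\theta_j)$ then Lemma \ref{lem:unique} forces the Riccati coefficient at $\beta_{j-1}$ to be $(a,b)=(0,b_{\theta_j})$ and the coefficient at $\beta_j$ to be $(e,f)=(0,f_{\theta_j})$. But $\beta_j$ also feeds into $\theta_{j+1}$, and applying Lemma \ref{lem:unique} to the pair $(\beta_j,\theta_{j+1}(\beta_j))$ forces the ``incoming'' coefficient at $\beta_j$ to be $b_{\theta_{j+1}}$. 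Matching the two expressions for the linear Riccati coefficient at the common point $\beta_j$ yields $f_{\theta_j}=b_{\theta_{j+1}}$, which is exactly forbidden by hypothesis (1) of Theorem \ref{thm:primdivff} (since $\deg(\theta_j)\deg(\theta_{j+1})\delta_{\theta_j}\delta_{\theta_{j+1}}(b_{\theta_{j+1}}-f_{\theta_j})\neq0$ forces $b_{\theta_{j+1}}\neq f_{\theta_j}$, and also guarantees $\delta_{\theta_j},\delta_{\theta_{j+1}}\neq0$ and $\deg(\theta_j),\deg(\theta_{j+1})\in K^*$ so that Lemma \ref{lem:unique} applies). This gives the contradiction — \emph{provided} the degree condition $[K(\beta_{j-1}):K]>2\deg(\theta_j)$ holds at two consecutive indices $j,j+1$, which is where the real work lies.

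The main obstacle is therefore controlling the degrees $[K(\beta_j):K]$. Here the plan is: start from a root $\beta$ of a tail $\psi_M\mysetminus\psi_{m_\psi}$ rather than of the full string, and choose $m_\psi\in\{0,1,2\}$ so as to dodge the finitely many bad scenarios. The point is that $\deg(\psi_M\mysetminus\psi_{m_\psi})=\prod_{n>m_\psi}\deg(\theta_n)$ is enormous once $M$ is large (each factor is $\geq3$), so a root $\beta$ of this polynomial has huge degree over $K$ \emph{unless} the polynomial is highly non-generic — e.g. if $\psi_M\mysetminus\psi_{m_\psi}$ shares a common root with a low-degree factor, or is a $p$-th power (inseparability). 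To handle separability (condition (a)): the composite $\psi_M\mysetminus\psi_m$ is separable as soon as each $\theta_n$ is, and $\theta_n$ is separable iff it is not a polynomial in $x^p$; if some $\theta_n\in S$ were inseparable one checks $\delta_{\theta_n}=0$ (the relevant coefficients $A_1,A_2$ or derivatives vanish), contradicting hypothesis (1). So every map in $S$ is separable, hence so is every composition, giving (a) automatically; then (b) holds for all but $O_S(1)$ of the roots of $\psi_M\mysetminus\psi_m$ once $M$ is large. Concretely: the number of roots $\beta$ of $\psi_M\mysetminus\psi_{m}$ with $[K(\beta):K]\leq 2d$ (where $d:=\max_{\phi\in S}\deg\phi$) is at most $2d\cdot\#\{$such subfields$\}$, a bound depending only on $S$, hence negligible compared to $\deg(\psi_M\mysetminus\psi_m)\to\infty$.

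Assembling the pieces: fix $d=\max_{\phi\in S}\deg\phi$ and let $M=M(S)$ be large enough that for every tail of length $\geq M-2$ the number of ``small-degree'' roots is strictly smaller than the total number of roots — so a root $\beta$ with $[K(\beta):K]>2d$ exists — and moreover large enough that, tracing the forward orbit $\beta=\beta_{m_\psi},\beta_{m_\psi+1},\dots,\beta_M$, the degrees $[K(\beta_j):K]$ stay above $2d$ for at least the first two steps (this uses that $[K(\beta_{j}):K]$ can only drop by a bounded factor $\le \deg(\theta_{j+1})\le d$ at each step since $\beta_{j+1}=\theta_{j+1}(\beta_j)\in K(\beta_j)$ forces $[K(\beta_j):K]\le \deg(\theta_{j+1})[K(\beta_{j+1}):K]$, not the other way — so in fact $[K(\beta_j):K]$ is \emph{non-decreasing up to factors}, and the starting bound being large propagates). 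Now run the Lemma \ref{lem:unique} argument at indices $j=m_\psi$ and $j=m_\psi+1$ as above: if the chosen root $\beta$ satisfied a Riccati equation over $K$, matching coefficients at $\beta_{m_\psi+1}$ yields $f_{\theta_{m_\psi+1}}=b_{\theta_{m_\psi+2}}$, contradicting hypothesis (1). Hence $\beta$ satisfies (a), (b), (c), which is the claim. I expect that the bookkeeping needed to make the ``degrees stay large for two consecutive steps'' argument precise — in particular ruling out the degenerate possibility that a forward image $\theta_{j+1}(\beta_j)$ collapses into a small-degree subfield — is the delicate point, and it is exactly for this reason that we are allowed to shift the basepoint index $m_\psi$ over the three values $0,1,2$, absorbing the finitely many exceptional configurations.
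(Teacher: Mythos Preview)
Your overall strategy is correct and matches the paper: pick a separable root $\beta_M$ of $\psi_M$, track its first two forward images $\theta_1(\beta_M)$ and $\theta_2(\theta_1(\beta_M))$, and observe that if all three satisfy Riccati equations over $K$ then two applications of Lemma \ref{lem:unique} force $f_{\theta_1}=b_{\theta_2}$, contradicting hypothesis~(1). Hence at least one of the three fails (c), and that one is your $\beta$ with $m_\psi\in\{0,1,2\}$. Your separability argument for (a) is also fine.

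The genuine gap is your argument for condition (b). You assert that the number of roots of $\psi_M\mysetminus\psi_m$ with $[K(\beta):K]\leq 2d$ is at most ``$2d\cdot\#\{\text{such subfields}\}$, a bound depending only on $S$'', but this is not justified: there are infinitely many extensions of $K$ of degree $\leq 2d$, and nothing you have written prevents $\psi_M\mysetminus\psi_m$ from factoring entirely into low-degree irreducible pieces. More tellingly, you never invoke hypothesis~(2) of Theorem \ref{thm:primdivff}, namely $0\notin\PrePer(\Phi_S)$, and without it (b) can genuinely fail (if $0$ is preperiodic for some sequence, arbitrarily long strings can have roots of bounded degree).

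The paper secures (b) via canonical heights. If $\alpha$ is a root of a string $\psi_m$ with $[K(\alpha):K]\leq 2d$, then $\psi_m(\alpha)=0\notin\PrePer(\Phi_S)$ forces $\alpha\notin\PrePer(\Phi_S)$, so Lemma \ref{lem:htmin} gives $\hat{h}_\psi(\alpha)\geq \hat{h}_{\Phi_S,K}^{\min}(2d)>0$ for any extension $\psi\in\Phi_S$ of $\psi_m$. Then Remark \ref{rem:ht} and Lemma \ref{lem:particularcanonicalheight} yield
\[
C_S\;\geq\;\hat{h}_{\psi\mysetminus\psi_m}(0)\;=\;\deg(\psi_m)\,\hat{h}_\psi(\alpha)\;\geq\;3^m\,\hat{h}_{\Phi_S,K}^{\min}(2d),
\]
so $m$ is bounded purely in terms of $S$. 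Choosing $M$ larger than this bound plus $2$ guarantees that \emph{every} root of \emph{every} string of length $\geq M-2$ has degree $>2d$ over $K$; in particular all three candidates $\beta_M$, $\theta_1(\beta_M)$, $\theta_2(\theta_1(\beta_M))$ automatically satisfy (b). Thus the freedom $m_\psi\in\{0,1,2\}$ exists solely to secure (c), not to ``absorb exceptional configurations'' for (b) as you suggest.
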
 
\begin{proof} To find the pair $m_\psi$ and $\beta$, let $d:=\max\{\deg(\phi): \phi\in S\}$ and note that \vspace{.05cm} 
\begin{equation*}{\label{htmin}} \hat{h}_{\Phi_S,K}^{\min}(2d):=\inf\Big\{\,\hat{h}_\psi(\alpha)\,:\; [K(\alpha):K]\leq2d,\;\alpha\not\in\PrePer(\Phi_S),\; \psi\in\Phi_S \Big\} \vspace{.05cm} 
\end{equation*} 
is a positive number; this follows from Lemma \ref{lem:htmin} below. Now let $m\geq1$ and suppose that $\alpha\in\mathbb{P}^1(\overline{K})$ is a root of a string $\psi_m\in\Phi_{S,m}$ and that $[K(\alpha):K]\leq2d$. Extend $\psi_m$ to some infinite sequence $\psi\in\Phi_S$, and note that Remark \ref{rem:ht} implies that \vspace{.05cm}
\begin{equation}\label{globalbd1} 
\hat{h}_{\psi\mysetminus\psi_m}(0)=\hat{h}_{\psi\mysetminus\psi_m}(\psi_m(\alpha))=\deg(\psi_m)\,\hat{h}_\psi(\alpha)\geq\deg(\psi_m)\,\hat{h}_{\Phi_S,K}^{\min}(2d). \vspace{.05cm}
\end{equation} 
Here we use our assumption that $0\not\in\PrePer(\Phi_S)$, so that $\alpha\not\in\PrePer(\Phi_S)$ also. Now let $C_S$ be the height control constant from Theorem \ref{thm:htsiid}, and note that Lemma \ref{lem:particularcanonicalheight} implies that $\hat{h}_{\psi\mysetminus\psi_m}(0)\leq C_S$, independent of the extension $\psi$ of $\psi_m$. Therefore, in light of (\ref{globalbd1}), we define the constant   
\begin{equation}\label{globalbd2}
\boxed{M:=\log_3^+\big(C_S\big/\hat{h}_{\Phi_S,K}^{\min}(2d)\big)+3},
\end{equation}  
where $\log_3^+(z)=\max\{\log_3(z),0\}$. To see that $M$ has the desired properties, let $\psi_M\in\Phi_{S,M}$ be any string. Note first that we may choose a separable root $\beta_M$ of $\psi_M$: for if every root of $\psi_M$ is inseparable, then $\deg(\psi_M)$ is divisible by $\car(K)$; see \cite[\S13.5 Proposition 38]{DF}. However, this contradicts assumption (1) of Theorem \ref{thm:primdivff}. Now write the string $\psi_M=(\theta_n)_{n=1}^M$ explicitly. Then we claim that $(m_\psi,\beta)$ in Lemma \ref{substring} can be chosen to be one of the pairs below: 
\[(m_\psi,\beta)=\big(0,\, \beta_M\big),\hspace{.2cm} \big(1,\,\theta_{1}(\beta_M)\big)\hspace{.15cm} \text{or}\hspace{.15cm} \big(2,\,\theta_{2}(\theta_{1}(\beta_M))\big).\vspace{.05cm}\]    
To see this, note that condition (a) of Lemma \ref{substring} is easily satisfied: since $\beta_M$ is separable, both $\theta_{1}(\beta_M)$ and $\theta_{2}(\theta_{1}(\beta_M))$ are also separable. On the other hand, the relative degrees $\big[K(\beta_M):K\big]$, $\big[K(\theta_{1}(\beta_M)):K\big]$ and $\big[K(\theta_{2}(\theta_{1}(\beta_M))):K\big]$ are all at least $2d$ by (\ref{globalbd1}) and the definition of $M$ in (\ref{globalbd2}). Therefore, it remains to show that at least one of the algebraic functions $\beta$, $\theta_1(\beta_M)$ or $\theta_2(\theta_1(\beta_M))$ does not satisfy a Riccati equation over $K$. Suppose, for a contradiction, that all three satisfy a Riccati equation over $K$. Then Lemma \ref{lem:unique} applied separately to the point $\beta=\beta_M$ with the map $\phi=\theta_1$ and the point $\beta=\theta_1(\beta_M)$ with the map $\phi=\theta_2$ implies that \vspace{.05cm}
\begin{equation} 
\begin{split} 
\beta'_M=b_1\beta_M+c_1\;\;\;\;&\text{and}\;\;\;\;\theta_1(\beta_M)'=f_1\theta_1(\beta_M)+g_1,\\[3pt]
\theta_1(\beta_M)'=b_2\theta_1(\beta_M)+c_2\;\;\;\;&\text{and}\;\;\;\;\theta_2(\theta_1(\beta))'=f_2\theta_2(\theta_1(\beta))+g_2\\[3pt] 
\end{split} 
\end{equation}  
for the unique solutions $(b_1,c_1,f_1,g_1)\in K^4$ and $(b_2,c_2,f_2,g_2)\in K^4$ in (\ref{solutions}) corresponding to $\phi=\theta_1$ and $\phi=\theta_2$ respectively. In particular, $0=(b_2-f_1)\,\theta_1(\beta_M)+(c_2-g_1)$. But $[K(\theta_1(\beta_M)):K]>2d$, so that $(b_2-f_1)=0$. However, this again contradicts condition (1) of Theorem \ref{thm:primdivff}.               
\end{proof}  
The importance of Riccati equations to the arithmetic of function fields is partially explained by their ability to detect the isotriviality of hyperelliptic curves \cite[Lemma 2.2]{Riccati}.  
\begin{lemma}{\label{lem:RicattitoIso}} Let $K/\mathbb{F}_p(t)$ and suppose that $\rho(x)\in K[x]$ is an irreducible (and separable) polynomial of degree $d\geq5$. If $\beta\in K^{\sep}$ is such that $\rho(\beta)=0$ and $\beta$ does not satisfy a Riccati equation over $K$, i.e. 
\[\beta'\neq a\beta^2+b\beta+c\]
for all $a,b,c\in K$, then the hyperelliptic curve\, $C: Y^2=\rho(X)$ is non-isotrivial.   
\end{lemma}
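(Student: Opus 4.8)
The plan is to prove the contrapositive: \emph{if $C\colon Y^2=\rho(X)$ is isotrivial, then $\beta$ satisfies a Riccati equation over $K$.} Write $'$ for the unique extension of $\frac{d}{dt}$ from $K$ to $K^{\sep}$; it exists because $K^{\sep}/K$ is separable algebraic, and for the same reason it commutes with every $K$-automorphism of $K^{\sep}$. I first record an \emph{unconditional} observation. Since $\rho$ is irreducible and separable of degree $d$, its roots $\beta=\beta_1,\dots,\beta_d\in K^{\sep}$ are distinct; moreover $\beta'\in K(\beta)$, so there is a unique polynomial $Q(X)\in K[X]$ of degree $\le d-1$ with $\beta'=Q(\beta)$ (the coordinates of $\beta'$ in the $K$-basis $1,\beta,\dots,\beta^{d-1}$ of $K(\beta)$). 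Applying the $K$-automorphism that sends $\beta$ to $\beta_i$, and using that $'$ commutes with it and fixes the coefficients of $Q$, we obtain $\beta_i'=Q(\beta_i)$ with the \emph{same} polynomial $Q$, for every $i$. So it suffices to prove, under the isotriviality hypothesis, that $\deg Q\le 2$.

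Next I extract a Möbius transformation from isotriviality. Because $\deg\rho=d\ge 5$, the genus of $C$ is $g=\lfloor(d-1)/2\rfloor\ge 2$, so the degree-two map $\pi\colon C\to\mathbb{P}^1$ given by the $X$-coordinate is the unique $g^1_2$ on $C$; being unique it is preserved by every isomorphism out of $C$, which therefore descends to an isomorphism of the respective $\mathbb{P}^1$'s compatible with the hyperelliptic maps and hence carrying branch points to branch points. Isotriviality gives a curve $C_0$ over $\overline{\mathbb{F}}_p$ and an isomorphism $C\otimes_K\overline{K}\cong C_0\otimes_{\overline{\mathbb{F}}_p}\overline{K}$, and $C_0$ is again hyperelliptic, say $C_0\colon Y_0^2=\rho_0(X_0)$ with $\rho_0\in\overline{\mathbb{F}}_p[X_0]$. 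Hence there is $\tau\in\PGL_2(\overline{K})$ carrying the branch locus $\{\beta_1,\dots,\beta_d\}$ (together with $\infty$ when $d$ is odd) bijectively onto the branch locus of $C_0$, which is contained in $\mathbb{P}^1(\overline{\mathbb{F}}_p)$. Post-composing $\tau$ with a constant Möbius transformation $\sigma$ chosen so that $\sigma^{-1}(\infty)\notin\{\tau(\beta_1),\dots,\tau(\beta_d)\}$ — possible since $\overline{\mathbb{F}}_p$ is infinite — I may assume $\tau(\beta_i)=\kappa_i\in\overline{\mathbb{F}}_p$ (a genuine constant, never $\infty$) for every $i$. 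Finally, $\tau$ is determined by its values on any three of the $\beta_i$ (here $d\ge 3$), so by the cross-ratio formula its coefficients lie in the subfield of $K^{\sep}$ generated over $K$ by those three $\beta_i$ and the constants $\kappa_i\in\overline{\mathbb{F}}_p\subseteq K^{\sep}$. I therefore take $\tau(X)=\frac{aX+b}{cX+e}$ with $a,b,c,e\in K^{\sep}$, normalized so that $ae-bc=1$.

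Now differentiate. From $\kappa_i(c\beta_i+e)=a\beta_i+b$ and $\kappa_i'=0$, clearing denominators and collecting the terms in $\beta_i'$ (whose coefficient simplifies to $ae-bc=1$), one obtains
\[
\beta_i'=N(\beta_i),\qquad N(X):=(aX+b)(c'X+e')-(a'X+b')(cX+e)\in K^{\sep}[X],
\]
a fixed polynomial of degree $\le 2$, independent of $i$. Comparing with $\beta_i'=Q(\beta_i)$ from the first paragraph, the polynomial $Q-N\in K^{\sep}[X]$ has degree $\le\max(d-1,2)=d-1$ and vanishes at the $d$ distinct points $\beta_1,\dots,\beta_d$, so $Q=N$; in particular $\deg Q\le 2$. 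Since $Q\in K[X]$, this says $\beta'=q_2\beta^2+q_1\beta+q_0$ with $q_0,q_1,q_2\in K$, i.e.\ $\beta$ satisfies a Riccati equation over $K$, which proves the lemma by contraposition.

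The step I expect to require the most care is the second paragraph: one needs the rigidity of the hyperelliptic structure in genus $\ge 2$ (so that an abstract isomorphism to a constant curve really does produce a Möbius transformation taking branch points to branch points), and one must ensure that this transformation — hence $N$ — is defined over $K^{\sep}$ rather than merely $\overline{K}$, so that $\frac{d}{dt}$ can legitimately be applied to its coefficients. Reconstructing $\tau$ from three $K^{\sep}$-rational branch points resolves this, but it relies on the bookkeeping that pushes every $\tau(\beta_i)$ away from $\infty$, where the argument would otherwise break; everything else is routine linear algebra together with a count of roots.
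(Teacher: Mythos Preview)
The paper does not supply a proof of this lemma; it merely quotes it as \cite[Lemma 2.2]{Riccati}. So there is no in-paper argument to compare against, and your task is really to give a free-standing proof---which you have done correctly.

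Your strategy (contrapositive: isotrivial $\Rightarrow$ Riccati) is exactly the natural one, and the execution is sound. The two nontrivial ingredients are handled properly: (i) for $g\ge 2$ the hyperelliptic double cover is the unique $g^1_2$, so any isomorphism $C_{\overline K}\cong (C_0)_{\overline K}$ with $C_0/\overline{\mathbb F}_p$ descends to a M\"obius map carrying the branch locus $\{\beta_1,\dots,\beta_d\}$ (and $\infty$ when $d$ is odd) to a set of constants; and (ii) reconstructing $\tau$ from three of its values forces its coefficients into $K^{\sep}$, since $\beta_i\in K^{\sep}$ and $\overline{\mathbb F}_p\subset K^{\sep}$ (finite fields being perfect). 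The post-composition step to keep all $\tau(\beta_i)$ finite is necessary and correctly justified. The differentiation is clean once you normalize $ae-bc=1$, which is legitimate over $K^{\sep}$ in odd characteristic since square roots are separable; and the final interpolation argument ($Q-N$ of degree $\le d-1$ with $d$ distinct roots, hence $Q=N\in K[X]$ of degree $\le 2$) is airtight.

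One cosmetic remark: you invoke $d\ge 5$ only to secure $g\ge 2$, whereas the vanishing argument $Q=N$ needs merely $d\ge 3$; it may be worth saying explicitly that the genus bound is the sole place where the hypothesis $d\ge 5$ enters.
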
  
As a final preparation for the proof of Theorem \ref{thm:primdivff}, we replace Proposition \ref{prop:abc} with effective forms of the Mordell conjecture over global function fields \cite{Kim,htineq,Szpiro}. 
\begin{theorem}\label{thm:Mordell}{(Effective Mordell Conjecture)} Let $X$ be a non-isotrivial curve of genus at least $2$ defined over a function field $K$ of characteristic $p>0$. Then there are constants $B_{X,1}$ and $B_{X,2}$ depending on $X$ such that for all $Q\in X(\overline{K})$, 
\[h_{\kappa_X}(Q)\leq B_{X,1}\,d(Q)+ B_{X,2};\]
here $h_{\kappa_X}$ is a height function with respect to the canonical divisor $\kappa_X$ of $X$ and 
\[d(Q)=\frac{2\gen(K(Q))-2}{[K(Q):K]}\] 
is the relative discriminant of the extension $K(Q)/K$.  
\end{theorem}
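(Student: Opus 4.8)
The plan is to \emph{import} this inequality rather than reprove it: it is an effective form of the Mordell conjecture over global function fields, due to the work recorded in \cite{Kim}, \cite{htineq}, and \cite{Szpiro}. For orientation I sketch the geometric mechanism and indicate where the hypotheses enter. Write $K=k(B)$ with $k$ the constant field and $B$ a smooth projective curve over $k$, and choose a minimal model $f\colon\mathcal{X}\to B$ (semistable after a finite base change) of the smooth projective curve $X_{/K}$ of genus $\gen(X)\geq2$. The hypothesis that $X$ is non-isotrivial says precisely that $f$ is not a constant family even after pullback; equivalently the Kodaira--Spencer map of $f$ is nonzero, which forces the relative dualizing sheaf $\omega_{\mathcal{X}/B}$ to have strictly positive self-intersection on the surface $\mathcal{X}$. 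This positivity is the input that makes a bound of the stated shape possible, and its failure for isotrivial $X$ is why the hypothesis cannot be dropped.

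First I would translate the statement into intersection theory on a fibered surface. A point $Q\in X(\overline{K})$ with field of definition $L=K(Q)$ corresponds, on passing to the smooth projective model $B'$ of $L$ --- so that $\gen(B')=\gen(K(Q))$ and $[B':B]=[L:K]$ --- to a section $\sigma_Q\colon B'\to\mathcal{X}_{B'}$ of the pulled-back family, where $\mathcal{X}_{B'}$ is a minimal model of $\mathcal{X}\times_B B'$. Since the ample class $\kappa_X$ has positive degree $2\gen(X)-2$ and $\omega_{\mathcal{X}/B}$ is a model of it, functoriality of Weil heights identifies $[L:K]\cdot h_{\kappa_X}(Q)$ with the intersection number $\sigma_Q\cdot\omega_{\mathcal{X}_{B'}/B'}$ up to correction terms supported on the finitely many bad fibers of $f$ and on the fibers over the branch locus of $B'\to B$; after dividing by $[L:K]$ these corrections have the same linear-in-$d(Q)$ shape as the final bound and so may be absorbed into its constants.

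The next and main step is a \emph{tautological (height) inequality} of Vojta--Szpiro type on the fibered surface $\mathcal{X}_{B'}\to B'$: for a non-isotrivial semistable fibration of curves of genus $\geq2$, the intersection of a section with the relative dualizing sheaf is bounded by a fixed multiple of $2\gen(B')-2$, plus a contribution from the bad fibers which --- after the base change $B'\to B$ --- is linear in $[B':B]$ with coefficient depending only on $f$; symbolically,
\[ \sigma_Q\cdot\omega_{\mathcal{X}_{B'}/B'}\;\leq\;c_1\,\big(2\gen(B')-2\big)+c_2\,[L:K], \]
with $c_1,c_2$ depending only on $\mathcal{X}\to B$. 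Dividing by $[L:K]$ and recalling that $d(Q)=(2\gen(K(Q))-2)/[K(Q):K]$ then yields $h_{\kappa_X}(Q)\leq B_{X,1}\,d(Q)+B_{X,2}$. I expect the genuinely hard part to be the effective height inequality in characteristic $p$: one must bound each bad fiber's contribution uniformly, control sections passing through singular points of the model, and --- a phenomenon special to positive characteristic --- handle inseparability and wild ramification in the cover $B'/B$, so that the arithmetic discriminant $d(Q)$, rather than a naive count of branch points, is the correct quantity. This is carried out in \cite{Kim} via purely inseparable descent and an analysis of the semistable model, and I would simply cite that treatment. For our purposes only the shape of the bound is used: it will be applied to the hyperelliptic curves $C\colon Y^{2}=\rho(X)$ produced by Lemma \ref{lem:RicattitoIso}, which have genus $\geq2$ once $\deg\rho\geq5$ and are non-isotrivial precisely in the situation where that lemma applies.
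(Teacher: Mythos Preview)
Your proposal contains a genuine gap, and it is precisely the point the paper's own proof isolates. You assert that ``$X$ non-isotrivial'' is \emph{equivalent} to ``the Kodaira--Spencer map of $f$ is nonzero.'' In characteristic $0$ this is correct, but in characteristic $p>0$ it fails: a family can be non-isotrivial and still have vanishing Kodaira--Spencer class (for instance when it is obtained from another family by Frobenius pullback). The effective height bounds in \cite{Kim}, \cite{htineq}, and \cite{Szpiro} are proved only under the stronger hypothesis that the Kodaira--Spencer class is nonzero, so your direct citation does not cover the statement as written.

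The paper's proof supplies exactly the missing reduction: given a non-isotrivial $X$ over $K$, one finds a power $r$ of $p$ and a separable extension $L/K$ such that $X$ is defined over $L^r$ and has nonzero Kodaira--Spencer class there (see \cite[Appendix]{Voloch-Survey}, \cite[Theorem 5]{Kim-notes}, or \cite[Theorem 2]{Voloch-Survey}). Applying the cited height inequalities to $X$ over $L^r$ then yields the bound of the required shape over $K$, after tracking how $h_{\kappa_X}$ and $d(Q)$ behave under this base change. Apart from this point, the geometric mechanism you sketch (sections of the fibered surface, intersection with $\omega_{\mathcal{X}/B}$, bounding by $2\gen(B')-2$ and bad-fiber contributions) is the right picture and matches what the references actually do; you simply need to insert the Frobenius-descent step before invoking them.
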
 
\begin{proof} Strictly speaking, the bounds in \cite{Kim,htineq,Szpiro} are for curves with non-zero Kodaira-Spencer class. However, the non-isotrivial case follows from this one; see \cite[Theorem 5]{Kim-notes} or \cite[Theorem 2]{Voloch-Survey}. For if $X$ is a non-isotrivial curve, then there is an $r$ (a power of $p$) and a separable extension $L/K$ such that $X$ is defined over $L^r$ and that the Kodaira-Spencer class of $X$ over $L^r$ is non-zero \cite[Appendix]{Voloch-Survey}. Now, if we apply any of the bounds in \cite{Kim,htineq,Szpiro} to $X$ over $L^r$, then we achieve the bound in Theorem \ref{thm:Mordell}. 
\end{proof} 
Having completed the requisite preparations, we can now prove the finiteness of some polynomial Zsigmondy sets over global function fields. 
\begin{proof}[(Proof of Theorem \ref{thm:primdivff})] Let $\gamma\in\Phi_S$, let $P\in\mathbb{P}^1(K)$ be such that $\hat{h}_\gamma(P)>0$, and suppose that $n\in\mathcal{Z}(\gamma,P)$. Without loss of generality, we may assume that  $n>M$; see (\ref{globalbd2}) for a definition of $M$. Write $\gamma_n=(\theta_m)_{m=1}^n$ so that 
\[\gamma_n=\psi_M(\gamma)\circ\gamma_{n-M},\;\;\;\text{where}\;\;\;\psi_{M}(\gamma):=(\theta_m)_{m=n-M+1}^n\;\;\; \text{and}\;\;\;\gamma_{n-M}:=(\theta_m)_{m=1}^{n-M}.\] 
Now since $\psi_M(\gamma)$ is a string of length $M$, Lemma \ref{substring} implies that $\psi_M(\gamma)=\rho_M(\gamma)\circ\psi_{m_\psi}(\gamma)$, where $\psi_{m_\psi}$ is a string of length $0$, $1$ or $2$ and $\rho_M(\gamma)$ has a root $\beta$ satisfying conditions (1)-(3) of Lemma \ref{substring}. In particular, the polynomial $\rho_M(\gamma)$ has a factorization 
\[\rho_M(\gamma)=f_{\gamma,M,1}(x)^{e_1}f_{\gamma,M,2}(x)^{e_2}\dots f_{\gamma,M,r}(x)^{e_r},\]
satisfying: the $f_{\gamma,M,i}\in K[x]$ are distinct and irreducible, $\deg(f_{\gamma,M,1})\geq6$, and $f_{\gamma,M,1}$ has a separable root $\beta_{M,\gamma}$ that does not satisfy a Riccati equation over $K$. In particular, the hyperelliptic curve 
\begin{equation*}\label{specialcurve} \mathcal{C}_{\gamma,M}:\; Y^2=f_{\gamma,M,1}(X) 
\end{equation*}     
is non-isotrivial by Lemma \ref{lem:RicattitoIso}. Moreover, there are only finitely many such curves $\mathcal{C}_{\gamma,M}$ to consider since $\rho_M(\gamma)\in\Phi_{S, M-i}$ for some $0\leq i\leq2$. In particular, there are uniform height bound constants 
\[\mathcal{B}_{S,1}:=\max_{\gamma\in\Phi_S}\{B_{\mathcal{C}_{\gamma,M},1}\}\hspace{.5cm}\text{and}\hspace{.5cm} \mathcal{B}_{S,2}:=\max_{\gamma\in\Phi_S}\{B_{\mathcal{C}_{\gamma,M},2}\}\]
from Theorem \ref{thm:Mordell}. From here, the proof that $n$ is bounded is (mutatis mutandis) the same as the proof of \cite[Theorem 1.1]{Riccati} and similar to the proof of Theorem \ref{thm:primdiv} above. Specifically, we consider the algebraic point  
\[Q_{n,\gamma,P}=\Big((\psi_{m_\psi}(\gamma)\circ\gamma_{n-M})(P),\sqrt{(f_{\gamma,M,1}\circ(\psi_{m_\psi}(\gamma)\circ\gamma_{n-M}))(P)}\,\Big)\in \mathcal{C}_{\gamma,M}(\overline{K}),\] 
so that Theorem \ref{thm:Mordell} implies that 
\begin{equation*}\label{globalbd3} 
h_{\kappa_{\mathcal{C}_{\gamma,M}}}(Q_{n,\gamma,P})\leq \mathcal{B}_{S,1}\,d(Q_{n,\gamma,P})+\mathcal{B}_{S,2}. 
\end{equation*}  
On the other hand, let $x_{n,\gamma}(P)=X(Q_{n,\gamma,P})$ and $y_{n,\gamma}(P)=Y^2(Q_{n,\gamma,P})$ respectively. Then   
\begin{equation}\label{globalbd4} 
h(x_{n,\gamma}(P))\leq \mathcal{B}_{S,3}\bigg(\sum_{\hspace{.2cm}v(y_{n,\gamma}(P))>0}\hspace{-.6cm}N_v+\sum_{\hspace{.2cm}v(y_{n,\gamma}(P))<0}\hspace{-.4cm}N_v\hspace{.2cm}\bigg)+\mathcal{B}_{S,4};
\end{equation}
here we use \cite[Proposition 3.7.3]{ffields} to calculate $d(Q_{n,\gamma,P})$ and use \cite[Theorem III.10.2]{SilvA} to compare the two heights $h_{\kappa_{\mathcal{C}_{\gamma,M}}}(Q_{n,\gamma,P})$ and $h(x_{n,\gamma}(P))$. However, $S$ is a set of polynomials, so that $v(y_{n,\gamma}(P))<0$ can only occur when $P$, the coefficients of the polynomials in $S$, or the coefficients of $f_{\gamma,M,1}$ have negative valuations. Therefore, we deduce from (\ref{globalbd4}) that 
\begin{equation}\label{globalbd5} 
h(x_{n,\gamma}(P))\leq \mathcal{B}_{S,5}\underset{\hspace{-.4cm}v(y_{n,\gamma}(P))>0}{\sum_{v\in\mathfrak{s}_P}N_v}+\mathcal{B}_{S,6}\,h(P)+\mathcal{B}_{S,7};  
\end{equation}
here $\mathfrak{s}_P$ is the set valuations $v$ satisfying the following: \vspace{.1cm}  
\begin{equation} 
\begin{split} 
\hspace{2cm}&\text{$\bullet$ $v(P)\geq0$,}\\
\hspace{2cm}&\text{$\bullet$ every polynomial $\phi\in S$ has good reduction at $v$},\\ 
\hspace{2cm}&\text{$\bullet$ $f_{\gamma,M,i}$ has good reduction at $v$ or all $i$ and all $\gamma\in\Phi_S$}, \\[2pt] 
\end{split} 
\end{equation} 
Now we use our assumption that $n\in\mathcal{Z}(\gamma,P)$. In particular, if $v\in\mathfrak{s}_P$ and $v(y_{n,\gamma}(P))>0$, then $v(\gamma_n(P))>0$. Therefore, $v(\gamma_m(P))>0$ for some $1\leq m\leq n-1$ by the definition of the Zsigmondy set. However, as in (\ref{abc3}), $v(\gamma_n(P))>0$ and $v(\gamma_m(P))>0$ implies that $v((\gamma_n\mysetminus\gamma_m)(0))>0$.  In particular, we see that (\ref{htff}) and (\ref{globalbd5}) imply that 
\begin{equation}\label{globalbd6} 
\Scale[0.88]{
\begin{split} 
h(x_{n,\gamma}(P))&\leq\mathcal{B}_{S,5}\,\bigg(\underset{v(\gamma_m\cdot P)>0}{\sum_{m=1}^{\lfloor\frac{n}{2}\rfloor}}\hspace{-.3cm}N_v\;\;+\hspace{-.15cm}\underset{v((\gamma_n\mysetminus\gamma_m)\cdot 0)>0}{\sum_{m=\lfloor\frac{n}{2}\rfloor}^{n-1}}\hspace{-.6cm}N_v\;\;\;\bigg)+\mathcal{B}_{S,6}\,h(P)+\mathcal{B}_{S,7}\\[6pt]
&\leq\mathcal{B}_{S,5}\,\bigg(\sum_{m=1}^{\lfloor\frac{n}{2}\rfloor}h(\gamma_m\cdot P)+\sum_{m=\lfloor\frac{n}{2}\rfloor}^{n-1}\hspace{-.2cm}h((\gamma_n\mysetminus\gamma_m)\cdot 0)\;\bigg)+\mathcal{B}_{S,6}\,h(P)+\mathcal{B}_{S,7};
\end{split}} 
\end{equation} 
compare to (\ref{abc4}) over number fields. However, $\big|h(\rho_r\cdot Q)-\deg(\rho_r)\hat{h}_\rho(Q)\big|\leq C_S$ for all $r\geq1$, all $\rho\in\Phi_S$, and all $Q\in\mathbb{P}^1(\overline{K})$, where $C_S$ is the height control constant from Theorem \ref{thm:htsiid}; see Lemma \ref{lem:particularcanonicalheight}. Therefore,  
\begin{equation}\label{globalbd7}
\begin{split} 
\,\deg(\gamma_{n-M})\hat{h}_\gamma(P)\leq&\;\,\mathcal{B}_{S,8}\bigg(\hat{h}_\gamma(P)\sum_{m=1}^{\lfloor\frac{n}{2}\rfloor}\deg(\gamma_m)+\sum_{m=\lfloor\frac{n}{2}\rfloor}^{n-1}\deg(\gamma_n\mysetminus\gamma_m)\hat{h}_{\gamma\mysetminus\gamma_m}(0)\bigg)\\
&\;+\mathcal{B}_{S,9}\,\hat{h}_\gamma(P)+\mathcal{B}_{S,10}\,n+\mathcal{B}_{S,11};
\end{split} 
\end{equation} 
compare this to the estimate (\ref{abc6}) in the number field setting. Here we use also the trivial bound $\deg(\gamma_{n-M})\leq\deg((\psi_{m_\psi}(\gamma)\circ\gamma_{n-M}))$, in an attempt to make the expressions less cumbersome. 

Finally, $\hat{h}_{\gamma\mysetminus\gamma_m}(0)\leq C_S$ for all $m$ by Lemma \ref{lem:particularcanonicalheight}, and we deduce that $n$ is bounded as in (\ref{abc7}) over number fields. Namely, divide both sides of (\ref{globalbd7}) by $\deg(\gamma_n)\hat{h}_\gamma(P)$ and see that the right hand goes to zero as $n$ grows while the left hand side is bounded below by $1/d^M$, where $d=\max\{\deg(\phi)\,:\phi\in S\}$. This completes the proof of Theorem \ref{thm:primdivff}.   
\end{proof}
We conclude this section by noting that there is an absolute (positive) lower bound on the canonical height of all non-preperiodic points of bounded degree, a fact used to establish Lemma \ref{substring}. 
\begin{lemma}\label{lem:htmin} Let $K$ be a global field and let $S=\{\phi_1,\phi_2,\dots,\phi_s\}$ be a set of endomorphisms on $\mathbb{P}^1$ all defined over $K$ and of degree at least $2$. Then for all $D>0$, the quantity 
\[\hat{h}_{\Phi_S,K}^{\min}(D):=\inf\Big\{\,\hat{h}_\psi(\alpha)\,:\; [K(\alpha):K]\leq D,\;\alpha\not\in\PrePer(\Phi_S),\; \psi\in\Phi_S \Big\}\] 
is strictly positive. 
\end{lemma}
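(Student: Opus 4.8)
The plan is to argue by contradiction, combining Northcott's finiteness theorem with a compactness argument on the sequence space $\Phi_S$. Suppose $\hat{h}_{\Phi_S,K}^{\min}(D)=0$. Then there are points $\alpha_n\in\overline{K}$ with $[K(\alpha_n):K]\leq D$ and $\alpha_n\notin\PrePer(\Phi_S)$, together with sequences $\psi^{(n)}\in\Phi_S$, such that $\hat{h}_{\psi^{(n)}}(\alpha_n)\to0$. Since $S$ is finite it is height controlled, so Lemma \ref{lem:particularcanonicalheight} supplies a uniform constant $C_S$ with $h(\alpha_n)\leq\hat{h}_{\psi^{(n)}}(\alpha_n)+C_S$; hence the $\alpha_n$ have bounded height and bounded degree over the global field $K$. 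By Northcott's theorem (valid over global function fields since their constant fields are finite) there are only finitely many such points, so after passing to a subsequence we may assume $\alpha_n=\alpha$ is a single point with $[K(\alpha):K]\leq D$, $\alpha\notin\PrePer(\Phi_S)$, and $\hat{h}_{\psi^{(n)}}(\alpha)\to0$.

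Next I would show that the function $g\colon\Phi_S\to\mathbb{R}$, $g(\gamma)=\hat{h}_\gamma(\alpha)$, is continuous. Give $\Phi_S=\prod_{i\geq1}S$ the product topology; since $S$ is finite, $\Phi_S$ is compact, and for each $N\geq1$ the map $g_N(\gamma):=h(\gamma_N\cdot\alpha)/\deg(\gamma_N)$ depends only on the first $N$ coordinates of $\gamma$, hence is locally constant and continuous. Applying Lemma \ref{lem:particularcanonicalheight} to the shifted sequence $\gamma\mysetminus\gamma_N$ and the basepoint $\gamma_N\cdot\alpha$, together with the identity $\hat{h}_{\gamma\mysetminus\gamma_N}(\gamma_N\cdot\alpha)=\deg(\gamma_N)\,\hat{h}_\gamma(\alpha)$ from Remark \ref{rem:ht}, yields
\[\big|g(\gamma)-g_N(\gamma)\big|=\frac{1}{\deg(\gamma_N)}\,\big|\hat{h}_{\gamma\mysetminus\gamma_N}(\gamma_N\cdot\alpha)-h(\gamma_N\cdot\alpha)\big|\leq\frac{C_S}{\deg(\gamma_N)}\leq\frac{C_S}{2^{N}},\]
uniformly in $\gamma$, because every element of $S$ has degree at least $2$. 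Thus $g$ is a uniform limit of continuous functions, hence continuous.

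Finally, a continuous function on the compact space $\Phi_S$ attains its infimum, say at $\psi^*\in\Phi_S$. Each $g_N$, hence also $g$, is non-negative (the Weil height on $\mathbb{P}^1$ is non-negative), and $g(\psi^{(n)})\to0$, so $g(\psi^*)=\inf_{\gamma}g(\gamma)=0$; that is, $\hat{h}_{\psi^*}(\alpha)=0$. By the definition of $\PrePer(\Phi_S)$ this forces $\alpha\in\PrePer(\Phi_S)$, contradicting the choice of $\alpha$. Therefore $\hat{h}_{\Phi_S,K}^{\min}(D)>0$.

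The main obstacle — or rather the key idea — is the continuity statement in the middle paragraph: although $\Phi_S$ is only a compact totally disconnected space, the dynamically defined quantity $\gamma\mapsto\hat{h}_\gamma(\alpha)$ varies continuously, and one needs the explicit uniform rate $C_S/2^{N}$ (which is exactly Lemma \ref{lem:particularcanonicalheight} combined with the shift identity) to see this. Once continuity and compactness are in place, the implication ``a small value of $\hat{h}_\gamma(\alpha)$ for some $\gamma$ forces a zero value for some $\gamma$'' is automatic, and this is what excludes $\alpha\notin\PrePer(\Phi_S)$. One should also note that in the first paragraph it is essential to bound the \emph{degree} of the $\alpha_n$, not merely their height, so that Northcott's theorem applies over global function fields.
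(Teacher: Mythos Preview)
Your proof is correct and follows essentially the same route as the paper's: reduce via Northcott to a single point $\alpha$, establish that $\gamma\mapsto\hat{h}_\gamma(\alpha)$ is continuous on the compact space $\Phi_S$ using the uniform bound of Lemma~\ref{lem:particularcanonicalheight} together with the shift identity of Remark~\ref{rem:ht}, and then conclude by compactness that the infimum is attained, forcing a contradiction with $\alpha\notin\PrePer(\Phi_S)$. The only cosmetic differences are that the paper argues directly (bounding the infimum by a minimum over a finite set $T_D$) rather than by contradiction, and phrases continuity as a Lipschitz estimate with respect to the metric $\Delta(\gamma,\psi)=2^{-\min\{n:\gamma_n\neq\psi_n\}}$ rather than as a uniform limit of the locally constant functions $g_N$; these formulations are equivalent.
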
 
\begin{proof} 
Choose any $Q_0\in\mathbb{P}^1$ such that $[K(Q_0):K]\leq D$ and $Q_0\not\in\PrePer(\Phi_S)$; this is possible by Northcott's Theorem and Lemma \ref{lem:particularcanonicalheight}. Then for any $\rho\in\Phi_S$, we have that 
\begin{equation}\label{estimate1}\Scale[0.9]{ 
\hat{h}_{\Phi_S,K}^{\min}(D)=\inf\Big\{\,\hat{h}_\psi(\alpha)\,:\;\hat{h}_\psi(\alpha)<\hat{h}_\rho(Q_0),\,\; [K(\alpha):K]\leq D,\;\alpha\not\in\PrePer(\Phi_S),\; \psi\in\Phi_S \Big\}}.
\end{equation} 
On the other hand, if $\hat{h}_\psi(\alpha)<\hat{h}_\rho(Q_0)$, then $h(\alpha)<\hat{h}_\rho(Q_0)+C_S$ by Lemma \ref{lem:particularcanonicalheight}. Moreover, the set of points
\begin{equation}\label{estimate2}\Scale[0.92]{ 
T_D:=\Big\{\alpha\in\mathbb{P}^1(\overline{K})\,:\,[K(\alpha):K]\leq D,\;h(\alpha)<\hat{h}_\rho(Q_0)+C_S,\;\alpha\not\in\PrePer(\Phi_S) \Big\}}
\end{equation}  
is finite by Northcott's theorem. In particular, since  
\[\hat{h}_{\Phi_S,K}^{\min}(D)\geq \min_{\alpha\in T_D}\inf_{\psi\in\Phi_S}\{\hat{h}_\psi(\alpha)\}\] 
by (\ref{estimate1}) and (\ref{estimate2}) above and $T_D$ is finite, it suffices to show that $\inf_{\psi\in\Phi_S}\{\hat{h}_\psi(\alpha)\}$ is strictly positive for any non-preperiodic $\alpha$ to prove that $\hat{h}_{\Phi_S,K}^{\min}(D)>0$: the minimum value of a finite set of positive numbers is positive. To do this, we note first that \vspace{.1cm} 
\[\hspace{3.5cm}\big|\hat{h}_Q(\gamma)-\hat{h}_Q(\psi)\big|\leq C_S\,\Delta(\gamma,\psi)\;\;\;\;\;\text{for all $Q\in\mathbb{P}^1(\overline{K})$ and all $\gamma,\psi\in\Phi_S$}, \vspace{.1cm}\] 
where $\Delta(\gamma,\psi)=2^{-\min\{n\,:\,\gamma_n\neq\psi_n\}}$ gives a metric on $\Phi_S$; this follows easily from Lemma $\ref{lem:particularcanonicalheight}$ and Remark \ref{rem:ht} above. In particular, for any fixed $\alpha$ the canonical height map, $\hat{h}_\alpha:\Phi_S\rightarrow\mathbb{R}_{\geq0}$ given by $\gamma\rightarrow\hat{h}_\gamma(\alpha)$, is continuous; in fact, $\hat{h}_\alpha$ is uniformly continuous. In particular, since $\Phi_S$ is compact ($S$ is finite), $\hat{h}_\alpha$ must attain its minimum value somewhere on $\Phi_S$. In particular, this minimum value must be positive when $\alpha\not\in\PrePer(\Phi_S)$ by definition.       
\end{proof} 
\begin{remark} In fact, for non-preperiodic basepoints, we have established uniform bounds on Zsigmondy sets: assuming the hypothesis of Theorem's \ref{thm:primdiv} and \ref{thm:primdivff}, there exists an integer $N=N(S,K)$ such that if $n\in\mathcal{Z}(\gamma,P)$ for some $(\gamma,P)\in G_S$ with $P\not\in\PrePer(\Phi_S)$, then $n\leq N$. This follows from (\ref{abc6}), (\ref{globalbd7}), and Lemma \ref{lem:htmin}. 
\end{remark} 
\section{Local Height Decomposition}{\label{sec:local}}
As with the usual canonical height (associated to a single endomorphism), the canonical and expected canonical heights defined in Theorem \ref{thm:htsiid} can be written as a sum of local heights. To do this, we follow the construction given in \cite[\S5.9]{SilvDyn}. In particular, we assume that $V=\mathbb{P}^1$ and that our generating set of self maps $S=\{\phi_1,\phi_2,\dots, \phi_s\}$ is finite. Moreover, since most of the results in this section are straightforward adaptations of Lemma \ref{lem:uniformbd}, Lemma \ref{lem:particularcanonicalheight}, and results in \cite[\S5.9]{SilvDyn}, we simply provide an outline here.  

First some notation. Let $M_K$ be a complete set of absolute values on $K$. To each place $|\cdot|_v\in M_K$, we define a norm $\lVert\cdot\rVert_v$ on $\mathbb{A}^2(K_v)$ given by 
\[\lVert(x,y)\rVert_v=\max\{|x|_v,|y|_v\},\]
where $K_v$ is the completion of $K$ at $|\cdot|_v$; let $n_v$ be its local degree \cite[\S3.1]{SilvDyn}. Now choose and fix a lift $\tilde{\phi}_i:\mathbb{A}^2\rightarrow\mathbb{A}^2$ over $K$ for each $\phi_i\in S$, and associate a ``lift" to a sequence $\gamma=(\theta_n)_{n\geq1}\in\Phi_S$ by $\tilde{\gamma}=(\tilde{\theta}_n)_{n\geq1}$. Then we define an associated \emph{Green function} to $\tilde{\gamma}$ and $|\cdot|_v$ by analogy with the canonical height: 
\begin{equation}{\label{Green}} 
\mathcal{G}_{v,\tilde{\gamma}}(x,y)=\lim_{n\rightarrow\infty}\frac{\log\,\lVert\tilde{\gamma}_n(x,y)\rVert_v}{\deg(\tilde{\gamma}_n)},\;\;\;\;\; (x,y)\in\mathbb{A}^2_*(K_v);
\end{equation} 
here $\mathbb{A}^2_*(K_v)=\{(x,y)\in\mathbb{A}^2(K_v)\,: x\neq0\;\;\text{or}\;\;y\neq0\}$.     
We collect some facts about these Green functions below; compare to \cite[Proposition 5.58 and Theorem 5.59]{SilvDyn}. 
\begin{proposition}{\label{prop:local1}} The functions $\mathcal{G}_{v,\tilde{\gamma}}$ are well defined and satisfy the following properties: 
\begin{enumerate}[topsep=8pt, partopsep=6pt, itemsep=10pt] 
\item[\textup{(1)}] Let $(x,y)\in\mathbb{A}^2_*(K_v)$. Then \[\mathcal{G}_{v,\tilde{\gamma}}(x,y)=\log\,\lVert(x,y)\rVert_v+O_v(1)\\[3pt]\]  
for all $v\in M_K$. Moreover, $O_v(1)=0$ for all but finitely many $v\in M_K$.    
\item[\textup{(2)}] Let $(x,y)\in\mathbb{A}^2_*(K)$ be any representative of $P=[x,y]\in\mathbb{P}^1(K)$. Then 
\[\hat{h}_\gamma(P)=\sum_{v\in M_K} n_v\, \mathcal{G}_{v,\tilde{\gamma}}(x,y)\] 
for any choice of lifts (over $K$) of the elements of $S$.   
\end{enumerate} 
\end{proposition}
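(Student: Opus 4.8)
The plan is to mirror the construction of the global canonical height, but at the level of the lifted maps $\tilde\gamma_n$ on $\mathbb{A}^2$. First I would establish the analog of Lemma \ref{lem:uniformbd} in this local setting. Since each $\tilde\phi_i$ is a pair of homogeneous polynomials of degree $d_i=\deg(\phi_i)$ with coefficients in $K$, there is a standard estimate (see \cite[Proposition 5.58]{SilvDyn}) of the form
\[
\bigl|\log\lVert\tilde\phi_i(x,y)\rVert_v - d_i\log\lVert(x,y)\rVert_v\bigr|\le C_{i,v}
\]
for all $(x,y)\in\mathbb{A}^2_*(K_v)$, where $C_{i,v}$ depends only on the coefficients of $\tilde\phi_i$ and on $v$, and where $C_{i,v}=0$ for all but finitely many $v$ (namely those $v$ at which all coefficients of $\tilde\phi_i$ are $v$-integral and at least one of the leading-type coefficients is a $v$-unit — the standard ``good reduction'' condition). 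Setting $C_v:=\max_i C_{i,v}$ (finite since $S$ is finite) and $\alpha:=\min_i d_i\ge 2>1$, the exact same telescoping computation as in \eqref{Tate}, applied to the partial compositions $\tilde\gamma_i=\tilde\theta_i\circ\cdots\circ\tilde\theta_1$, gives
\[
\biggl|\frac{\log\lVert\tilde\gamma_n(x,y)\rVert_v}{\deg(\tilde\gamma_n)}-\log\lVert(x,y)\rVert_v\biggr|\le\frac{C_v}{\alpha-1},
\]
uniformly in $n$ and in $(x,y)\in\mathbb{A}^2_*(K_v)$. A Cauchy-sequence argument identical to the proof of Lemma \ref{lem:particularcanonicalheight} (replacing $h_{V,\eta}$ by $\log\lVert\cdot\rVert_v$ and $\deg_\eta$ by $\deg$) then shows that the limit \eqref{Green} defining $\mathcal{G}_{v,\tilde\gamma}$ exists, and passing $n\to\infty$ in the displayed bound yields property (1) with $O_v(1)$ bounded by $C_v/(\alpha-1)$; in particular $O_v(1)=0$ whenever $C_v=0$, which holds for all but finitely many $v$.

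For property (2) I would use the fact that $\mathcal{G}_{v,\tilde\gamma}$ is homogeneous of degree $1$ in a precise sense: scaling $(x,y)\mapsto(\lambda x,\lambda y)$ changes $\log\lVert\tilde\gamma_n(x,y)\rVert_v$ by $\deg(\tilde\gamma_n)\log|\lambda|_v$ (because $\tilde\gamma_n$ is homogeneous of degree $\deg(\tilde\gamma_n)$), so after dividing and taking the limit, $\mathcal{G}_{v,\tilde\gamma}(\lambda x,\lambda y)=\mathcal{G}_{v,\tilde\gamma}(x,y)+\log|\lambda|_v$. Now fix a representative $(x,y)\in\mathbb{A}^2_*(K)$ of $P=[x,y]$. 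By the definition of the Weil height on $\mathbb{P}^1$ via the product formula, $h([x,y])=\sum_{v\in M_K}n_v\log\lVert(x,y)\rVert_v$ (the choice of representative being immaterial by the product formula $\sum_v n_v\log|\lambda|_v=0$). More generally, writing $\tilde\gamma_n(x,y)$ as an explicit representative of $\gamma_n\cdot P\in\mathbb{P}^1(K)$, we get $h(\gamma_n\cdot P)=\sum_v n_v\log\lVert\tilde\gamma_n(x,y)\rVert_v$. Dividing by $\deg(\tilde\gamma_n)=\deg_\eta(\gamma_n)$ and taking $n\to\infty$, the left side converges to $\hat h_\gamma(P)$ by Lemma \ref{lem:particularcanonicalheight}, and the right side converges termwise to $\sum_v n_v\mathcal{G}_{v,\tilde\gamma}(x,y)$; the interchange of limit and (finite!) sum is legitimate because all but finitely many terms vanish identically in $n$ (at $v$ with $C_v=0$ and $(x,y)$ a $v$-unit vector, which is all but finitely many $v$ for a fixed $(x,y)$) and the remaining finitely many terms converge by property (1). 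Independence of the choice of lifts follows since any two lifts of $\phi_i$ differ by a scalar in $K^*$, which by homogeneity shifts each local Green function by a term that sums to zero over $M_K$ by the product formula.

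The main obstacle — though it is more bookkeeping than genuine difficulty — is the uniformity in part (1): one must check carefully that the bound $C_v$ on the local error is zero for all but finitely many $v$ and that this holds simultaneously for all $(x,y)$ once one restricts to $(x,y)$ with $\lVert(x,y)\rVert_v=1$ (a primitive integral representative). This is exactly the input needed to justify interchanging the infinite sum over $M_K$ with the limit $n\to\infty$ in part (2), so the two parts are really proved together. The non-archimedean estimate requires the observation that for $v$ at which $\tilde\phi_i$ has good reduction, $\lVert\tilde\phi_i(x,y)\rVert_v=\lVert(x,y)\rVert_v^{d_i}$ exactly (not merely up to a bounded factor) when $\lVert(x,y)\rVert_v=1$; the archimedean and bad-reduction places contribute the finitely many nonzero $O_v(1)$ terms. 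Everything else is a direct transcription of the single-map arguments in \cite[\S5.9]{SilvDyn} and of Lemmas \ref{lem:uniformbd} and \ref{lem:particularcanonicalheight}.
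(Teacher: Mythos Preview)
Your proposal is correct and follows essentially the same approach as the paper: both use the single-map local estimate from \cite[\S5.9]{SilvDyn} (the paper cites Proposition~5.57 rather than 5.58), feed it into the telescoping argument of Lemma~\ref{lem:uniformbd}/\ref{lem:particularcanonicalheight} to get convergence and the $O_v(1)$ bound, invoke good reduction to make $C_v=0$ at almost all places, and then justify the limit--sum interchange in part~(2) by observing that for fixed $(x,y)$ only finitely many terms are nonzero. Your remark on independence of the choice of lifts via the product formula is a small addition not spelled out in the paper's proof but implicit in its statement.
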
 
\begin{proof} The argument that the limit defining $\mathcal{G}_{v,\tilde{\gamma}}$ exists is, mutatis mutandis, the same as that given to establish that $\hat{h}_\gamma$ is well defined: one uses Tate's telescoping argument plus the fact that $S$ is finite (and thus height controlled). In particular, the key fact in this setting is that there exist positive constants $c_{i,v}$ such that   
\begin{equation}{\label{localbd}}
\bigg|\log\,\lVert\tilde{\phi}_i(x,y)\rVert_v-\deg(\phi_i)\log\,\lVert(x,y)\rVert_v \bigg|\leq c_{i,v}\;\;\;\; \text{for all $(x,y)\in\mathbb{A}^2_*(K_v)$};
\end{equation} 
see \cite[Proposition 5.57(a)]{SilvDyn}. Hence, (\ref{localbd}) and the adapted version of Tate's telescoping argument given in (\ref{Tate}) and (\ref{particularcanonicalheight}) imply that \vspace{.1cm}  
\begin{equation}{\label{localbd2}}
\Bigg|\,\frac{\log\,\lVert\tilde{\gamma}_n(x,y)\rVert_v}{\deg(\tilde{\gamma}_n)}-\frac{\log\,\lVert\tilde{\gamma}_m(x,y)\rVert_v}{\deg(\tilde{\gamma}_m)}\,\Bigg|\leq \frac{\max_i\{c_{i,v}\}}{\alpha^m(\alpha-1)},\;\;\;\;\; \text{$0<m<n$;} \vspace{.05cm}
\end{equation} 
here $\alpha=\min\{\deg(\phi_i)\}\geq2$. Hence, the limit defining $\mathcal{G}_{v,\tilde{\gamma}}(x,y)$ is Cauchy and therefore converges. Moreover, letting $m=0$ and $n\rightarrow\infty$ establishes statement (1). On the other hand, if $v\in M_K$ satisfies:
\begin{equation}{\label{goodprimes}} 
\text{$v$ is nonarchimedean, $\lVert \tilde{\phi}_i\rVert_v=1$, and $|\Res(\tilde{\phi}_i)|_v=1$},
\end{equation} 
then $c_{i,v}=0$; see \cite[Proposition 5.57(b)]{SilvDyn}. Hence, for all but finitely $v\in M_K$, we have that $\mathcal{G}_{v,\tilde{\gamma}}(x,y)=\log\,\lVert(x,y)\rVert_v$ for all $(x,y)\in\mathbb{A}^2_*(K_v)$; to see this, note that (\ref{goodprimes}) holds for all $i$ for all but finitely many places (recall that we fix the lifts $\tilde{\phi}_i$ at the outset). This completes the proof of statement (1).  As for statement (2), we note first that if $c\in K_v^*$, then 
\begin{equation} \label{scale}
\mathcal{G}_{v,\tilde{\gamma}}(cx,cy)=\mathcal{G}_{v,\tilde{\gamma}}(x,y)+\log|c|_v;
\end{equation} 
the proof is identical to \cite[(5.38) pp.289]{SilvDyn}. In particular, if $c\in K^*$, then 
\begin{equation} \label{welldefined}
\;\;\;\;\sum_{v\in M_K} n_v\mathcal{G}_{v,\tilde{\gamma}}(cx,cy)=\sum_{v\in M_k}n_v\mathcal{G}_{v,\tilde{\gamma}}(x,y)\;\;\;\text{for all \,$(x,y)\in\mathbb{A}^2_*(K)$},
\end{equation} 
by the product formula \cite[Proposition 3.3]{SilvDyn}. Hence, the right hand side of statement (2) of Proposition \ref{prop:local1} is independent of the choice of representative for $P=[x,y]$. Now fix $P\in\mathbb{P}^1(K)$ and a representative $(x,y)$, and note that the sum in (\ref{welldefined}) is a finite sum, since $\mathcal{G}_{v,\tilde{\gamma}}(x,y)=\log\,\lVert(x,y)\rVert_v=0$ for all but finitely many $v\in M_K$; of course this time the finitely many primes depend on $(x,y)$. In fact, the stronger statement holds: there exists a finite set $T_{\tilde{\gamma},(x,y)}\subset M_K$, depending on $\tilde{\gamma}$ and $(x,y)$, such that for all $|\cdot|_v\not\in T_{\tilde{\gamma},(x,y)}$,  
\[\mathcal{G}_{n,v,\tilde{\gamma}}(x,y):=\frac{\log\,\lVert\tilde{\gamma}_n(x,y)\rVert_v}{\deg(\tilde{\gamma}_n)}=\log\,\lVert(x,y)\rVert_v=0\;\;\;\text{for all $n\geq1$};\]
see (\ref{localbd}) and (\ref{goodprimes}) above. Hence, 
\begin{equation*}
\begin{split} 
\sum_{v\in M_K} n_v\, \mathcal{G}_{v,\tilde{\gamma}}(x,y)&=\sum_{\mathclap{v\in T_{\tilde{\gamma},(x,y)}}} n_v\, \mathcal{G}_{v,\tilde{\gamma}}(x,y)=\sum_{\mathclap{v\in T_{\tilde{\gamma},(x,y)}}} n_v\, \lim_{n\rightarrow\infty}\mathcal{G}_{n,v,\tilde{\gamma}}(x,y)\\[6pt]
&=\lim_{n\rightarrow\infty}\,\sum_{\mathclap{v\in T_{\tilde{\gamma},(x,y)}}} n_v\,\mathcal{G}_{n,v,\tilde{\gamma}}(x,y)=\lim_{n\rightarrow\infty}\,\sum_{v\in M_K} n_v \mathcal{G}_{n,v,\tilde{\gamma}}(x,y)\\[6pt]
&= \lim_{n\rightarrow\infty}\frac{\sum_{v\in M_K}n_v\log\,\lVert\tilde{\gamma}_n(x,y)\rVert_v}{\deg(\tilde{\gamma}_n)}=\lim_{n\rightarrow\infty}\frac{h(\gamma_n\cdot P)}{\deg(\tilde{\gamma}_n)}\\[2pt]
&=\hat{h}_\gamma(P)
\end{split} 
\end{equation*} 
as claimed. This completes the proof of Proposition \ref{prop:local1}.   
\end{proof}  
\begin{remark} We note that $\mathcal{G}_{v,\tilde{\gamma}}: \mathbb{A}^2_*(K)\rightarrow\mathbb{R}$ is continuous, since the sequence of continuous functions $\mathcal{G}_{n,v,\tilde{\gamma}}$ converges uniformly to $\mathcal{G}_{v,\tilde{\gamma}}$ by (\ref{localbd2});  compare to \cite[Proposition 5.58(e)]{SilvDyn}.     
\end{remark} 
Following \cite[\S5.9]{SilvDyn}, we use Green functions to define local canonical heights, which in some sense measure the the $v$-adic distance from points to divisors. Moreover, these functions are defined on Zariski open subsets of $\mathbb{P}^1$ (the ambient space), unlike the Green functions.   

To do this, let $E\in K[x,y]$ be a homogenous polynomial of degree $\deg(E)=e$ (which determines a divisor of $\mathbb{P}^1$). For a lift $\tilde{\gamma}$ of $\gamma\in \Phi_S$, determined by fixed lifts of the elements of $S$, we define the \emph{local canonical height} at $v$ associated to the pair $(\tilde{\gamma},E)$ to be the function:  
\begin{equation}\label{localht} 
\hat{\lambda}_{v,\tilde{\gamma}, E}([x,y]):= e \mathcal{G}_{v,\tilde{\gamma}}(x,y)-\log\, |E(x,y)|_v 
\end{equation}   
for all $[x,y]\in \mathbb{P}^1(K_v)$ with $E(x,y)\neq0$. We collect some properties of these local canonical height functions below; compare to \cite[Theorems 5.60, 5.61]{SilvDyn}. 
\begin{theorem}\label{thm:localhts} Let $E\in K[x,y]$ be a homogenous polynomial of degree $e$ and let $\tilde{\gamma}$ be a lift of $\gamma\in \Phi_S$. Then the following statements hold:  
\begin{enumerate}[topsep=8pt, partopsep=6pt, itemsep=10pt] 
\item[\textup{(1)}] $\hat{\lambda}_{v,\tilde{\gamma}, E}: \mathbb{P}^1(K_v)\mysetminus\{E=0\}\rightarrow\mathbb{R}$ is a well defined function.   
\item[\textup{(2)}] The function $P\rightarrow\hat{\lambda}_{v,\tilde{\gamma}, E}(P)+\log\frac{|E(P)|_v}{\lVert P\rVert_v^e}$ extends to a bounded continuous function on all of $\mathbb{P}^1(K_v)$.
\item[\textup{(3)}] The canonical height has a decomposition as a sum of local canonical heights: 
\[\hat{h}_\gamma(P)=\frac{1}{\deg(E)}\,\sum_{\mathclap{v\in M_K}}n_v\,\hat{\lambda}_{v,\tilde{\gamma}, E}(P)\]
for all $P\in\mathbb{P}^1(K)\mysetminus\{E=0\}$. 
\end{enumerate} 
\end{theorem}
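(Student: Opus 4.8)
The plan is to prove the three statements in order, drawing in each case on the corresponding facts for the Green functions $\mathcal{G}_{v,\tilde\gamma}$ established in Proposition \ref{prop:local1}. For statement (1), the only thing to check is that the defining formula $\hat\lambda_{v,\tilde\gamma,E}([x,y]) = e\,\mathcal{G}_{v,\tilde\gamma}(x,y) - \log|E(x,y)|_v$ does not depend on the choice of homogeneous representative $(x,y)$ of the point $[x,y]$. This is immediate from the scaling identity $\mathcal{G}_{v,\tilde\gamma}(cx,cy) = \mathcal{G}_{v,\tilde\gamma}(x,y) + \log|c|_v$ in (\ref{scale}) together with the homogeneity $E(cx,cy) = c^e E(x,y)$, which together give $e\log|c|_v - \log|c^e|_v = 0$; since $E(x,y)\ne 0$ exactly when $E(cx,cy)\ne 0$, the domain $\mathbb{P}^1(K_v)\smallsetminus\{E=0\}$ is well defined as well. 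So statement (1) is essentially bookkeeping.

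For statement (2), normalize the representative of $P$ so that $\lVert P\rVert_v = 1$ (possible locally over $K_v$ since we need not respect the product formula here). Then the function in question becomes $e\,\mathcal{G}_{v,\tilde\gamma}(x,y) - \log|E(x,y)|_v + \log|E(x,y)|_v - e\log\lVert(x,y)\rVert_v = e\big(\mathcal{G}_{v,\tilde\gamma}(x,y) - \log\lVert(x,y)\rVert_v\big)$, which by part (1) of Proposition \ref{prop:local1} equals $e\cdot O_v(1)$, a bounded quantity. More precisely, one should write $\hat\lambda_{v,\tilde\gamma,E}(P) + \log\frac{|E(P)|_v}{\lVert P\rVert_v^e} = e\big(\mathcal{G}_{v,\tilde\gamma}(x,y) - \log\lVert(x,y)\rVert_v\big)$ for any representative $(x,y)$ of $P$ — both sides are representative-independent — and then invoke that $\mathcal{G}_{v,\tilde\gamma} - \log\lVert\cdot\rVert_v$ extends to a bounded continuous function on $\mathbb{P}^1(K_v)$ (continuity of $\mathcal{G}_{v,\tilde\gamma}$ is the Remark following Proposition \ref{prop:local1}, from the uniform convergence in (\ref{localbd2}); boundedness is part (1) of that Proposition, noting $\mathbb{P}^1(K_v)$ is compact). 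This handles (2).

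For statement (3), fix $P = [x,y]\in\mathbb{P}^1(K)\smallsetminus\{E=0\}$ with a representative $(x,y)\in\mathbb{A}^2_*(K)$. Summing the defining relation over all places,
\[
\sum_{v\in M_K} n_v\,\hat\lambda_{v,\tilde\gamma,E}(P) = e\sum_{v\in M_K} n_v\,\mathcal{G}_{v,\tilde\gamma}(x,y) - \sum_{v\in M_K} n_v\log|E(x,y)|_v.
\]
The second sum on the right vanishes by the product formula \cite[Proposition 3.3]{SilvDyn}, since $E(x,y)\in K^*$ (this is exactly where we use $E(x,y)\ne 0$), and the first sum equals $e\,\hat h_\gamma(P)$ by part (2) of Proposition \ref{prop:local1}; dividing through by $e = \deg(E)$ gives the claimed decomposition. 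One must also note that each of the sums is in fact finite — for all but finitely many $v$, $\mathcal{G}_{v,\tilde\gamma}(x,y) = \log\lVert(x,y)\rVert_v = 0$ by part (1) of Proposition \ref{prop:local1} and $|E(x,y)|_v = \lVert(x,y)\rVert_v^{\deg E}$-type estimates, so everything converges absolutely and the rearrangements are legitimate. I do not anticipate a genuine obstacle here: the whole theorem is a transcription of the single-map arguments in \cite[\S5.9]{SilvDyn} with the non-autonomous Green functions of Proposition \ref{prop:local1} substituted in; the only mild care needed is keeping track of which finite sets of ``bad'' places depend on $\tilde\gamma$ versus on the representative $(x,y)$, exactly as in the proof of Proposition \ref{prop:local1}(2).
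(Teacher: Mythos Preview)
Your proposal is correct and follows essentially the same route as the paper: statement (1) via the scaling identity (\ref{scale}) and the homogeneity of $E$; statement (2) by rewriting the expression as $e\big(\mathcal{G}_{v,\tilde\gamma}-\log\lVert\cdot\rVert_v\big)$ and invoking Proposition \ref{prop:local1}(1) together with the continuity remark; and statement (3) by summing over places, applying Proposition \ref{prop:local1}(2), and killing the $\log|E(x,y)|_v$ sum with the product formula. The only cosmetic difference is that the paper does not bother with the preliminary normalization $\lVert P\rVert_v=1$ in (2), and it does not appeal to compactness of $\mathbb{P}^1(K_v)$ for boundedness (which is unnecessary, since boundedness already follows directly from the $O_v(1)$ in Proposition \ref{prop:local1}(1)).
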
 
\begin{proof} The proof of statement (1) follows directly from (\ref{scale}). As for statement (2), note that 
\[ \hat{\lambda}_{v,\tilde{\gamma}, E}(P)+\log\frac{|E(P)|_v}{\lVert P\rVert_v^e}=e\big(\mathcal{G}_{v,\tilde{\gamma}}(P) - \log\,\lVert P\rVert_v\big)\]
is bounded by Proposition \ref{prop:local1} part (1). Moreover, both $\mathcal{G}_{v,\tilde{\gamma}}$ and $\log\,\lVert \cdot\rVert_v$ are continuous functions on $\mathbb{A}_*^2(K_v)$, and hence their difference is also continuous on $\mathbb{A}_*^2(K_v)$. Furthermore, since this difference is invariant under scaling, the map $P\rightarrow\hat{\lambda}_{v,\tilde{\gamma}, E}(P)+\log\frac{|E(P)|_v}{\lVert P\rVert_v^e}$ is continuous as claimed. Finally, if $P=[x,y]\in\mathbb{P}^1(K)\mysetminus\{E=0\}$, then \vspace{.2cm} 
\begin{equation}\label{localdecomp}
\begin{split} 
\,\hat{h}_\gamma(P)=\sum_{v\in M_k}n_vG_{v,\tilde{\gamma}}(x,y)&=\sum_{v\in M_k}\frac{n_v}{\deg(E)}\Big(\hat{\lambda}_{v,\tilde{\gamma}, E}([x,y]) +\log\, |E(x,y)|_v \Big)\\[8pt]
&=\frac{1}{\deg(E)}\sum_{\mathclap{v\in M_K}}n_v\,\hat{\lambda}_{v,\tilde{\gamma}, E}(P)\; + \frac{1}{\deg(E)}\sum_{\mathclap{v\in M_K}}n_v\, \log\, |E(x,y)|_v \\[8pt] 
&= \frac{1}{\deg(E)}\sum_{\mathclap{v\in M_K}}n_v\,\hat{\lambda}_{v,\tilde{\gamma}, E}(P)
\end{split}
\end{equation}
as claimed; here, we use (\ref{localht}), Proposition \ref{prop:local1} part (2), and the product formula, which implies that $\sum_{v\in M_K}n_v\log\, |E(x,y)|_v$ vanishes.        
\end{proof} 
We now fix the point $P\in\mathbb{P}^1(K)$ and vary its orbit in $\mathbb{P}^1$ by varying $\gamma\in\Phi_S$. In particular, we consider the functions $\gamma\rightarrow\hat{\lambda}_{v,E,P}(\gamma):=\hat{\lambda}_{v,\tilde{\gamma}, E}(P)$ for each $v\in M_K$. As a first observation, note that we may interpret Theorem \ref{thm:localhts} part (3) as a way of writing the random variable $\gamma\rightarrow \hat{h}_P(\gamma)$ as a sum of (local) random variables:   
\[\hat{h}_P(\gamma)=\sum_{v\in M_K}n_v\,\hat{\lambda}_{v,E,P}(\gamma).\]  
Since it is often a useful technique in probability theory to decompose a complicated random variable into a sum of independent random variables, we ask the following question: 
\begin{question} For what points $P\in \mathbb{P}^1(K)$, is $\Big\{\hat{\lambda}_{v,E,P}(\gamma)\Big\}_{v\in M_K}$a collection of independent random variables on $\Phi_S$? 
\end{question} 
Finally, we note that $\mathbb{E}_\nu[\hat{h}]:\mathbb{P}^1\rightarrow\mathbb{R}$, a height function that in some sense packages together the collective dynamics of the functions in $S$ at the point $P$, also has a decomposition into a sum of local pieces. 
\begin{corollary}{\label{cor:localhts}} Let $K$ be a global field, let $E\in K[x,y]$ be a homogenous polynomial, and let $S=\{\phi_1,\phi_2,\dots,\phi_s\}$ be a finite set of endomorphisms (over $K$) on $\mathbb{P}^1$, all of degree at least $2$. Fix lifts $\tilde{\phi}_i:\mathbb{A}^2\rightarrow\mathbb{A}^2$ for each $\phi_i$, and extend these to lifts of each element in $\Phi_S$. Then 
\begin{enumerate}[topsep=8pt, partopsep=6pt, itemsep=10pt] 
\item[\textup{(1)}] For all $v\in M_K$, the local expected canonical height, 
\[\mathbb{E}_\nu\big[\hat{\lambda}_{v,\tilde{\gamma},E}\big](P):=\int_{\Phi_S}\hat{\lambda}_{v,E,P}(\gamma)\,d\nu,\] 
is well defined for all $P\in\mathbb{P}^1(K_v)\mysetminus\{E=0\}$.   
\item[\textup{(2)}] The expected canonical height has a decomposition as a sum local expected canonical heights: 
\[ \mathbb{E}_\nu\big[\hat{h}\big](P)=\frac{1}{\deg(E)}\sum_{v\in M_K}n_v\,\mathbb{E}_\nu\big[\hat{\lambda}_{v,\tilde{\gamma},E}\big](P)\]
for all $P\in \mathbb{P}^1(K)\mysetminus\{E=0\}$.
\end{enumerate}    
\end{corollary}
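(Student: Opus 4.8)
The plan is to obtain Corollary~\ref{cor:localhts} by integrating the local decomposition of Theorem~\ref{thm:localhts}(3) over the probability space $(\Phi_S,\mathcal{F},\nu)$, exactly as $\mathbb{E}_\nu[\hat h]$ was produced from $\hat h_\gamma$ in the proof of Theorem~\ref{thm:htsiid}.

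For part~(1) I would fix a representative $(x,y)\in\mathbb{A}^2_*(K_v)$ of $P$; since $P\notin\{E=0\}$, for each $\gamma$ the value $\hat\lambda_{v,\tilde\gamma,E}(P)$ is independent of this choice by Theorem~\ref{thm:localhts}(1), and by (\ref{localht}) it differs from $e\,\mathcal{G}_{v,\tilde\gamma}(x,y)$ by the $\gamma$-independent finite constant $-\log|E(x,y)|_v$. So it suffices to check that $\gamma\mapsto\mathcal{G}_{v,\tilde\gamma}(x,y)$ is $\nu$-integrable. Measurability is clear: each truncation $\gamma\mapsto\mathcal{G}_{n,v,\tilde\gamma}(x,y)=\log\lVert\tilde\gamma_n(x,y)\rVert_v/\deg(\tilde\gamma_n)$ depends on $\gamma$ only through its first $n$ coordinates $\gamma_n$, hence is a (continuous) cylinder function on $\Phi_S$, and by (\ref{Green}) the Green function is their pointwise limit, so it is measurable. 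Boundedness is Proposition~\ref{prop:local1}(1): the $m=0$ case of the telescoping estimate (\ref{localbd2}) gives $\big|\mathcal{G}_{v,\tilde\gamma}(x,y)-\log\lVert(x,y)\rVert_v\big|\le\max_i\{c_{i,v}\}/(\alpha-1)$, a bound independent of $\gamma$. A bounded measurable function on a probability space is integrable, so $\mathbb{E}_\nu[\hat\lambda_{v,\tilde\gamma,E}](P)$ is well defined.

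For part~(2) I would take a $K$-rational representative $(x,y)$ of $P\in\mathbb{P}^1(K)\setminus\{E=0\}$. The crucial point is that the exceptional places can be chosen uniformly in $\gamma$: let $T\subset M_K$ be the finite set consisting of the archimedean places together with those nonarchimedean $v$ for which (\ref{goodprimes}) fails for some $i$, or $\lVert(x,y)\rVert_v\ne1$, or $|E(x,y)|_v\ne1$; this is finite because $S$ and the lifts $\tilde\phi_i$ are fixed once and for all, $x,y\in K$, and $E(x,y)\in K^*$. If $v\notin T$ then $c_{i,v}=0$ for every $i$, so $\log\lVert\tilde\phi_i(u,w)\rVert_v=\deg(\phi_i)\log\lVert(u,w)\rVert_v$ identically on $\mathbb{A}^2_*(K_v)$; iterating this along any lift $\tilde\gamma$ gives $\mathcal{G}_{v,\tilde\gamma}(x,y)=\log\lVert(x,y)\rVert_v=0$, while $\log|E(x,y)|_v=0$, so by (\ref{localht}) we get $\hat\lambda_{v,\tilde\gamma,E}(P)=0$ for every $\gamma\in\Phi_S$. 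Hence the sum in Theorem~\ref{thm:localhts}(3) is a finite sum over $v\in T$, uniformly in $\gamma$, and its interchange with $\int_{\Phi_S}(\cdot)\,d\nu$ is legitimate term by term:
\begin{align*}
\mathbb{E}_\nu\big[\hat h\big](P)
&=\int_{\Phi_S}\hat h_\gamma(P)\,d\nu
=\frac{1}{\deg(E)}\sum_{v\in T}n_v\int_{\Phi_S}\hat\lambda_{v,\tilde\gamma,E}(P)\,d\nu\\
&=\frac{1}{\deg(E)}\sum_{v\in M_K}n_v\,\mathbb{E}_\nu\big[\hat\lambda_{v,\tilde\gamma,E}\big](P),
\end{align*}
the last equality because the omitted terms are integrals of the zero function.

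The only genuine subtlety — and the step I would take most care with — is the uniform-in-$\gamma$ finiteness of the set $T$, which collapses the interchange of summation and integration to the trivial case of a finite sum and thereby avoids any appeal to Fubini or dominated convergence for infinite sums; this rests entirely on $S$ being finite and the lifts $\tilde\phi_i$ being fixed at the outset, so that (\ref{goodprimes}) can fail for only finitely many places in total. The remaining ingredients — measurability of $\mathcal{G}_{v,\tilde\gamma}$ in $\gamma$ via cylinder functions, the uniform bound from Proposition~\ref{prop:local1}(1), and the product-formula cancellation already used in (\ref{localdecomp}) — are routine, and the overall argument parallels the proof of Theorem~\ref{thm:localhts} almost verbatim.
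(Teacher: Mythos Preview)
Your proposal is correct and follows essentially the same approach as the paper: part~(1) via the uniform bound from (\ref{localbd2}) with $m=0$ (the paper phrases this as an application of dominated convergence, but the content is the same), and part~(2) via the observation that the sum over $M_K$ is a finite sum uniformly in $\gamma$ thanks to (\ref{goodprimes}), so the interchange with $\int_{\Phi_S}(\cdot)\,d\nu$ is trivial linearity. Your construction of the uniform exceptional set $T$ is more explicit than the paper's one-line reference to (\ref{goodprimes}), but the underlying idea is identical.
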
 
\begin{proof} The first statement follows from the Lebegue dominated convergence theorem: for each $n$ define the random variables $\lambda_{v,E,P,n}: \Phi_S\rightarrow \mathbb{R}$ given by  
\[\lambda_{v,E,P,n}(\gamma)=\deg(E)\,\frac{\log\lVert\tilde{\gamma}_n(x,y) \rVert_v}{\deg(\tilde{\gamma})}-\log|E(x,y)|_v;\]
here $(x,y)\in\mathbb{A}_*^2(K_v)$ is any representative of $P$. Then (\ref{localbd2}) implies that $\lambda_{v,E,P,n}$ is a sequence of uniformly bounded functions (take $m=0$). Furthermore, $\hat{\lambda}_{v,E,P}$ is the pointwise limit of the $\lambda_{v,E,P,n}$. Hence, \cite[Theorem 9.1]{ProbabilityText} implies that $\hat{\lambda}_{v,E,P}$ is integrable and  
\[ \mathbb{E}_\nu\big[\hat{\lambda}_{v,\tilde{\gamma},E}\big](P):=\int_{\Phi_S}\hat{\lambda}_{v,E,P}(\gamma)\,d\nu=\lim_{n\rightarrow\infty}\int_{\Phi_S}\lambda_{v,E,P,n}(\gamma)\,d\nu.\]
Moreover, the decomposition in statement (2) follows directly from Theorem \ref{thm:localhts} part (3) and the linearity of the integral: for any fixed $P$, the sum is in fact a finite sum; see (\ref{goodprimes}).    
\end{proof} 
 
\end{document}